\newcommand{\cA}{\mathcal{A}}
\newcommand{\cE}{\mathcal{E}}
\newcommand{\cF}{\mathcal{F}}
\newcommand{\cG}{\mathcal{G}}
\newcommand{\cH}{\mathcal{H}}
\newcommand{\cI}{\mathcal{I}}
\newcommand{\cK}{\mathcal{K}}
\newcommand{\cL}{\mathcal{L}}
\newcommand{\cM}{\mathcal{M}}
\newcommand{\cN}{\mathcal{N}}
\newcommand{\cO}{\mathcal{O}}
\newcommand{\cT}{\mathcal{T}}
\newcommand{\coT}{\cT^\vee}
\newcommand{\cV}{\mathcal{V}}
\newcommand{\cExt}{\mathcal{E}xt}
\newcommand{\cEnd}{\mathcal{E}nd}
\newcommand{\dCE}{\delta_{\mathrm{CE}}}
\renewcommand{\d}{\delta}
\newcommand{\ddr}{\mathrm{d}}
\newcommand{\dtw}{\d_{\twist}}
\newcommand{\dpot}{\d_{\Pot}}
\newcommand{\id}{\mathrm{id}}
\newcommand{\cour}[1]{[\![ #1 ]\!]}
\newcommand{\Omcl}{\Omega_{\mathrm{cl}}}
\newcommand{\C}{\mathrm{C}} 
\newcommand{\Pot}{\mathrm{Pot}} 
\newcommand{\Z}{\mathrm{Z}} 
\newcommand{\coH}{\mathrm{H}} 
\newcommand{\rsect}{\Gamma}
\newcommand{\Shuf}{S}
\newcommand{\Cone}{\mathrm{Cone}}
\newcommand{\Hom}{\mathrm{Hom}}
\newcommand{\LA}{\mathsf{LA}}
\newcommand{\MC}{\mathrm{MC}}
\newcommand{\Perf}{\mathrm{Perf}}
\newcommand{\QCoh}{\mathrm{QCoh}}
\newcommand{\SSet}{\mathrm{SSet}}
\newcommand{\Sym}{\mathrm{Sym}}
\newcommand{\Tot}{\mathrm{Tot}}
\newcommand{\TCAlgd}{\mathsf{TCA}}
\newcommand{\TCAConn}{\mathsf{TCA_{conn}}}
\newcommand{\CAlgd}{\mathsf{CA}}
\newcommand{\ExDirac}{\mathsf{ExDir}}
\newcommand{\Dirac}{\mathsf{Dir}}
\newcommand{\TDirac}{\mathsf{TDir}}
\newcommand{\TDiracconn}{\mathsf{TDir_{conn}}}
\newcommand{\symp}{\mathsf{Symp}}
\newcommand{\Symp}{\mathsf{SA}}
\newcommand{\SympIso}{\mathsf{SA}^{\mathsf{iso}}}
\newcommand{\Lag}{\mathsf{Lag}}
\newcommand{\LagIso}{\mathsf{Lag}^{\mathsf{iso}}}
\DeclareMathOperator{\twist}{Tw}
\DeclareMathOperator*{\tens}{\otimes}
\newcommand{\utens}[1]{\underset{#1}{\otimes}}
\newcommand{\lie}[1]{\mathscr{L}_{#1}}
\DeclareMathOperator{\img}{\mathrm{img}}
\DeclareMathOperator{\ad}{\mathrm{ad}}
\newcommand{\CC}{\mathbb{C}}
\newcommand{\RR}{\mathbb{R}}
\newcommand{\KK}{\mathbb{K}}
\newcommand{\red}{\mathrm{red}}
\newcommand{\XL}{{[X/\cL]}}
\newcommand{\XT}{{[X/\cT_X]}}
\newcommand{\UL}{{[U/\cL]}}
\newcommand{\UM}{{[U/\cM]}}
\newcommand{\YM}{{[Y/\cM]}}
\newcommand{\YT}{{[Y/\cT_Y]}}
\newcommand{\VM}{{[V/\cM]}}
\newcommand{\g}{\mathfrak{g}}
\newcommand{\h}{\mathfrak{h}}
\newcommand{\tg}{\widetilde{g}}
\newcommand{\bomega}{\overline{\omega}}
\newcommand{\bbeta}{\overline{\beta}}
\newcommand{\rbrac}[1]{\left(#1\right)}
\newcommand{\abrac}[1]{\left\langle#1\right\rangle}
\newcommand{\set}[2]{\left\{#1\,\middle|\,#2\right\}}
\newcommand{\mapdef}[5]{
	\begin{array}{ccccc}
	#1 &:& #2 &\to& #3 \\
		&&  #4 &\mapsto& #5
	\end{array}
}
\newenvironment{psmallmatrix}
  {\left(\begin{smallmatrix}}
  {\end{smallmatrix}\right)}
\definecolor{tocolor}{rgb}{.1,.1,.1}
\definecolor{urlcolor}{rgb}{.2,.2,.6}
\definecolor{linkcolor}{rgb}{.1,.1,.5}
\definecolor{citecolor}{rgb}{.4,.2,.1}
\definecolor{gray}{rgb}{.8,.8,.8}
\newcommand{\thdef}[2]{
	\newaliascnt{#1}{thm}  
	\newtheorem{#1}[#1]{#2}
	\aliascntresetthe{#1}  
	\newtheorem*{#1*}{#2}
	\expandafter\newcommand\expandafter{\csname #1autorefname\endcsname}{#2}
}
\newtheorem{thm}{Theorem}[section]
\theoremstyle{definition}
\newtheorem*{theorem}{Theorem}
\theoremstyle{remark}
\newcommand{\defterm}[1]{\textbf{\emph{#1}}}
\begin{document}

\title{Shifted symplectic Lie algebroids\vspace{-0.2cm}}

\author{Brent Pym \and Pavel Safronov}
\date{}
\maketitle
\vspace{-0.7cm}
\begin{abstract}
Shifted symplectic Lie and $L_\infty$ algebroids model formal neighbourhoods of manifolds in shifted symplectic stacks, and serve as target spaces for  twisted variants of classical AKSZ topological field theory.  In this paper, we classify zero-, one- and two-shifted symplectic algebroids and their higher gauge symmetries,  in terms of classical geometric  ``higher structures'', such as Courant algebroids twisted by $\Omega^2$-gerbes.  As applications, we produce new examples of twisted Courant algebroids from codimension-two cycles, and we give symplectic interpretations for several well known features of higher structures (such as twists, Pontryagin classes, and tensor products).  The proofs are valid in the $C^\infty$, holomorphic and algebraic settings, and are based on a number of technical results on the homotopy theory of $L_\infty$ algebroids and their differential forms, which may be of independent interest.
\end{abstract}

\setcounter{tocdepth}{2}
{\footnotesize \tableofcontents}

\section{Introduction}

An $L_\infty$ algebroid $\cL$ on a manifold or variety $X$ is a model for the infinitesimal action of a higher groupoid on $X$.  The quotient space $\XL$ is a complicated object (a higher stack), but it has a convenient algebraic description that allows us to avoid working directly with stacks.  For example, the functions on $\XL$ are modelled by the $\cL$-invariant functions on $X$, and this ring has a natural enhancement to a commutative differential graded algebra (cdga) that corresponds to the full cohomology of the structure sheaf of $\XL$---the so-called Chevalley--Eilenberg algebra.  The sheaf of differential forms on $\XL$ has a similar description via an appropriate Weil algebra. This correspondence between $L_\infty$ algebroids and cdgas is often phrased in terms of the NQ manifold $\cL[1]$, but we shall stick to the stacky notation in this paper.

When $\XL$ carries an $n$-symplectic form (i.e.~the corresponding  NQ manifold carries a nondegenerate closed two-form of cohomological degree $n$), it may be used as  the  target space for a classical $(n+1)$-dimensional topological field theory via the AKSZ transgression procedure~\cite{Alexandrov1997}.   Beginning with the works of Roytenberg \cite{Roytenberg2002} and  \v{S}evera \cite{Severa2005}, it was understood that the symplectic forms in question have natural interpretations in terms of differential-geometric ``higher structures''.  They gave a complete classification of such structures for $n\le 2$ by relating them to symplectic structures on degree-shifted cotangent bundles; see \autoref{tab:NQ}. The case $n=3$ was also worked out recently~\cite{Liu2016a}.

It was soon realized that low-dimensional AKSZ theories admit various modifications and twists that do not come directly from symplectic NQ manifolds.  For example, Klim\v{c}\'{i}k and Strobl \cite{Klimcik2002} showed that one can add an extra term to the action of the Poisson sigma model using a closed three-form $H\in\Omega^3(X)$. In this way one arrives at the notion of a twisted Poisson manifold, where the Jacobi identity for the Poisson bracket fails by a term involving $H$. Similarly, Hansen and Strobl \cite{Hansen2010} showed that Courant algebroids and their sigma models can be modified by a closed four-form $K\in\Omega^4(X)$.  In a different direction, Kotov, Schaller and Strobl~\cite{Kotov2005} described a model in which the nondegeneracy condition on the symplectic structure is relaxed, replacing  twisted Poisson manifolds with Dirac manifolds.
 
In this paper, we give a symplectic interpretation for these modified AKSZ theories, based on Pantev, To\"{e}n, Vaqui\'{e} and Vezzosi's notion of shifted symplectic structures in derived algebraic geometry~\cite{Pantev2013}.  Thus we weaken the notions of closure and nondegeneracy for a two-form on $\XL$ in a homotopy-coherent way, so that they become invariant under arbitrary quasi-isomorphisms of $L_\infty$ algebroids (i.e.~independent of the presentation of the quotient stack).  This yields all of the modified target spaces above, and a bit more.

Our main results concern the classification of such shifted symplectic $L_\infty$ algebroids, their gauge symmetries, and their associated Lagrangian subobjects, for shifts up to two.  Since $L_\infty$ algebroids are equivalent to formal moduli problems~\cite{Nuiten2017}, our results could be viewed as normal forms for arbitrary shifted symplectic stacks in the formal neighbourhood of a smooth manifold, analogous to Weinstein's neighbourhood theorems in classical symplectic geometry~\cite{Weinstein1971,Weinstein1981}.

\begin{table}[t]
\caption{Classification of NQ manifolds with degree-$n$ symplectic forms}\label{tab:NQ}
\begin{center}\def\arraystretch{1.2}
\begin{tabular}{cccc}
$n$ & NQ manifold & Differential geometric data & AKSZ field theory \\
\hline
0 & $X$ & $X$ is a symplectic manifold & Classical mechanics\\ 
1 & $\coT_X[1]$ & $X$ is a Poisson manifold & Poisson sigma model\\ 
2 & $M\subset \coT_X[2]\cE[1]$ & $\cE$ is a Courant algebroid~\cite{Liu1997} & Courant sigma model \\
\end{tabular}
\end{center}
\end{table}

To describe the results in more detail, let us begin by recalling that as a higher stack, the quotient $[X/\cL]$ has a tangent complex (rather than a tangent bundle), modelled by the derivations of the Chevalley--Eilenberg algebra. When pulled back along the projection $\pi\colon X \to \XL$, it gives the complex
\[
\pi^*\cT_\XL \cong \rbrac{\xymatrix{
\cdots \ar[r] & \cL_2 \ar[r] & \cL_1 \ar[r] & \cL_0 \ar[r] & \cT_X
}}
\]
where the tangent bundle $\cT_X$ of $X$ sits in degree zero, and the remaining components give the complex of vector bundles underlying the $L_\infty$ algebroid $\cL$.  In particular, the cohomology in degree zero gives the normal spaces to the $\cL$-orbits, or equivalently the Zariski tangent spaces to the quotient.

Adapting the definitions in \cite{Pantev2013} to the present setting, we may weaken the notion of nondegeneracy of a two-form, so that a degree-$n$ symplectic structure corresponds to an $n$-cocycle in the complex $\Omega^{2}_\XL$ that induces a quasi-isomorphism between  $\cT_\XL$ and the shifted cotangent complex $\coT_\XL[n]$, rather than a strict isomorphism.  We may also weaken the closure condition on forms, allowing a $p$-form $\omega_p$ to be closed only up to coherent homotopy
\begin{align}
\delta\omega_p &= 0 \nonumber \\
\ddr \omega_p &= -\delta \omega_{p+1} \label{eqn:htpy-close} \\
\ddr \omega_{p+1} &= -\delta \omega_{p+2}.\nonumber \\
&\ \vdots \nonumber
\end{align}
Here $\ddr$ is the de Rham differential, and $\delta$ is the Chevalley--Eilenberg differential that plays the role of the \v{C}ech differential on $\Omega^\bullet_\XL$. 

In this way, we obtain the notion of an $n$-shifted symplectic structure: a weakly nondegenerate two-form of degree $n$ that is closed up to homotopy.  The notions of isotropic and Lagrangian subobjects, symplectomorphisms, etc.,~are similarly weakened.  In particular, when all of the higher homotopies and symmetries are accounted for, shifted symplectic algebroids naturally form the objects of a whole $\infty$-groupoid, rather than a discrete set.

Thus the cost of having an invariant notion of symplectic structure on $\XL$ is the proliferation of closure and symmetry data, satisfying equations that are typically underdetermined due to the weak notion of nondegeneracy.  One therefore desires stricter models that will compress this complicated information into a small amount of classical differential-geometric data.  In \autoref{sec:defs} and \autoref{sec:quotient-forms}, we establish several basic technical results about $L_\infty$ algebroids and their differential forms that facilitate such strictifications:
\begin{itemize}
\item \autoref{thm:mapping-space} gives an efficient model for the  $\infty$-groupoid of $L_\infty$ algebroid morphisms and their higher homotopies, via Getzler's results~\cite{Getzler2009} on gauge fixing for Maurer--Cartan sets.
\item \autoref{thm:homotopytransfer} extends the Homotopy Transfer Theorem, to show that $L_\infty$ algebroid structures may be transferred along quasi-isomorphisms of complexes of vector bundles.  This eventually allows us to simplify shifted symplectic algebroids by replacing them with quasi-isomorphic models.
\item \autoref{thm:reduced} shows that the complex $\Omega^{\ge p,\bullet}(\XL)$ of homotopy closed $p$-forms retracts onto a much smaller complex that is amenable to computation.  It implies that  the whole sequence~\eqref{eqn:htpy-close} of closure data for a $(p,q)$-form  $\omega_p$ reduces to a single cocycle for $\coH^q(X,\Omega^{\ge p}_X)$, and explains why  ``twists'' of higher structures always occur through expressions such as $\iota_x \iota_y H$, in which vectors are contracted into forms.
\end{itemize}

With these results in hand, we proceed to give strict models for the full $\infty$-groupoids of shifted symplectic $L_\infty$ algebroids, in terms of classical higher structures.  Specializing our results to the level of isomorphism classes, we obtain the following statement.
\begin{theorem}
Let $X$ be a $C^\infty$ manifold, a complex manifold or a smooth algebraic variety.  Then $n$-shifted symplectic $L_\infty$ algebroids on $X$ for $n \le 2$ are determined up to symplectic quasi-isomorphism by the geometric data shown in \autoref{tab:shifted}.
\end{theorem}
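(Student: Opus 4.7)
The plan is to prove the classification case by case for $n = 0, 1, 2$, in each case collapsing an arbitrary $n$-shifted symplectic $L_\infty$ algebroid onto a canonical form via the three technical results already announced in the introduction.

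First, given $(\cL,\omega)$, I would apply the homotopy transfer theorem \autoref{thm:homotopytransfer} to replace $\cL$ by a quasi-isomorphic minimal model in which the underlying complex is as small as weak nondegeneracy of $\omega$ permits. Since $\omega$ induces a quasi-isomorphism $\pi^*\cT_\XL \simeq \pi^*\coT_\XL[n]$, a degree-by-degree count pins down the shape of the model: for $n=0$ all $\cL_i$ vanish and $X$ must be genuinely symplectic; for $n=1$ the model collapses to $\cL_0 \cong \coT_X$ in degree $-1$, with the anchor encoding a bivector on $X$; for $n=2$ the model is forced to be $\cT_X^\vee$ in degree $-2$ together with a self-dual vector bundle in degree $-1$, which is precisely the underlying data of a Courant algebroid in the Liu-Weinstein-Xu sense. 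The induced $L_\infty$ brackets on this minimal complex then recover, respectively, the anchor/bivector and the anchor/pairing/Dorfman bracket.

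Next, I would use \autoref{thm:reduced} to compress the infinite tower of closure conditions \eqref{eqn:htpy-close} into a single cocycle in $\coH^\bullet(X,\Omega^{\ge p}_X)$. This is the step at which the classical \emph{twists} enter the picture: a closed $3$-form gives the $H$-twisted Poisson (or, dropping strict nondegeneracy, Dirac) structures at $n=1$, while a cocycle in $\Omega^{\ge 2}$ realizes the $\Omega^2$-gerbe twist of a Courant algebroid at $n=2$. Substituting the reduced cocycle back into the weakened bracket and closure identities reproduces the contracted $\iota_x\iota_y H$-style defects familiar from Klim\v c\'ik-Strobl and Hansen-Strobl. Finally, to pass from objects to symplectic quasi-isomorphism classes, I would invoke \autoref{thm:mapping-space}: the connected components of the resulting Getzler-gauge-fixed Maurer-Cartan set give exactly the set of symplectic quasi-isomorphism classes, and the induced equivalence relation matches the classical one (e.g.\ $B$-field transforms of Courant algebroids, or gauge transformations of Dirac structures).

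The main obstacle will be the $n=2$ case. After transfer and closure reduction, one has to verify that the residual $L_\infty$ data on the minimal model is controlled \emph{precisely} by the axioms of a (gerby-twisted) Courant algebroid, with no further hidden freedom or missing coherences; in particular, the higher brackets $\ell_k$ on $\cL_0$ must be shown to be determined by the anchor, pairing, Dorfman bracket and gerbe twist through explicit contraction formulas, rather than being independent invariants. Tracking the compatibility between these brackets, the components $\omega_p$ of the symplectic form, and the cocycle representing the gerbe class, is what I expect to be the technical heart of the argument.
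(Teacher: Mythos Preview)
Your proposal has a genuine gap in the ``degree-by-degree count'' step: weak nondegeneracy means that $\omega$ induces a \emph{quasi}-isomorphism $\pi^*\cT_\XL \to \pi^*\coT_\XL[n]$, and passing to a minimal model for $\cL$ does not convert this into a strict isomorphism of bundles. Concretely, for $n=0$ the quasi-isomorphism of $(\cL_0 \to \cT_X)$ with $(\coT_X \to \cL_0^\vee)$ only forces the anchor $a\colon \cL_0 \to \cT_X$ to be injective and $B$ to be nondegenerate on $\cT_X/\cL_0$; it does \emph{not} force $\cL_0 = 0$. The correct output is a regular foliation with a transverse symplectic form, not a symplectic structure on $X$ itself. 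Similarly for $n=1$: the map $\cL_0 \to \coT_X$ induced by $\omega$ need not be an isomorphism, so you do not get $\cL_0 \cong \coT_X$ in general. What you get is a Lagrangian embedding of $\cL_0$ into an exact Courant algebroid, i.e.\ a Dirac structure; the Poisson case you describe is only the special case where the quotient map $X \to \XL$ is itself Lagrangian. You have effectively classified only the rightmost column of \autoref{tab:shifted}.

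The paper's route also differs structurally. Rather than treating the three shifts independently, it proves the $n=2$ case first and then obtains $n=1$ as a corollary by recognizing that a one-shifted symplectic structure on $\XL$ is the same as a Lagrangian structure on the map $\XL \to \XT$ into the trivially two-shifted symplectic stack; the classification of two-shifted Lagrangians (in terms of Dirac pairs) then yields \autoref{tab:shifted} for $n=1$ immediately. For $n=2$, the strictification you anticipate (forcing $\cL_1 = \coT_X$ and $\abrac{-,-}$ nondegenerate on $\cL_0$) is indeed the technical heart, but it is \emph{not} achieved by homotopy transfer to a minimal model. Instead the paper enlarges $\cL$ by a contractible summand $\coT_X \oplus \coT_X[1]$, then shifts the symplectic form by a specific coboundary $\dtw\tau$ built from the identity of $\coT_X$, and only then projects to cohomology; see \autoref{lm:courantstrict}. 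Your plan to ``transfer to a minimal model and read off the Courant data'' would miss this step, and without it the component $\phi\colon \cL_1 \to \coT_X$ remains merely a quasi-isomorphism rather than the identity.
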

Notice that the symplectic NQ manifolds of \autoref{tab:NQ} appear in different columns of \autoref{tab:shifted}, depending on the degree.

\begin{table}[h]\newcolumntype{M}[1]{>{\centering\arraybackslash}m{#1}} 
\caption{Classification of shifted symplectic $L_\infty$ algebroids}\label{tab:shifted}
\begin{center}\def\arraystretch{1.2}
\begin{tabular}{M{1cm}|M{3.5cm}M{3.5cm}M{2cm}}
\multirow{2}{*}{Shift} & \multicolumn{3}{c}{Structure of the quotient map $X \to \XL$} \\
 &  None & Isotropic & Lagrangian \\ 
\hline
0 & Regular foliation $\cL \subset \cT_X$ with symplectic leaf space  &  $\cL = \cT_X$ &  $\dim X = 0$ \\ \hline
1 & Dirac structure $\cL$ in an exact Courant algebroid $\cE$ & $\cE \cong \cT_X \oplus \cT_X^\vee$ & Poisson structure \\ \hline
2 & $\cL \cong (\coT_X \to \cE)$ for a twisted Courant algebroid $\cE$ & $\cE$ is an (untwisted) Courant algebroid  & $\cE = 0$
\end{tabular}
\end{center}
\end{table}

The proof of the classification illustrates the intriguing recursive nature of shifted symplectic structures: given sufficient knowledge of $n$-shifted symplectic structures and their Lagrangians, one can immediately deduce nontrivial information about $(n-1)$-shifted symplectic structures.  Thus we treat the two-shifted case first, and work our way down.

 We prove that an arbitrary two-shifted symplectic $L_\infty$ algebroid always has a quasi-isomorphic model that is a two-term complex $\cL \cong (\coT_X \to \cE)$ equipped with a strictly nondegenerate two-form, giving a self-dual isomorphism
\begin{align}
\begin{gathered}
\xymatrix{
\pi^*\cT_\XL \ar[d] & \coT_X \ar[r] \ar@{=}[d] & \cE \ar[r] \ar[d] & \cT_X \ar@{=}[d] \\
\pi^*\coT_\XL[2] & \coT_X \ar[r] & \cE^\vee \ar[r] & \cT_X.
}
\end{gathered}
\end{align}
This stands in contrast with other shifts, where such strict nondegeneracy is, in general, impossible.  

 At this point, we are nearly in the setting of~\cite{Roytenberg2002,Roytenberg1998,Severa2005}, where the remaining data of a symplectic NQ manifold is encoded in a binary Courant--Dorfman bracket on $\cE$, making it into a Courant algebroid.  But there are two key differences.  Firstly, the nontrivial closure data means that the Jacobi identity for the Courant--Dorfman bracket may fail by a term involving a four-form as in \cite{Hansen2010}.  Secondly, the possibility of gluing by higher gauge transformations means that the transition functions for the bundle $\cE$ and its bracket may not satisfy the usual cocycle condition, but rather a twisted cocycle condition involving an $\Omega^2$-gerbe.  We therefore arrive at the following result:

\begin{theorem}[see \autoref{sec:courant} and \autoref{thm:shift2}]
The $\infty$-groupoid of two-shifted symplectic $L_\infty$ algebroids  on $X$ is equivalent to the following strict two-groupoid:
\begin{itemize}
\item[] \textbf{Objects} are Courant algebroids $\cE$ twisted by classes in $\coH^2(X,\Omega^{\ge 2}_X)$.

\item[] \textbf{1-morphisms} are given by maps $f\colon \cE_1\rightarrow \cE_2$ that preserve the pairing and anchors identically, and preserve the brackets and gluing maps up to a coboundary $H \in \C^1(X,\Omega^{\ge 2}_X)$.

\item[] \textbf{2-morphisms} are elements $B \in \C^0(X,\Omega^2_X)$ that shear the bundle maps
\end{itemize}
\end{theorem}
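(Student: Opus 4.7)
The plan is to produce the equivalence by first replacing an arbitrary $2$-shifted symplectic $L_\infty$ algebroid by the strict minimal two-term model $\cL \cong (\coT_X \to \cE)$ guaranteed by the preceding reduction, and then systematically translating the residual data into the claimed Courant-theoretic format. Strict nondegeneracy on this model produces a self-dual identification of $\pi^*\cT_\XL$ with $\pi^*\coT_\XL[2]$, equivalently a symmetric nondegenerate pairing on $\cE$, with the internal differential of $\cL$ identified with the dual of the anchor $\cE\to\cT_X$. The Chevalley--Eilenberg differential, made compatible with the symplectic form via the graded Leibniz rule, then delivers the anchor and a binary Courant--Dorfman bracket on $\cE$.

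Next I would apply the retraction theorem for homotopy-closed forms to collapse the entire closure sequence $(\omega_2,\omega_3,\ldots)$ of the symplectic form to a single hypercohomology class $\alpha \in \coH^2(X,\Omega^{\ge 2}_X)$. On a \v{C}ech cover this class is represented by a triple $(\beta, H, K) \in \C^2(\Omega^2_X)\oplus\C^1(\Omega^3_X)\oplus\C^0(\Omega^4_X)$ satisfying the total $(\ddr\pm\delta)$-cocycle condition; tracing this through the Leibniz rule shows that $K$ is the Jacobiator four-form obstructing the strict Jacobi identity on $\cE$, $H$ is a local twist of the bracket and anchor, and $\beta$ is the $\Omega^2$-gerbe class that patches $\cE$ between local trivializations. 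This recovers exactly the objects of the target $2$-groupoid, namely Courant algebroids twisted by $\alpha$.

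For morphisms I would invoke the mapping-space theorem, using homotopy transfer to represent any symplectic $\infty$-morphism by a strict map between the corresponding two-term models. Such a strict map necessarily preserves the nondegenerate pairing and the anchor on the nose; but because the symplectic forms are only homotopy-closed, preservation of the bracket and of the gerbe gluing may fail by a coboundary in $\C^\bullet(X,\Omega^{\ge 2}_X)$ measured precisely by a $\C^1$-piece $H$, reproducing the $1$-morphisms in the statement. Getzler's simplicial description of the mapping space then identifies homotopies between two such strict maps with cochains $B\in\C^0(X,\Omega^2_X)$ acting by $B$-field shears, which is the asserted $2$-morphism data. Truncation to a strict $2$-groupoid is forced by cohomological degree: further homotopies would live in negative \v{C}ech degrees of $\Omega^{\ge 2}_X$ and therefore vanish.

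The main obstacle will be the globalization. Locally, on a cover where $\cL$ admits a strict two-term presentation, the above gives a classical Courant algebroid twisted by $(H,K)$; globally, however, $\cE$ may fail to exist as a single vector bundle and is only defined on patches glued by $B$-field symplectic isomorphisms, whose higher coherence is precisely what the $\Omega^2$-gerbe class encodes. The technical work is to verify that the local minimal models and their chosen symplectic structures assemble into a well-defined functor from the $\infty$-groupoid of $2$-shifted symplectic algebroids to the strict $2$-groupoid of twisted Courant algebroids, and that this functor is essentially surjective and fully faithful — equivalently, that no homotopy information is lost in dimensions $\ge 3$ and that every twisted Courant algebroid arises from an honest shifted symplectic algebroid.
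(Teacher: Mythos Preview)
Your outline captures the broad architecture—reduce to the two-term model, extract a Courant-type bracket, identify closure data with forms on $X$, then handle morphisms—but it elides the step where most of the actual work happens, and in doing so misstates what the retraction of closed forms buys you.

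The normalized complex of \autoref{thm:reduced} does \emph{not} collapse the closed $(2,2)$-form to a hypercohomology class in $\coH^2(X,\Omega^{\ge 2}_X)$; it collapses it to a cocycle in $\Omcl^2(\UL) = \Pot^{1}(\UL) \oplus \Omega^{\ge 2}(U)$. The $\Omega^{\ge 2}(U)$ summand eventually yields the twisting class, but the potential $\beta \in \Pot^{1,2}$ is the essential datum: its weight decomposition gives the pairing $Q \in \Sym^2\cE^\vee$ \emph{and} an operator $\psi\colon \cE\times\cE\to\Omega^1$ whose symbol is $Q$. The Courant--Dorfman bracket is then not the $L_\infty$ bracket encoded in the Chevalley--Eilenberg differential, but the corrected combination
\[
\cour{x,y} = [x,y]_{L_\infty} + \tfrac{1}{2}a^*\ddr\abrac{x,y} - \tfrac{1}{2}a^*\psi(x,y).
\]
For a fixed pairing $Q$ there are many compatible $\psi$, and they differ precisely by a metric connection on $\cE$. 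The paper absorbs this ambiguity by passing through an auxiliary $2$-groupoid $\TCAConn(U)$ of twisted Courant algebroids \emph{with a chosen metric connection}, and proving that both legs of
\[
\Symp_2(U) \longleftarrow \TCAConn(U) \longrightarrow \TCAlgd(U)
\]
are equivalences (the right one because connections exist on affines). Your proposal has no analogue of this step, so the assignment ``symplectic algebroid $\mapsto$ Courant algebroid'' is not yet well-defined.

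A related issue occurs at the $1$-morphism level: a symplectic equivalence between two models in Courant form carries, beyond the bundle map $g$ and the three-form $H$, an extra tensor $\tau \in \cE^\vee\otimes\Omega^1$, and $g$ need not preserve the pairing until one uses a $2$-morphism to gauge $\tau$ to zero. Your claim that strict maps ``necessarily preserve the pairing and anchor on the nose'' skips this reduction, which is exactly what establishes full faithfulness. Finally, the paper avoids your globalization concern entirely by proving the equivalence affine-locally and then taking the homotopy limit; no direct \v{C}ech manipulation of the symplectic form is needed.
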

\autoref{thm:shift2lagrangian} gives an analogous description of the $\infty$-groupoid of two-shifted Lagrangians $\YM \to \XL$ where $Y \subset X$ is a closed submanifold.  Now the objects are pairs $(\cE,\cF)$ of a twisted Courant algebroid $\cE$ on $X$ and a maximal isotropic twisted subbundle $\cF \subset \cE|_Y$ that is involutive for the bracket.  This extends the notion of a Dirac structure with support~\cite{Alekseev,Bursztyn2009,Severa2005}, allowing for twists by  relative cohomology classes of the pair $(X,Y)$.

The rest of the paper consists of applications of the two-shifted classification.  The first application, in \autoref{sec:shift2-ex}, is the construction of  several new examples of twisted Courant algebroids.  In particular, in the complex analytic or algebraic settings, any smooth codimension-two cycle $Y \subset X$ gives rise to a twisted Courant algebroid whose twisting class is the cycle class $[Y] \in \coH^{2,2}(X)$, by a procedure reminiscent of Serre's construction of rank-two vector bundles.  We expect that this construction will allow fivebranes (curves in Calabi--Yau threefolds) to be incorporated into the recent works~\cite{Anderson2014,Baraglia2015,delaOssa2016,Garcia-Fernandez2015} relating  holomorphic Courant algebroids to heterotic string theory.

The second application is the classification of one-shifted symplectic structures in terms of Dirac structures in exact Courant algebroids
(\autoref{thm:shift1}).  This approach illuminates the dual nature of such Dirac structures: on the one hand, they correspond to  Lagrangians in certain degree-two symplectic NQ manifolds~\cite{Severa1998--2000}; on the other hand, they are the linearizations of quasi-symplectic Lie groupoids~\cite{Bursztyn2004,Xu2004}, and by definition, the latter are exactly the atlases for one-shifted symplectic stacks.

The remaining applications illustrate how various well known features of Dirac structures and Courant algebroids can be viewed through the shifted symplectic lens.   The classification of isotropic quotients immediately recovers  \v{S}evera's cohomological classification~\cite{Severa1998--2000} of exact Courant algebroids (\autoref{cor:exact-courant}), and the link~\cite{Bressler2007,Severa1998--2000} between Courant extensions and the first Pontryagin class (\autoref{cor:pontryagin}).  The Courant trivializations and ``generalized tangent bundles'' defined in the context of  generalized complex branes~\cite{Gualtieri2011} are interpreted as Lagrangians (\autoref{cor:gen-tan} and \autoref{sec:shift1-lagrangian}).  Finally, the formalism of derived Lagrangian intersections gives a perspective on both the tensor product~\cite{Alekseev2009,Gualtieri2011}  of Dirac structures (\autoref{sec:shift1-symp}), and the equivalence~\cite{Crainic2004,Weinstein1987} between symplectic groupoids and Poisson manifolds (\autoref{sec:Poisson}).

\paragraph*{Acknowledgements:} We would like to thank X.~de la Ossa, M.~Garcia-Fernandez, M.~Gualtieri, N.~Hitchin and R.~Mehta for useful conversations. B.P.~was supported by EPSRC Grant EP/K033654/1. P.S.~was partially supported by EPSRC Grant EP/I033343/1.

\section{Basics of $L_\infty$ algebroids}
\label{sec:defs}

Throughout the paper, a \defterm{manifold} is a real $C^\infty$ manifold, a complex manifold, or a smooth algebraic variety over a field of characteristic zero.  We denote by $\KK$ be the base field, which is $\RR$, $\CC$, or an arbitrary field $\KK$ of characteristic zero, respectively.  The structure sheaf $\cO_X$ is the corresponding sheaf of $C^\infty$, holomorphic, or algebraic functions.

We also consider the subclass of \defterm{affine manifolds}, which are arbitrary $C^\infty$-manifolds, Stein manifolds, and smooth affine varieties, respectively.  The essential features of affine manifolds that we need are that extensions of vector bundles always split, and appropriate sheaves of modules are acyclic for sheaf cohomology.  Moreover, every manifold has a good cover by affine manifolds. 

In this section, we cover the basic definitions and properties of $L_\infty$ algebroids on arbitrary manifolds and the corresponding quotient stacks.  We focus first on the affine case, and then extend to general manifolds by gluing along affine covers.

\subsection{Affine $L_\infty$ algebroids}

\label{sect:Linfalgd}

Let $U$ be an affine manifold, and consider a bounded complex
\[
\cL = \rbrac{\xymatrix{
\cdots \ar[r]^-{\d} & \cL_2 \ar[r]^-{\d} & \cL_1 \ar[r]^-{\d} & \cL_0
}}
\]
of finite rank vector bundles, where $\cL_i$ sits in degree $-i$.  Here and throughout, we make no notational distinction between a vector bundle and its corresponding  finite rank projective $\cO(U)$-module of sections.

\begin{defn}
\label{def:Liealgebroid}
An \defterm{$L_\infty$ algebroid structure on $\cL$} is an $L_\infty$ algebra structure on the $\KK$-vector space $\cL$, determined by a $\KK$-bilinear bracket
\[
[-,-] : \cL \times \cL \to \cL
\]
of degree zero and a collection of $\cO(U)$-multilinear brackets
\[
[-,\ldots,-] \colon \underbrace{\cL \times \cdots \times \cL}_{n\textrm{ times}} \to \cL, \qquad n > 2
\]
of degree $2-n$, together with an $\cO(U)$-linear morphism of complexes
\[
a\colon \cL \to \cT_U,
\]
subject to the Leibniz rule
\[
[x,fy] = (\lie{ax}f)y + f[x,y]
\]
for $x,y \in \cL$ and $f \in \cO(U)$.  An $L_\infty$ algebroid is a \defterm{Lie $n$-algebroid on $U$} if the underlying complex is concentrated in degrees $[-(n-1),\ldots,0]$.
\end{defn}

In what follows, we will often suppress the anchor and brackets from notation and simply say that $\cL$ is an $L_\infty$ algebroid on $U$.  We remark that this notion of an $L_\infty$ algebroid is often referred to as a \emph{split} $L_\infty$ algebroid; see, e.g.,~\cite{Sheng2011}.

We recall that by an $L_\infty$ structure, we mean that the brackets are graded skew-symmetric, and that the higher Jacobi identity
\begin{align*}
\sum_{i+j = n+1} \sum_{\sigma \in \Shuf(i,j-1)}(-1)^{\sigma + \epsilon}(-1)^{i(j-1)} [[x_{\sigma_1},\ldots,x_{\sigma_i}],x_{\sigma_{i+1}},\ldots,x_{\sigma_n}] =0
\end{align*}
holds for all $n$.  Here $[-]_1 = \d$ is the differential on the underlying complex,  $\Shuf(i,j-1)$ denotes the set of $(i,j-1)$-unshuffles, $(-1)^{\sigma}$ is the sign of the permutation $\sigma$, and  $(-1)^{\epsilon}$ is determined by the Koszul sign rule.

For any $L_\infty$ algebroid, the anchor map is automatically compatible with the brackets, in the sense that $a[x,y] = [ax,ay]$.  Thus it gives rise to an action of the $L_\infty$ algebra $\cL$ on $\cO(U)$ by derivations, i.e.~an infinitesimal action of $\cL$ on $U$.  We denote the quotient by
\[
\pi\colon U \to \UL.
\]
This quotient can be defined rigorously as a formal higher stack \cite{Nuiten2017}. For example, when $\cL$ is concentrated in degree zero, it is a Lie algebroid in the ordinary sense, and $\UL$ is the quotient of $U$ by the formal groupoid integrating $\cL$.

Functions on the quotient are given by the $\cL$-invariant functions
\[
\coH^0(\UL,\cO_\UL) = \cO(U)^\cL = \set{f \in \cO(U)}{\lie{ax}f = 0 \textrm{ for all }x \in \cL}
\]
However, the quotient is not affine, and to fully capture its geometry (e.g.~the higher cohomology of the structure sheaf),  we need to replace the ring of invariants with a derived enhancement thereof. 

 As is well known, this enhancement is provided by the commutative differential graded algebra (cdga) known as the \defterm{Chevalley--Eilenberg algebra}
  \[
 \cO(\UL) := (\Sym_{\cO(U)}(\cL^\vee[-1]),\delta).
 \] 
 Here $\cL^\vee[-1]$ is the dual complex, with a shift, and $\Sym$ denotes the graded symmetric algebra.   When $\cL$ is concentrated in degree zero, we have that $\cO(\UL) = \wedge^\bullet \cL^\vee$ with $\cL^\vee$ in degree one.  But in general, $\cO(\UL)$ has two distinct gradings: the first is the internal degree induced by the grading on $\cL$, and the second is the \defterm{weight}, defined so that the $n$th symmetric power has weight $n$.   Thus an element of $\cL_j^\vee$ defines a generator for $\cO(\UL)$ of degree $j+1$ and weight one.  We denote the weight-$n$ component of an element $u \in \cO(\UL)$ by $u_n$.

The differential $\delta$ on  $\cO(\UL)$ has degree one, but components of many weights, corresponding to the differential on $\cL$ and the higher Lie brackets.  Viewing elements $u \in \cO(\UL)$ as multilinear operators on $\cL$, the differential $\d u$ is defined by its weight components
\begin{align*}
(\d u)_n(x_1,\ldots,x_n) &= (\dCE u)_n(x_1,\ldots,x_n)
+ \sum_{i=1}^n (-1)^\epsilon \lie{a x_i } u_{n-1}(x_1,\ldots,\widehat{x_i},\ldots,x_n)
\end{align*}
where $\dCE u$ is the $\KK$-multilinear operator $\cL\times \cdots \times \cL \to \cO(U)$ defined by summing over all possible insertions of brackets into $u$:
\begin{align}
(\dCE u)_n(x_1,\ldots,x_n) = \sum_{i+j=n+1} \sum_{\sigma \in \Shuf(i,j-1)} (-1)^{{i \choose 2}+\epsilon} u_j([x_{\sigma_1},\ldots,x_{\sigma_i}]_i,x_{\sigma_{i+1}},\ldots,x_{\sigma_n}) .\label{eqn:CEterm}
\end{align}
Thus $\delta$ is simply the Chevalley--Eilenberg differential of $\cO(U)$ when viewed as a module over the $L_\infty$ algebra $\cL$, but restricted to the $\cO(U)$-multilinear cochains. By \cite{Bonavolonta2013}, this formula gives a bijective correspondence between $L_\infty$ algebroid structures on  $\cL$ and  differentials on $\Sym(\cL^\vee[-1])$ that make it into a cdga (often called $Q$-structures or homological vector fields).

There is a natural morphism of cdgas
\[
\pi^*\colon \cO(\UL) \to \cO(U)
\]
given by projection on the degree-zero component.  It corresponds geometrically to the pullback of functions along the quotient map $\pi : X \to \XL$.  Indeed, taking the zeroth cohomology, we obtain the inclusion
\[
\coH^0(\pi^*)\colon \cO(U)^\cL \to \cO(U) 
\]
of invariant functions.  More generally, the Chevalley--Eilenberg differential $\delta$ plays the role of the \v{C}ech differential on $\UL$, so that
\[
\coH^\bullet(\cO(\UL)) = \coH^\bullet(\UL,\cO_\UL)
\]
is the full cohomology of the structure sheaf of the quotient stack.  When $\cL$ is a classical Lie algebroid, this follows from a presentation of the formal groupoid integrating a given Lie algebroid in terms of its jet algebra and \cite[Theorem 3.20]{Kowalzig2011} which identifies groupoid cohomology and Lie algebroid cohomology. We expect this result to hold for general $L_\infty$ algebroids as well.

\subsection{Morphisms of affine $L_\infty$ algebroids}

Suppose now that $\cL$ and $\cM$ are $L_\infty$ algebroids on an affine manifold $U$. 

\begin{defn} A \defterm{(base-preserving) $L_\infty$-morphism $f : \cM\to \cL$} is a morphism of cdgas
\[
f^*\colon \cO(\UL) \to \cO(\UM)
\]
that acts as the identity on the degree-zero part, i.e.~on $\cO(U)$.
\end{defn}

So a base-preserving morphism corresponds to a commutative diagram 
\[
\xymatrix{
& U \ar[ld]_{\pi_\cM} \ar[rd]^{\pi_\cL}& \\
\UM \ar[rr]^-{f} && \UL
}
\]
of the quotients.  By \cite{Bonavolonta2013}, such a morphism may be described explicitly by a sequence  of $\cO(U)$-multilinear graded-skew-symmetric  maps
\[
f_n\colon \underbrace{\cM \times \cdots \times \cM}_{n\textrm{ times}}
 \to \cL, \qquad n \ge 1
\]
of degree $1-n$, with the following properties:
\begin{enumerate}
\item $f_1\colon \cM \to \cL$ is a morphism of complexes of vector bundles that is compatible with the anchors, i.e.~$a_\cM f_1 = a_\cL$.
\item The sequence $f_1,f_2,\ldots,$ defines an $L_\infty$ morphism of the $\KK$-linear $L_\infty$ algebras underlying $\cL$ and $\cM$, i.e.~we have the usual bracket compatibilities
\begin{align*}
& \sum_{i+j=n+1}\sum_{\sigma \in \Shuf(i,j-1)} (-1)^{\sigma+(i-1)j} f_j([x_{\sigma_1},\ldots,x_{\sigma_i}],x_{\sigma_{j+1}},\ldots,x_{\sigma_n}) \\
=& \sum_{\stackrel{j \ge 1}{i_1+\cdots+i_j = k}} \sum_{\sigma \in \Shuf(i_1,\ldots,i_k)} (-1)^{\sigma+\tau}[f_{i_1}(x_1,\ldots,x_{i_1}),\ldots,f_{i_j}(x_{\sigma_{n-i_j+1}},\ldots,x_{\sigma_n})]
\end{align*}
where $\tau = (j-1)(i_1-1)+\cdots+1\cdot(i_{j-1}-1)$.
\end{enumerate}
An $L_\infty$ morphism is an \defterm{$L_\infty$ quasi-isomorphism} if $f_1$ is a quasi-isomorphism of complexes of vector bundles.

There are also notions of homotopies between morphisms, homotopies between homotopies, etc.~yielding a whole simplicial set $\Hom(\cM,\cL)_\bullet$ that models the mapping space.  It is described by the following standard construction.   (See \cite{Dolgushev2015} for the simplicial category of $L_\infty$ algebras.)  Let $\Omega^\bullet(\Delta^n)$ be the algebra of polynomial functions on the $n$-simplex, i.e.
\[
\Omega^\bullet(\Delta^n) = \KK[t_0, ..., t_n, \ddr t_0,\ldots,\ddr t_n] / \rbrac{\sum t_i = 1 , \sum \ddr t_i = 0}
\]
The collection $\Omega_\bullet = \{\Omega^\bullet(\Delta^n)\}_n$ forms a simplicial cdga with the usual face and degeneracy maps.  We then define the set of $n$-simplices $\Hom(\cM,\cL)_\bullet$ to be the set of $\Omega_\bullet$-linear cdga maps $ \Omega_\bullet \otimes \cO(\UL) \to \Omega_\bullet \otimes \cO(\UM)$ that act trivially on the summand $\Omega_\bullet \otimes \cO(U)$.

By construction an $n$-simplex of $\Hom(\cM,\cL)_\bullet$ is an $L_\infty$-morphism of $\Omega_n$-linear $L_\infty$ algebroids $\cM\otimes \Omega_n \to\cL\otimes \Omega_n$. Due to $\Omega_n$-linearity this is the same as a morphism of $\KK$-linear $L_\infty$ algebras $\cM \to \cL \otimes_{\KK} \Omega_n$ such that the constituent maps $f_i \colon \wedge^i \cM \to \cL \otimes \Omega_n$ are $\cO(U)$-multilinear, and  the component $f_1\colon \cM \to \cL \otimes \Omega_n$ is compatible with the anchors. In other words, $f_1$ projects to $a_\cM \otimes 1$ under the natural map
\[
\Hom_\KK(\cM,\cL\otimes\Omega_n) = \Hom_\KK(\cM,\cL)\otimes \Omega_n \to \Hom(\cM,\cT_U)\otimes \Omega_n
\]
given by composing with the anchor $a_\cL\colon \cL \to \cT_U$.

With this simplicial set of morphisms, it is evident that $L_\infty$ algebroids on $U$ from a simplicial category (a category enriched in simplicial sets).  However, these simplicial sets are quite large, and it is useful to have a more efficient model for the homotopy type of the mapping space.  To this end, we follow Getzler and use Dupont's gauge operator
\[
s_n\colon \Omega^k(\Delta^n)\rightarrow \Omega^{k-1}(\Delta^n);
\]
to cut down on the redundancy; we refer to \cite[Section 3]{Getzler2009} for the detailed description of this operator.  Restricting to morphisms that satisfy the gauge condition
\[
(s_\bullet\otimes \id)f^*(1\otimes g)  = 0
\]
for any $g\in \cO(\UL)$, we obtain a simplicial subset
\[
\Hom_\red(\cM,\cL)\subset \Hom(\cM,\cL)
\]
whose relevance is illustrated by the following result.

\begin{thm}\label{thm:mapping-space}
Let $U$ be an affine manifold.  For any $L_\infty$ algebroids $\cL$ and $\cM$ on $U$, the following statements hold:
\begin{enumerate}
\item The simplicial set $\Hom(\cM, \cL)$ is a Kan complex, i.e.~a weak $\infty$-groupoid.

\item The inclusion $\Hom_{\red}(\cM, \cL)\subset \Hom(\cM,\cL)$ is a weak equivalence.

\item If $\cM$ is a Lie $n$-algebroid, then $\Hom_\red(\cM,\cL)$ is an $(n-1)$-groupoid.
\end{enumerate}
\label{thm:Linfhomotopytype}
\end{thm}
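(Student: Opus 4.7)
The plan is to reduce all three parts to Getzler's results on Maurer--Cartan simplicial sets of pronilpotent $L_\infty$ algebras \cite{Getzler2009}.

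First, I would identify $\Hom(\cM,\cL)$ with a Maurer--Cartan simplicial set. A base-preserving $\Omega_n$-linear cdga map $f^*\colon \cO(\UL)\otimes\Omega_n \to \cO(\UM)\otimes\Omega_n$ is freely determined by its restriction to the generators $\cL^\vee[-1]\subset \cO(\UL)$, producing an $\cO(U)$-linear degree-preserving map
\[
\cL^\vee[-1] \to \cO_+(\UM) \otimes \Omega_n
\]
into the augmentation ideal. Compatibility with the Chevalley--Eilenberg differentials becomes the Maurer--Cartan equation in the $L_\infty$ algebra
\[
\mathfrak{h}_n := \Hom_{\cO(U)}\bigl(\cL^\vee[-1],\; \cO_+(\UM)\otimes\Omega_n\bigr),
\]
whose brackets combine the Taylor components of the CE differential on $\cO(\UM)$ (encoding $\cM$) with the $L_\infty$ brackets of $\cL$ contracted along $\cL^\vee$. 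The anchor condition $a_\cL\circ f_1 = a_\cM$ fixes the weight-one component of $f^*$ on the $\cT_U^\vee$-summand of $\cL^\vee$, cutting out an affine MC subset which is a translate of a sub-$L_\infty$ algebra $\mathfrak{h}_n' \subset \mathfrak{h}_n$.

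Next, I would verify pronilpotence by filtering $\mathfrak{h}_n'$ according to the weight of the target in $\cO_+(\UM)$, i.e., by symmetric powers $\Sym^{\ge k}_{\cO(U)}(\cM^\vee[-1])$. Each higher $L_\infty$ bracket strictly raises total weight, so the filtration is complete, exhaustive, and has nilpotent quotients, placing us in Getzler's setting.

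With this identification, the three conclusions follow. Part~(1): \cite{Getzler2009} shows that $\MC_\bullet$ of a pronilpotent $L_\infty$ algebra is a Kan complex. Part~(2): the Dupont gauge condition defining $\Hom_\red$ coincides with Getzler's reduced subcomplex, which he constructs as a simplicial deformation retract of the full $\MC_\bullet$. Part~(3): when $\cM$ is a Lie $n$-algebroid, the generators $\cM^\vee[-1]$ live in cohomological degrees $[1,n]$; combined with the weight filtration, this bounds the negative cohomological degrees of $\mathfrak{h}_n'$ in exactly the range where Getzler's truncation theorem yields an $(n-1)$-groupoid.

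The main technical obstacle is handling the anchor compatibility: it is an inhomogeneous affine constraint rather than a purely linear one, so one must check that the implicit ``base point'' (fixing the image of $\cT_U^\vee$ under the weight-one piece of $f^*$) is preserved by all higher MC operations and by the Dupont gauge-fixing retraction, and that what remains is genuinely a pronilpotent $L_\infty$ subalgebra. The remaining work consists of matching sign and shift conventions between the cdga/CE formalism used here and the $L_\infty$-algebra conventions of \cite{Getzler2009}.
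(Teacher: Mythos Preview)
Your strategy matches the paper's: express $\Hom(\cM,\cL)_\bullet$ as a Maurer--Cartan simplicial set of a nilpotent $L_\infty$ algebra and invoke Getzler (his Proposition~4.7, Corollary~5.11, and Theorem~5.4 for parts (1)--(3) respectively). The difference lies in how the controlling $L_\infty$ algebra is realized. You work on the algebra side, with $\mathfrak{h}=\Hom_{\cO(U)}(\cL^\vee[-1],\cO_+(\UM))$ and the anchor compatibility imposed as an external affine constraint, whereas the paper works on the dual coalgebra side: it starts from the standard $\KK$-linear convolution algebra $\g=\Hom_\KK(\Sym^{\ge 1}_\KK(\cM[1]),\cL)$, fixes an actual $L_\infty$-algebroid morphism $f$ as basepoint, and then checks that the $\cO(U)$-multilinear subspace $\h=\Hom_{\cO(U)}(\Sym^{\ge 1}_{\cO(U)}(\cM[1]),\ker a_\cL)$ is closed under the $f$-twisted brackets. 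Twisting by a genuine MC element rather than by an arbitrary solution of the anchor constraint alone eliminates curvature and turns your ``main technical obstacle'' into a short Leibniz-rule computation (the only nonlinearities, coming from the binary brackets on $\cL$ and $\cM$, cancel because $f$ intertwines the anchors); your outline would work but needs precisely this refinement. One small correction to your description of $\mathfrak{h}$: in the convolution structure, the higher brackets come from the Taylor components of the differential on $\cO(\UL)$ (encoding the brackets of $\cL$) together with the commutative product on $\cO(\UM)$; the $L_\infty$ structure of $\cM$ enters only through the differential on $\cO_+(\UM)$, hence only in $l_1$.
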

\begin{proof}
If $\Hom(\cM,\cL)_\bullet$ is empty, the statements are vacuous.  Thus we may assume that it is nonempty and fix an $L_\infty$ algebroid morphism $f\colon \cM\rightarrow \cL$. Using $f$, we will express $\Hom(\cM,\cL)_\bullet$ as the Maurer--Cartan simplicial set $\MC_\bullet(\h)$ of a nilpotent $L_\infty$ algebra $\h$ concentrated in degrees $(-(n-1),\infty)$, so that the three statements follow directly from Proposition 4.7, Corollary 5.11 and Theorem 5.4 in \cite{Getzler2009}, respectively.

To begin, consider the simplicial set $\Hom_\KK(\cM,\cL)_\bullet$ of all $\KK$-linear $L_\infty$ morphisms $\cM \to \cL \otimes \Omega_\bullet$.  We recall from, e.g.~\cite[Section 3]{Dolgushev2014} or \cite[Section 3]{Shoikhet2007}, that $\Hom_\KK(\cM,\cL)_\bullet$ is the Maurer--Cartan simplicial set of a $\KK$-linear $L_\infty$ algebra
\[
\g =  \Hom_\KK(\Sym^{\geq 1}_\KK(\cM[1]), \cL).
\]
The action of the differential $\delta_\g$ on $\psi \in \g$ is given by its weight components
\begin{align}
(\delta_\g \psi)(x_1,\ldots,x_m) &= \delta_\cL(\psi_m(x_1,\ldots,x_m)) + (\dCE \psi)(x_1,\ldots,x_n)\label{eqn:map-diff}
\end{align}
where $\dCE$ is defined by the same formula as in \eqref{eqn:CEterm}.  Meanwhile, the binary bracket of $\psi,\phi \in \g$ is given by
\begin{align}
[\psi,\phi]_{\g}(x_1,\ldots,x_m) &= \sum_{i+j=m}\sum_{\sigma \in \Shuf(i,j)} (-1)^\epsilon [\psi_i(x_{\sigma_1},\ldots,x_{\sigma_i}),\phi_j(x_{\sigma_{i+1}},\ldots,x_{\sigma_m})]_\cL. \label{eqn:map-brac}
\end{align}
where $x_1,\ldots,x_m \in \cM$, and the higher brackets are defined similarly, by composition with the higher brackets on $\cL$.

Consider the vector subspace
\[
\h = \Hom_{\cO(U)}(\Sym_{\cO(U)}^{\ge 1}(\cM[1]),\cK) \subset \g
\]
consisting of $\cO(U)$-multilinear maps that take values in $\cK = \ker a_\cL \subset \cL$.  Evidently elements of $\Hom(\cM,\cL)_\bullet$ can be identified with Maurer--Cartan elements in $\g\otimes\Omega_\bullet$ of the form $f \otimes 1 + g$ where $g \in \h \otimes \Omega_\bullet$.  Since $f$ itself is a Maurer--Cartan element, $g$ must be a Maurer--Cartan element for the $f$-twisted $L_\infty$ brackets
\begin{align}
[\psi_1,\ldots,\psi_k]_{\g,f} = \sum_{l \ge 0}\frac{1}{l!}[\underbrace{f,\ldots,f}_{l\textrm{ times}},\psi_1,\ldots,\psi_k]_\g. \label{eqn:MC-twist}
\end{align}
Thus to identify $\Hom(\cM,\cL)_\bullet = \MC_\bullet(\h)$, it is sufficient to show that $\h \subset \g$ is preserved by the twisted brackets.

To this end, suppose that $\psi_1,\ldots,\psi_k \in \h$ and consider the $\KK$-linear map
\[
[\psi_1,\ldots,\psi_k]_{\g,f} \colon \Sym^{\ge 1}_\KK\cM[1] \to \cL.
\]
Applying the anchor of $\cL$ to \eqref{eqn:map-diff} and \eqref{eqn:map-brac}, we see that this linear map automatically takes values in $\cK \subset \cL$, so it remains to check that it is  $\cO(U)$-multilinear.   For this, we observe that if $k \ge 2$, then every term on the right hand side of \eqref{eqn:MC-twist} is an $\cO(U)$-multilinear operator; indeed, such a term is either a binary bracket $[\psi_1,\psi_2]$, which is $\cO(U)$-bilinear  because $\psi_1,\psi_2$ are valued in $\cK$, or it is a higher bracket, and therefore automatically multilinear.  Similarly,  for $k=1$, we have the twisted differential
\begin{align*}
(\delta_{\g,f}\psi)(x_1,\ldots,x_m) &= -\sum_{i<j} (-1)^{\epsilon}\psi_m([x_i,x_j]_\cM,x_1,\ldots,\widehat x_i,\ldots,\widehat x_j,\ldots,x_m) \\
&\ \ \ + \sum_i (-1)^\epsilon [f(x_i),\psi_m(x_1,\ldots,\widehat x_i,\ldots,x_m)]_\cL + \cdots
\end{align*}
where $\cdots$ denotes terms that are manifestly $\cO(U)$-multilinear.  Using the Leibniz rule for the binary brackets on $\cL$ and $\cM$ and the fact that $f$ intertwines the anchors, we see that the nonlinearities cancel, as desired.
\end{proof}

This construction can be easily generalized to maps that do not preserve the base.  Suppose that $f\colon V \to U$ is a morphism of affine manifolds, and $\cL$ and $\cM$  are $L_\infty$ algebroids on $U$ and $V$.  A morphism $\cM \to \cL$ covering $f$ is an extension of $f^*\colon \cO(U) \to \cO(V)$ to a morphism of cdgas $\cO(\UL) \to \cO(\VM)$, with higher homotopies defined by tensoring with forms on simplices as above. We refer to \cite[Definition 4.1.6]{Bonavolonta2013} for a more explicit description of morphisms in terms of brackets.  The conclusion is  that every morphism $\cM \to \cL$ covering $f$ factors uniquely through the pullback $L_\infty$ algebroid $f^!\cL$, defined on the level of complexes by the fibre product
\[
f^!\cL = \cL \underset{f^*\cT_U}{\times} \cT_V.
\]
Hence this more general situation reduces to the base-preserving one.
\begin{remark}
When the anchor of $\cL$ is not transverse to $f$, one should be careful about the interpretation of this fibre product.  Either one views it as a complex of coherent sheaves, or better, one takes the full derived fibre product, which will have cohomology in degree one.  Such objects are slightly more general than the $L_\infty$ algebroids we have been considering, but the argument in \autoref{thm:mapping-space} is insensitive to this difference.
\qed
\end{remark}

\subsection{Homotopy transfer}

Recall the Homotopy Transfer Theorem: if two complexes of vector spaces are quasi-isomorphic, and one of them is an $L_\infty$ algebra, then so is the other, and the quasi-isomorphism extends to an equivalence of the $L_\infty$-structures. See, for example, \cite[Theorem 10.3.9]{Loday2012}.  In this section extend this result to algebroids:
\begin{thm}
\label{thm:homotopytransfer}
Let $\cL'$ and $\cL''$ be bounded complexes of vector bundles on an affine manifold $U$, and let $f\colon \cL' \to \cL''$ be a quasi-isomorphism.  Given an $L_\infty$ algebroid structure on $\cL''$, there exists an $L_\infty$ algebroid structure on $\cL'$ and an extension of $f$ to an $L_\infty$ quasi-isomorphism.
\end{thm}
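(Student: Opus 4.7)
The strategy is to mimic the classical Homotopy Transfer Theorem (HTT) for $L_\infty$ algebras over $\KK$ and then verify that the transferred brackets satisfy the extra $\cO(U)$-linearity and Leibniz properties required of an $L_\infty$ algebroid.

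Since $U$ is affine, I first complete $f$ to strong deformation retract data: $\cO(U)$-linear maps $g\colon \cL''\to\cL'$ and $h\colon \cL''\to\cL''$ of degree $-1$ satisfying
\[
gf = \id_{\cL'},\quad \id_{\cL''}-fg = \delta h + h\delta,\quad hf=0,\ gh=0,\ h^2=0.
\]
This is standard: finite-rank vector bundles on $U$ are projective $\cO(U)$-modules, so any quasi-isomorphism of bounded complexes of projectives can be refined to such an SDR.  I then define the anchor on $\cL'$ by $a' := a''\circ f\colon\cL'\to\cT_U$, which is automatically a chain map.

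Next I apply the $\KK$-linear HTT (e.g.~\cite[Theorem 10.3.9]{Loday2012}) to this SDR, regarding $\cL''$ as an $L_\infty$ algebra over $\KK$.  This produces $\KK$-multilinear higher brackets $l_n'$ on $\cL'$ satisfying the $L_\infty$ relations, together with an $L_\infty$ quasi-isomorphism $F\colon\cL'\to\cL''$ with $F_1 = f$.  The brackets are given by the standard tree sum: leaves labeled $f$, root labeled $g$, internal edges labeled $h$, and internal vertices labeled by the brackets of $\cL''$ of the appropriate arity.

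It remains to verify that $(\cL', a', l_n')$ is an $L_\infty$ algebroid.  For $n=2$ a direct calculation using $gf=\id$ and $\cO(U)$-linearity of $g$ yields
\[
l_2'(x,ry) = g[f(x),rf(y)]'' = r\, l_2'(x,y) + (a' x)(r)\, y,
\]
which is the Leibniz rule.  For $n\ge 3$ I claim each $l_n'$ is $\cO(U)$-multilinear.  In the tree sum, the only source of a Leibniz anomaly when pulling $r\in\cO(U)$ past a leaf $f(x_i)$ is a binary internal vertex $[f(x_i),a]''$, which contributes a term proportional to $(a'' a)(r)\,f(x_i)$.  If $a$ lies in the image of $h$ the anomaly vanishes, because $h$ raises homological degree and the anchor is zero on positive-degree components, so $a''h = 0$.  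Otherwise $a = f(x_j)$ is a sibling leaf, but for $n \ge 3$ this vertex cannot be the root, so its output is routed along an internal edge through a homotopy $h$, and the stray term $f(x_i)$ is killed by $hf = 0$.  Higher-arity internal vertices contribute no anomaly at all, being $\cO(U)$-multilinear already.  The main obstacle is coordinating this case-analysis uniformly across all tree shapes: one must confirm that every anomaly produced by the $\KK$-linear tree sum factors through one of the vanishing composites $hf$, $h^2$, $a''h$, or $gh$.  Once $\cO(U)$-multilinearity is established, the same SDR data endows the higher components $F_n$ of $F$ with $\cO(U)$-multilinearity, making $F$ an $L_\infty$ quasi-isomorphism of algebroids.
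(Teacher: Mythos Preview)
Your opening step contains a genuine gap: it is \emph{not} true that an arbitrary quasi-isomorphism $f\colon\cL'\to\cL''$ of bounded complexes of projectives can be completed to a special deformation retract with $gf=\id_{\cL'}$.  That would force $f$ to be split injective, which fails already for the quasi-isomorphism from an acyclic complex such as $(\cO\xrightarrow{\id}\cO)$ to the zero complex.  What \emph{is} true is that $f$ is a chain homotopy equivalence, but this is weaker than what you claim.

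The paper repairs exactly this point by factoring $f$ through the mapping cylinder as a composite of two maps, each of which \emph{is} part of an SDR, but in opposite directions: one is an inclusion $i\colon\cL\hookrightarrow\cM$ (small into big) and the other a projection $p\colon\cM\twoheadrightarrow\cL$ (big onto small).  The ``projection'' case is essentially your argument---the paper's Lemma~\ref{lm:homotopytransfer2} applies the $\KK$-linear HTT and then checks $\cO(U)$-multilinearity via the side conditions, just as you do.  But the ``inclusion'' case requires a separate argument (Lemma~\ref{lm:homotopytransfer1}), where the $L_\infty$ algebroid structure lives on the \emph{smaller} complex and must be extended to the larger one; there the paper uses a representation-up-to-homotopy construction rather than tree sums.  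Your tree-sum analysis of the $\cO(U)$-multilinearity for $n\ge3$ is in fact a more careful unpacking of what the paper asserts tersely in Lemma~\ref{lm:homotopytransfer2}, and it is correct---but it only covers half of the argument.
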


Recall that a \defterm{special deformation retract} between bounded complexes $\cL$ and $\cM$ is a pair of morphisms
\[
\xymatrix{
 \cL \ar@/^/[r]^-{i} &  \cM \ar@/^/[l]^{p} 
}
\]
and a homotopy operator $h\colon \cM\rightarrow \cM[-1]$ satisfying the identities
\[
ip - \id = \d h + h\d, \qquad pi = \id, \qquad hi=0,\qquad ph=0,\qquad h^2=0.
\]
An arbitrary quasi-isomorphism of complexes may be factored into a pair of special deformation retracts using the mapping cylinder construction.  Thus, it is enough to prove \autoref{thm:homotopytransfer} in the case where the map $f \colon \cL' \to \cL''$ is one of the maps $i \colon \cL \to \cM$ or $p \colon \cM \to \cL$ in a special deformation retract.    These two cases are the content of the following two lemmas.

\begin{lm}
\label{lm:homotopytransfer1}
If $\cL$ is an $L_\infty$ algebroid, then there exists an $L_\infty$ algebroid structure on $\cM$ such that both $i \colon \cL\rightarrow \cM$ and $p\colon \cM\rightarrow \cL$ are $L_\infty$ quasi-isomorphisms.
\end{lm}
\begin{proof}
Use the deformation retract to split
\[
\cM\cong \cL\oplus \cK
\]
where $\cK  = \ker p$ is a contractible complex. Therefore, we can identify
\[\cK\cong \Cone(\cK'\stackrel{\id}\rightarrow \cK')\]
for a complex of vector bundles $\cK'$.

Pick a (not necessarily flat) $\cL$-connection on $\cK'$. Then by \cite[Example 3.8]{Abad2012} we obtain the data of a representation up to homotopy on $\cK$, i.e. a differential on $\Sym(\cL^\vee[-1])\otimes \cK$ making it into a dg module over the dg algebra $\cO(\UL)$.  Therefore we may extend the differential on $\Sym(\cL^\vee[-1])\otimes \cK$ to the symmetric algebra
\[
\Sym(\cM^\vee[-1])\cong \Sym(\cL^\vee[-1]) \utens{\cO(U)} \Sym(\cK^\vee[-1])
\]
resulting in an $L_\infty$ algebroid structure on $\cM$.  By construction, the projection
\[
\Sym(i^*)\colon \Sym(\cM^\vee[-1])\rightarrow \Sym(\cL^\vee[-1])
\]
is compatible with the differentials and similarly for $\Sym(p^*)$.  Hence $p$ and $i$ define strict quasi-isomorphisms of $L_\infty$ algebroids.
\end{proof}

\begin{lm}
\label{lm:homotopytransfer2}
If $\cM$ is an $L_\infty$ algebroid, then there exists an $L_\infty$ algebroid structure on $\cL$ and an extension of $i$ to an $L_\infty$ quasi-isomorphism.
\end{lm}
\begin{proof}
We define an anchor on $\cL$ by the composite $ai$ where $a$ is the anchor on $\cM$.  To define the brackets, we use the homotopy transfer theorem for $L_\infty$ algebras (e.g.~\cite[Theorem 10.3.9]{Loday2012}).  It allows us to transfer the $\KK$-linear $L_\infty$ algebra structure on $\cM$ to one on $\cL$, and to extend $i$ to a sequence of  maps
\[
i_n\colon \underbrace{\cL \times \cdots \times \cL}_{n\textrm{ times}}  \to 
 \cM
 \]
giving a $\KK$-linear  $L_\infty$-quasi-isomorphism.

By inspection of the explicit formulae for the transferred structure, we see that the maps $i_n$ are $\cO(U)$-multilinear due to the equations $h^2 = 0$ and $hi=0$.  The binary bracket is
\[
[x, y]_{\cL} = p[ix,iy]_{\cM}
\]
and since $pi=\id$ we see that it satisfies the correct Leibniz rule. Finally, the higher brackets on $\cL$ are manifestly $\cO$-linear since those of $\cM$ are.
\end{proof}

\subsection{Globalization}

We now give a definition of $L_\infty$ algebroids valid for arbitrary manifolds $X$.  The idea is to glue together $L_\infty$ algebroids defined on affine subsets as above.  Crucially, we are allowed to glue by arbitrary quasi-isomorphisms, rather than just strict isomorphisms.  This presents the difficulty that the cocycle condition may be satisfied only up to higher homotopies which in turn satisfy an infinite sequence of coherences.  This coherences may be succinctly summarized as follows.

Suppose that  $U$ is an affine manifold.  Since the simplicial category of $L_\infty$ algebroids on $U$ is enriched in Kan complexes, its simplicial nerve is an $\infty$-category. We denote by $\LA(U)$ the maximal $\infty$-subgroupoid, i.e.~the subcategory where we only keep invertible morphisms ($L_\infty$-quasi-isomorphisms).  

Given an open embedding of affine manifolds $i\colon U\rightarrow V$, there is an obvious pullback functor
\[
i^*\colon \LA(V) \to \LA(U)
\]
given by restricting bundles, anchors, brackets, etc. 
\begin{defn}
Let $X$ be a manifold. The \defterm{space of $L_\infty$ algebroids on $X$} is the homotopy limit 
\[
\LA(X) = \lim_{U\subset X} \LA(U).
\]
over affine open subsets $U \subset X$.
\end{defn}

Notice that if $X$ is an affine manifold, the category of affine open subsets has a final object; hence this definition recovers the original definition.  One can show that $\LA$ satisfies descent, i.e.~it is an $\infty$-sheaf. This allows one to describe an $L_\infty$ algebroid on a general manifold $X$ by choosing an affine cover $\{U_i\}\rightarrow X$, putting an $L_\infty$ algebroid $\cL_i$ on each $U_i$, a quasi-isomorphism $g_{ij}\colon \cL_i \to \cL_j$ on each double overlap $U_i \cap U_j$, a homotopy $h_{ijk}\colon g_{ij}g_{jk}g_{ki} \Rightarrow 1$ on each triple overlap $U_i \cap U_j \cap U_k$, etc., satisfying an approriate cocycle condition.   We will not use this statement in the paper

\begin{remark}If $X$ is a complex manifold or smooth algebraic variety that is not quasi-projective, then an $L_\infty$ algebroid on $X$ may not have an underlying global complex of vector bundles, essentially because coherent sheaves on $X$ may not have global resolutions by vector bundles.  However, if $\cL$ is a Lie 1-algebroid, it is defined by a global vector bundle, giving a Lie algebroid in the classical sense. \qed
\end{remark}

Given an $L_\infty$ algebroid on $X$, we obtain a quotient stack
\[
\pi\colon X \to \XL
\]
as in the affine case.  The derived global sections (\v{C}ech cohomology) of the structure sheaf $\cO_\XL$ are modelled by the cdga
\[
\mathbb{R}\Gamma(\XL,\cO_\XL) = \lim_{U \subset X} \cO([U/\cL|_U]).
\]
over affine open submanifolds of $X$. 

\begin{ex}
If $\cL=0$ is the trivial Lie algebroid, then
\[
\mathbb{R}\rsect(\XL,\cO_\XL) = \C^{\bullet}(X,\cO_X)
\]
is the complex of derived global sections of the structure sheaf (i.e.~the \v{C}ech cohomology).  \qed
\end{ex}

\begin{ex}
More generally, if $\cL$ is a Lie 1-algebroid, then there is no possibility of higher homotopies, and so the Chevalley--Eilenberg complexes on affines assemble into the usual sheaf of cdgas $\wedge^\bullet \cL^\vee$ on $X$.  In this case 
\[
\mathbb{R}\rsect(\XL,\cO_\XL) \cong \C^\bullet(X,\wedge^\bullet \cL^\vee)
\]
is the hypercohomology complex defining the Lie algebroid cohomology. \qed
\end{ex}

\section{Geometry of the quotient}
\label{sec:quotient-forms}
\subsection{Quasi-coherent sheaves}

Let $U$ be an affine manifold and let $\cL$ an $L_\infty$ algebroid over $U$. A complex of quasi-coherent sheaves on $\UL$ corresponds to a complex $\cE_0$ of quasi-coherent sheaves on $U$ that carries an $\cL$-action.  This action can equivalently be described by a differential on $\cE_0 \otimes \cO(\UL)$, making it into a dg module over the Chevalley--Eilenberg algebra; this is the approach of ``representations up to homotopy''~\cite{Abad2012}.  As is well known, there are many complexes (such as the tangent complex) that are only non-canonically isomorphic to a representation up to homotopy.  This motivates considering slightly more general objects, as follows.

Consider the ideal
\[
\cI = \ker \pi^* \subset \cO(\UL)
\]
of positive degree elements, and the induced filtration by subcomplexes
\[
\cO(\UL) \supset \cI \supset \cI^2\supset\cdots.
\]
The associated graded complex is canonically isomorphic to the symmetric algebra $\Sym(\cL^\vee[-1])$, but with differential induced by the original differential $\delta$ on the complex of vector bundles $\cL$, and all higher brackets forgotten.

Now let $\cE = (\cE^\bullet,\delta)$ be a dg module over $\cO(\UL)$. It inherits a filtration
\[
\cE \supset  \cI\cdot \cE \supset \cI^2 \cdot \cE \supset \cdots,
\]
and we denote by
\[
\pi^* \cE = \cE/\cI\cE \cong \cE \tens_{\cO(\UL)} \cO(U)
\]
the complex of $\cO(U)$-modules obtained by taking the zeroth graded piece.

\begin{defn}
A dg module $\cE$ is \defterm{quasi-coherent complex on $\UL$} if the natural map
\[
\pi^* \cE \tens_{\cO(U)} \cI^n/\cI^{n+1} \to \cI^n\cE/\cI^{n+1} \cE
\]
is an isomorphism for all $n$.  In this case, the complex $\pi^*\cE$ is called the \defterm{pullback of $\cE$ to $U$}.
\end{defn}

The point of the definition is that the dg-module $\cE$ becomes a representation up to homotopy once we make a non-canonical choice of splitting of the filtration.  We say that a morphism $\cE \rightarrow \cE'$ of dg-modules is a \defterm{filtered quasi-isomorphism} if it induces a quasi-isomorphism on associated gradeds.  We denote by $\QCoh(\UL)$ the $\infty$-category whose objects are quasi-coherent complexes on $\UL$, and whose morphisms are given by localizing the usual morphisms of dg modules at the filtered quasi-isomorphisms. 

The pullback defined above gives a functor
\[
\pi^* : \QCoh(\UL) \to \QCoh(U)
\]
and the following useful result is an immediate consequence of the definition:
\begin{lm}
Let $\cL$ be an $L_\infty$ algebroid on $U$, and suppose that
\[
\phi \colon  \cE \to \cE'
\]
is a morphism in $\QCoh(\UL)$.  Then $\phi$ is a weak equivalence if and only if the pullback
\[
\pi^*\phi \colon \pi^*\cE \to \pi^*\cE'
\]
is a quasi-isomorphism of complexes of $\cO(U)$-modules.
\label{lm:quasiiso}
\end{lm}

Evidently quasi-coherent complexes may be pulled back along open embeddings of affine manifolds.  Thus if $\cL$ is an $L_\infty$ algebroid on an arbitrary manifold $X$, we may define
\[
\QCoh(\XL) = \lim_{U \subset X} \QCoh([U/\cL|_U])
\]
where the limit is taken over all affine open submanifolds. Thus an object $\cE \in \QCoh(\XL)$ corresponds, on affine open set $U$, to  a dg module $\cE|_U$ over the Chevalley--Eilenberg algebra, and these modules are glued by filtered quasi-isomorphisms in a homotopy coherent way.

For a general morphism of $L_\infty$ algebroids $f \colon \cM \to \cL$ over bases $Y$ and $X$, there is a pullback functor
\[
f^* \colon \QCoh(\XL) \to \QCoh(\YM)
\]
which is given on affine subsets $U \subset X$ and $V \subset Y$ by the usual formula
\[
f^* \cE|_V = \cE|_U \tens_{\cO(\UL)} \cO(\VM).
\]

\subsection{The tangent and cotangent complexes}
\label{sec:tangent}
Suppose that  $\cL$ is an $L_\infty$ algebroid on an affine manifold $U$. The \defterm{tangent complex of $[U/\cL]$} is the module $\cT_{[U/\cL]}$ of graded derivations of $\cO(\UL)$, equipped with the differential defined by taking the commutator of derivations with the Chevalley--Eilenberg differential $\delta$.  See \cite{Crainic2008} for a detailed discussion in the case of Lie 1-algebroids, where the complex is called the deformation complex and shifted in degree by one from our convention.

Viewing elements of $\cL$ as derivations on $\cO(\UL)$ by interior contraction gives an inclusion $\cL[1] \to \cT_\UL$.  Meanwhile, restricting derivations to the degree zero component of $\cO(\UL)$ gives a projection $\cT_\UL \to \cT_U$.  Extending these maps linearly, we obtain an exact sequence
\begin{align}
\xymatrix{
0 \ar[r] & \cO(\UL) \utens{\cO(U)}  \cL[1] \ar[r] & \cT_\UL \ar[r] & \cO(\UL) \utens{\cO(U)} \cT_U \ar[r] & 0 
}\label{eqn:tan-exact}
\end{align}
of graded $\cO(\UL)$-modules that may by split by picking a connection on $\cL$.   We note that the submodule $\cO(\UL)\otimes \cL[1]$ is not a subcomplex; in general, it carries no natural differential.

From this exact sequence, we immediately obtain the following
\begin{prop}
\label{prop:tan-pullback}
The dg $\cO(\UL)$-module $\cT_\UL$ is quasi-coherent, and its pullback  along the projection $\pi\colon U \to [U/\cL]$ is given by
\begin{align*}
\pi^*\cT_{[U/\cL]} &= \rbrac{\xymatrix{\cdots \ar[r]^-{\d} &
\cL_1 \ar[r]^-{\d} & \cL_0 \ar[r]^-{a} & \cT_U
}}
\end{align*}
where $\cT_U$ sits in degree zero and $\cL_i$ sits in degree $-(i+1)$.
\end{prop}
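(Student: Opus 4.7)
The proof breaks into three steps, all made available by the short exact sequence \eqref{eqn:tan-exact} of graded $\cO(\UL)$-modules that is already in hand.

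First, I would pick any $\cL$-connection, which (as noted after the sequence) splits the exact sequence and produces a graded $\cO(\UL)$-module isomorphism
$$
\cT_\UL \;\cong\; \cO(\UL) \otimes_{\cO(U)} (\cL[1] \oplus \cT_U).
$$
Tensoring this with $\cO(\UL)/\cI^{n+1}$ and comparing with the analogous quotient by $\cI^n$ yields natural graded isomorphisms
$$
\cI^n \cT_\UL / \cI^{n+1} \cT_\UL \;\cong\; \cI^n/\cI^{n+1} \otimes_{\cO(U)} (\cL[1] \oplus \cT_U).
$$
Specialising to $n=0$ gives $\pi^* \cT_\UL \cong \cL[1] \oplus \cT_U$ as a graded $\cO(U)$-module, and substituting this back into the previous display exhibits $\cT_\UL$ as quasi-coherent. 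Although the connection-dependent splitting is non-canonical, the graded module $\cT_\UL$ and the ideal $\cI$ are intrinsic, so the associated graded isomorphisms just displayed are canonical.

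Second, the underlying graded structure of $\pi^*\cT_\UL$ already matches the proposition: $\cT_U$ sits in degree zero, while $\cL[1]$ places $\cL_i$ in degree $-(i+1)$. This is consistent with the inclusion $\cL[1] \to \cT_\UL$: for $x \in \cL_i$, interior contraction $\iota_x$ is a derivation of $\cO(\UL)$ of degree $-(i+1)$ because the dual generator $\cL_i^\vee \subset \cL^\vee[-1]$ sits in degree $i+1$.

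Third, I would identify the induced differential. By definition, the differential on $\cT_\UL$ is the graded commutator $[\delta, -]$ with the Chevalley--Eilenberg differential on $\cO(\UL)$. The weight-zero component of $\delta$ is zero, and its weight-one component is, by the formula recalled in \autoref{sect:Linfalgd}, precisely dual to the pair $(a \colon \cL_0 \to \cT_U,\ \d \colon \cL_\bullet \to \cL_{\bullet-1})$; all higher-weight components take values in $\cI\cdot\cO(\UL)$ and therefore contribute only to terms lying in $\cI\cT_\UL$, which vanish in the quotient. A direct evaluation of $[\delta,D]$ on the two generating subspaces $\cO(U)$ and $\cL^\vee[-1]$ of $\cO(\UL)$, modulo $\cI$, then shows that the induced differential on $\cL[1]\oplus\cT_U$ is exactly $a$ combined with $\d$, as required.

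The main obstacle is the bookkeeping in the third step: one must check with the correct signs that the weight-one part of $\delta$ produces the stated anchor and internal differential, and confirm that the higher-weight parts (encoding the higher brackets of $\cL$) truly lie in $\cI\cT_\UL$ and hence contribute nothing to the differential on the pullback. The first two steps are essentially a formal consequence of the given exact sequence and the definition of quasi-coherence.
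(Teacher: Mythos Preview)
Your proposal is correct and follows exactly the approach the paper takes: the paper simply states that the proposition follows ``immediately'' from the exact sequence \eqref{eqn:tan-exact} and the connection splitting, without spelling out any details. Your three steps are precisely the details one would fill in to justify that claim, and the sign/weight bookkeeping you flag in the third step is the only real work hidden behind the word ``immediately''.
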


\begin{remark}
This result has the usual geometric interpretation: at a point $p \in U$,  the zeroth cohomology of the fibre
\[
\coH^0(\pi^*\cT_U|_p) = T_pU/a(\cL_0|_p)
\]
is the normal space to the $\cL$-orbit of $p$, i.e.~the Zariski tangent space to the projection $\pi(p) \in \UL$.  Meanwhile, the negative cohomologies form a graded Lie algebra that represents the stabilizer of $p$ under the $\cL$-action.\qed
\end{remark}

The \defterm{cotangent complex of $\UL$} is similarly defined as the  dual $\cO(\UL)$-module $\cT_\UL^\vee$ of K\"ahler differentials.  The de Rham differential is given by the universal derivation
\[
\ddr\colon \cO(\UL) \to \coT_{\UL}
\]
which is, by construction, a morphism of complexes: $\d \ddr = \ddr \d$.

As for $\cO(\UL)$, the module $\coT_\UL$ has a bigrading by weight and degree.  Geometrically, it is the degree grading on $\coT_{\UL}$ that is more fundamental: the differential on $\coT_\UL$ has degree one,  and  it corresponds to the \v{C}ech cohomology $\coH^\bullet(\UL,\cT_\UL^\vee)$.

The weight grading, on the other hand, is not preserved by the differential.  Nevertheless, it is convenient when one wants explicit formulae.  Dualizing \eqref{eqn:tan-exact} and taking the weight-$n$ piece, we obtain the sequence
\begin{align}
\xymatrix{
0 \ar[r] & \wedge^n \cL^\vee \otimes \coT_U \ar[r] & (\coT_\UL)^{\bullet,n} \ar[r] & \wedge^{n-1}\cL^\vee \otimes \cL^\vee[-1]  \ar[r] & 0 
}\label{eqn:cotan-exact}
\end{align}
which has the following differential-geometric interpretation~\cite{AriasAbad2011,Li-Bland2015}:

\begin{prop}\label{prop:operators}
The weight-$n$ subspace of $\coT_\UL$ is canonically isomorphic to the $\cO(U)$-module of pairs $(\omega_n,\overline \omega_n)$, consisting of a first-order totally skew-symmetric polydifferential operator
\[
\omega_{n}\colon \underbrace{\cL \times \cdots \times \cL}_{n\textrm{ times}} \to \Omega^1(U)
\]
and a tensor
\[
\bomega_n \in \Sym^{n-1}(\cL^\vee[-1]) \otimes \cL[-1]
\]
related by the symbol equation
\begin{align}
\omega_n(x_1,\ldots,x_{n-1},fx_n) = f\omega_n(x_1,\ldots,x_n) + \bomega_n(x_1,\ldots,x_{n-1}|x_n) \cdot \ddr f \label{eqn:operator-symbol}
\end{align}
Here $\bomega_n(x_1,\ldots,x_{n-1}|x_n)$ denotes the canonical $\cO(U)$-linear pairing of $\bomega_n$ with $x_1x_2\cdots x_{n-1} \otimes x_n$.
\end{prop}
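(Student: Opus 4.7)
The plan is to unpack the short exact sequence \eqref{eqn:cotan-exact} in differential-geometric terms. The sequence comes from dualizing \eqref{eqn:tan-exact}, so I first recognise the two extremes concretely on generators of $\coT_\UL$. As an $\cO(\UL)$-module, $\coT_\UL$ is generated by de~Rham differentials of the generators of $\cO(\UL)$, namely $\ddr g$ for $g\in\cO(U)$ (weight $0$) and $\ddr\xi$ for $\xi\in\cL^\vee[-1]$ (weight $1$). The weight-$n$ component therefore splits (non-canonically) into a piece $u\cdot \ddr g$ with $u$ of weight $n$ -- these form the subspace $\wedge^{n}\cL^\vee \otimes \coT_U$ -- and a piece $v\cdot \ddr\xi$ with $v$ of weight $n-1$ -- which yield the quotient $\wedge^{n-1}\cL^\vee\otimes \cL^\vee[-1]$. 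Geometrically, the first kind of term is a zeroth-order $\cO(U)$-multilinear $\Omega^1(U)$-valued form on $\cL$, while the second is forced to be first-order.

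Given $\omega\in(\coT_\UL)^{\bullet,n}$, I would then define the pair $(\omega_n,\bomega_n)$ intrinsically, using the inclusion $\cL\hookrightarrow \cT_\UL[-1]$ as inner derivations. The symbol is
\[
\bomega_n(x_1,\dots,x_{n-1}\mid x_n) \;=\; \iota_{x_1}\cdots\iota_{x_{n-1}}\iota_{x_n}\,\omega \;\in\; \cO(U),
\]
where $\iota_{x_n}\colon \coT_\UL\to\cO(\UL)$ is the tautological duality pairing and the remaining $\iota_{x_i}$ act as derivations. This annihilates the $\wedge^n\cL^\vee\otimes\coT_U$ summand (because inner derivations kill $\cO(U)\subset\cO(\UL)$, so they kill $\ddr g$), and is by construction $\cO(U)$-multilinear and graded-symmetric in the first $n-1$ slots, recovering the cokernel in \eqref{eqn:cotan-exact}. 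The operator is
\[
\omega_n(x_1,\dots,x_n) \;=\; \iota_{x_1}\cdots\iota_{x_{n-1}}\,L_{x_n}\,\omega \;\in\; \Omega^1(U),
\]
which is a weight-$0$ $1$-form; the Cartan formula $L_{x_n} = \iota_{x_n}\ddr + \ddr\iota_{x_n}$ ensures that on the generator $\xi_1\cdots\xi_{n-1}\,\ddr\xi_n$ one picks up $\ddr\bigl(\xi_n(x_n)\bigr)$, while on $\xi_1\cdots\xi_n\,\ddr g$ one recovers $\xi_1\wedge\cdots\wedge\xi_n(x_1,\dots,x_n)\cdot \ddr g$.

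With $(\omega_n,\bomega_n)$ in hand, the symbol equation \eqref{eqn:operator-symbol} reduces to a one-line check: using $\iota_{fx}=f\iota_x$ and $L_{fx}\alpha = f L_x\alpha + \ddr f\wedge \iota_x\alpha$ on $1$-forms, applied at the innermost slot, the failure of $\cO(U)$-linearity in $x_n$ is exactly the $\ddr f$-term with coefficient $\iota_{x_1}\cdots\iota_{x_n}\omega = \bomega_n(x_1,\dots,x_{n-1}\mid x_n)$. Uniqueness and completeness of the correspondence then follow from the exactness of \eqref{eqn:cotan-exact}: the quotient map records $\bomega_n$, and any lift to a weight-$n$ $1$-form produces a compatible $\omega_n$; two lifts differ by an element of $\wedge^n\cL^\vee\otimes\coT_U$, which is precisely the freedom in choosing the $\cO(U)$-multilinear part. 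Conversely, a pair $(\omega_n,\bomega_n)$ satisfying \eqref{eqn:operator-symbol} assembles into a well-defined $\omega$ after choosing a linear connection on $\cL$ to split the sequence, and the resulting $\omega$ is independent of the splitting.

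The main bookkeeping obstacle is translating between the exterior notation $\wedge^{n-1}\cL^\vee\otimes\cL^\vee[-1]$ used in \eqref{eqn:cotan-exact} and the graded-symmetric notation $\Sym^{n-1}(\cL^\vee[-1])\otimes\cL[-1]$ in the statement; this amounts to keeping track of Koszul signs produced by the shifts $[-1]$, which interconvert symmetric and antisymmetric tensors and are what force $\bomega_n$ to be symmetric (rather than antisymmetric) in its first $n-1$ arguments once the shifts are absorbed. No new ideas are needed beyond careful application of the Koszul sign rule.
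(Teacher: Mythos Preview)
Your approach is correct and, at the level of content, runs parallel to the paper's proof: both identify the weight-$n$ piece of $\coT_\UL$ through its generators $u_1\cdots u_n\,\ddr f$ and $u_1\cdots u_{n-1}\,\ddr u_n$, and assign to a general element the pair $(\omega_n,\bomega_n)$. The paper does this by writing explicit formulas on monomials and leaving compatibility with the K\"ahler relation $\ddr(fu)=u\,\ddr f+f\,\ddr u$ to the reader; you instead define $\bomega_n$ and $\omega_n$ by the intrinsic operations $\iota_{x_1}\cdots\iota_{x_n}$ and $L_{x_1}\cdots L_{x_n}$ on $\coT_\UL$, which makes well-definedness automatic. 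Evaluating your formulas on the two kinds of monomials reproduces the paper's expressions verbatim, and your derivation of the symbol equation via $L_{fx}=fL_x+(\iota_x{-})\,\ddr f$ is exactly the computation the paper asks the reader to perform. So the two arguments are the same in substance; yours trades the explicit monomial bookkeeping for a coordinate-free definition.

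One point worth tightening: you overload $\iota_{x_i}$ to mean the duality pairing $\coT_\UL\to\cO(\UL)$ (innermost slot of $\bomega_n$), the inner derivation on $\cO(\UL)$, and its extension to $\coT_\UL$ (outer slots of $\omega_n$). These are mutually consistent, but only because the unique derivation extension of $\iota_x$ from $\cO(\UL)$ to $\coT_\UL$ \emph{is} the Lie derivative $L_{\iota_x}$; saying this explicitly would make clear that your $\iota_{x_1}\cdots\iota_{x_{n-1}}L_{x_n}$ is really $L_{x_1}\cdots L_{x_n}$, and hence that $\omega_n$ is graded-symmetric in all $n$ arguments (since $[L_{\iota_{x_i}},L_{\iota_{x_j}}]=L_{[\iota_{x_i},\iota_{x_j}]}=0$), which is what the statement requires.
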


\begin{proof}
The weight-$n$ subspace is evidently spanned by monomials of the form $u_1\cdots u_n\, \ddr f$ and $u_1\cdots u_{n-1}\, \ddr u_n$ with $f \in \cO(U)$ and $u_i \in \cL^\vee$.  We simply explain how to assign a pair $(\omega_n,\bomega_n)$ to such monomials, and leave to the reader the straightforward check that the map gives a well-defined isomorphism, e.g.~that is compatible with the fundamental relation
\[
\ddr(fu) = (\ddr f)u+f(\ddr u)
\]
for K\"ahler differentials.

Firstly, for the monomial $u_1\cdots u_n\,\ddr f$, we set $\bomega_n = 0$, so that the operator $\omega_n\colon \cL^{\times n} \to \Omega^1(U)$ must be $\cO(U)$-multilinear.  We then define $\omega_n$ by interpreting $\omega$ as a monomial in $\Sym^n(\cL^\vee[-1]) \otimes \Omega^1(U)$.

Secondly, for a monomial $\omega = u_1\cdots u_{n-1} \, \ddr u_n$ we set
\[
\bomega_n = u_1\cdots u_{n-1} \otimes u_n
\]
and define the operator $\omega_n$ by the formula
\begin{align*}
\omega_{n}(x_1,\ldots,x_n) &= \sum_{i=1}^{n}(-1)^{\epsilon} \abrac{ u_1\cdots u_{n-1} , x_1\cdots \widehat{x_i}\cdots x_n} \ddr \abrac{u_n,x_i}
\end{align*}
where the sign $\epsilon$ is determined by the Koszul sign rule.
\end{proof}

Given an element $\omega \in \coT_\UL$, we write $\omega = (\omega_n,\bomega_n)_{n \ge 0}$ to indicate the sequence of pairs obtained by applying \autoref{prop:operators} to all of the weight components of $\omega$.  The differential and de Rham derivative can now be described explicitly in terms of brackets, extending the formulae for classical Lie algebroids~\cite{Abad2012}. We describe the idea and leave the verification of the formulae  as an exercise to the reader.

Firstly, if  $u \in \cO(\UL)$, we use the construction in \autoref{prop:operators} to deduce that the weight-$n$ part of its de Rham differential $\ddr u \in \coT_\UL$ is
\begin{align*}
(\ddr u)_n(x_1,\ldots,x_n) &= \ddr (u_n(x_1,\ldots,x_n)) \\
\overline{(\ddr u)}_n(x_1,\ldots,x_{n-1}|x_n) &= u_n(x_1,\ldots,x_n).
\end{align*}
Secondly, the fact that $\ddr$ is a morphism of complexes results in the following formula for the differential of $\omega = (\omega_n,\bomega_n)$:
\begin{align*}
(\d \omega)_n(x_1,\ldots,x_n) &=  (\dCE \omega)(x_1,\ldots,x_n) \\
&\ \ \ + \sum_{i=1}^n (-1)^\epsilon \lie{a x_i} \omega_{n-1}(x_1,\ldots,\widehat{x_i},\ldots,x_n) \\
\overline{(\d \omega)}_n(x_1,\ldots,x_{n-1}|x_n) &= (\dCE \overline{\omega})(x_1,\ldots,x_{n-1}|x_n) \\
&\ \ \ + \sum_{i=1}^{n-1} (-1)^\epsilon \lie{ax_i} \bomega_{n-1}(x_1,\ldots,\widehat{x_i},\ldots,x_{n-1}|x_n) \\
&\ \ \ + (-1)^\epsilon\iota_{ax_n}\omega_{n-1}(x_1,\ldots,x_{n-1}).
\end{align*}
where $\dCE \omega$ is defined by the same formula as in  \eqref{eqn:CEterm}, and $\dCE \bomega$ is defined similarly, but with the additional constraint that $x_n$ always appears at the end, i.e.~we sum over terms of the form $\overline{\omega}_j([x_{\sigma_1},\ldots,x_{\sigma_i}],x_{\sigma_{i+1}},\ldots,x_{\sigma_{n-1}}|x_n)$ and terms of the form  $\overline{\omega}_j(x_{\sigma_1},\ldots,x_{\sigma_{j-1}}|[x_{\sigma_{j}},\ldots,x_{\sigma_{n-1}},x_n])$.

\subsection{Differential forms}
\label{sec:forms}
With the cotangent complex in hand, we can now describe the algebra of differential forms. Let $\cL$ be an $L_\infty$ algebroid on an affine manifold $U$.  Then the $p$-forms on $[U/\cL]$ are given by $p$th exterior power
\[
\Omega^{p}(\UL) = \wedge^p\, \cT^\vee_{[U/\cL]} = \rbrac{\Sym^p_{\cO(\UL)}\rbrac{\cT^\vee_{[U/\cL]}[-1]}}[p]
\]
Thus the $p$-forms are a complex in their own right.  We write $\Omega^{p,q}(\UL)$ for the $p$-forms of degree $q$.  In particular, we have
\[
\Omega^{p,0}(\UL) = \Omega^p(U),
\]
and the projection
\[
\pi^*\colon \Omega^{p,\bullet}(\UL) \to \Omega^p(U)
\]
models the pullback of forms along the quotient map $\pi\colon U \to \UL$.

As always, the internal differential
\[
\delta\colon \Omega^{p,\bullet}(\UL) \to \Omega^{p,\bullet+1}(\UL)
\]
should be seen as analogous to the  \v{C}ech differential for the sheaf of $p$-forms on the quotient stack $\UL$.
\begin{remark}
Suppose that $\cL$ is a Lie 1-algebroid and $\hat{\cG}_\bullet$ is the nerve of the formal groupoid integrating $\cL$. Then one can identify $\coH^q([U/\cL], \Omega^p)$ with $\coH^q(\Omega^p(\hat{\cG}_\bullet))$. For a \emph{Lie} groupoid $\cG$, there is a natural ``van Est'' differentiation map~\cite{AriasAbad2011,Mehta2006} from $\Omega^p(\cG_\bullet)$ to $\Omega^{p,\bullet}(\UL)$, and it is shown in \cite{AriasAbad2011} that this map is an isomorphism on cohomology, under certain connectivity assumptions on the source fibres.  (We remark that $\Omega^{i,j}(\UL)$ was denoted by $W^{j,i}$ in that paper.)  Since the fibers of a formal groupoid $\hat{\cG}$ are, in a sense, contractible, we expect this identification to hold for arbitrary formal groupoids.
\qed
\end{remark}

The de Rham differential on functions extends to a morphism of complexes
\[
\ddr\colon \Omega^{p,\bullet}(\UL) \to \Omega^{p+1,\bullet}(\UL),
\]
in the usual way.  Thus $\Omega^{\bullet,\bullet}(\UL)$ is a bigraded bidifferential algebra, playing the role of the full Hodge diamond of $\UL$.

\begin{defn}
Let $X$ be an arbitrary manifold, and let $\cL$ be an $L_\infty$ algebroid on $X$.  The \defterm{algebra of differential forms on $\XL$} is the bigraded bidifferential algebra
\[
\Omega^{\bullet,\bullet}(\XL) = \lim_{U\subset X} \Omega^{\bullet,\bullet}([U/\cL|_U])
\]
obtained by taking the limit over all affine open subsets of $X$.
\end{defn}

\subsection{Closed forms}

Unlike forms on a manifold, for which being closed is a property, in the derived or stacky settings, closure is extra data: we ask for the $p$-form $\omega$ to satisfy the equation $\ddr \omega = 0$ only up to higher homotopies.  This condition is  phrased most succinctly by analogy with the  Poincar\'e lemma.  On a $C^\infty$ manifold $X$, the sheaf of closed differential forms of degree at least $p$ has a natural resolution by acyclic sheaves, given by its inclusion  in the complex $(\Omega^{\ge p}_X,\ddr)$.  In the algebraic setting, or on a quotient $\XL$, the Poincar\'e lemma will typically fail, but we may declare that closed $p$-forms are described by the total complex
\[
\Omega^{\ge p}(\XL) = \Tot\rbrac{\xymatrix{
\Omega^{p,\bullet}(\XL) \ar[r]^-{\ddr} & \Omega^{p+1,\bullet}(\XL)  \ar[r]^-{\ddr} &  \cdots
}
}
\]
\begin{defn}
Let $X$ be a manifold and $\cL$ an $L_\infty$ algebroid on $X$. A \defterm{closed $(p,q)$-form on $\XL$} is a cocycle
\[
\omega \in \Z^{p+q} \Omega^{\ge p}(\XL)
\]
\end{defn}

Thus a closed $(p,q)$-form consists of whole sequence of forms
\[
\omega_p,\omega_{p+1},\omega_{p+2},\ldots
\]
where $\omega_{p+j} \in \Omega^{p+j,q-j}(\XL)$, and these data satisfy the equations
\begin{align*}
\d\omega_p &= 0 \\
\ddr \omega_p + \d \omega_{p+1} &= 0 \\
\ddr \omega_{p+1} + \d \omega_{p+2}&= 0 \\
&\ \  \vdots
\end{align*}

Since closed $(p,q)$-forms are cocycles in a complex, there is a natural notion of homotopy equivalence between them, given by  coboundaries.  Then there are homotopies of homotopies, etc., so that closed forms naturally form a higher groupoid:
\begin{defn}\label{def:space-of-closed}
The \defterm{space of closed $(p,q)$-forms on $\XL$} is the simplicial set $|\Omega^{\ge 2,q}(\XL)|$  assigned to the truncated complex $\tau^{\le p+q} \Omega^{\ge p}(\XL)$ by the Dold--Kan correspondence.
\end{defn}

Similar to the one-form case described in \autoref{prop:operators}, one can describe arbitrary forms by decomposing them into weight components that are tensors and differential operators.  But giving a complete description the space of closed $(p,q)$-forms in this decomposition is quite cumbersome: for example, the closed $(2,2)$-forms that we will focus on later have nine distinct components, satisfying  15 different equations.  One must then account for their higher homotopies, which give several more components and equations.

Fortunately, it turns out that most of the information in a closed form is actually redundant.   For example, \cite[Proposition 4.12]{Li-Bland2015} can be interpreted as saying that that the pullback of forms gives an isomorphism of the de Rham cohomologies
\[
\pi^*\colon \coH^\bullet_{\textrm{dR}}(\XL) \to \coH^\bullet_{\textrm{dR}}(X),
\]
This is consistent with the idea that the quotient map $\pi\colon X \to \XL$ expresses $\XL$ as a formal neighbourhood of $X$; hence they have the same topology.  We now explain how to extend this approach to give an efficient model for the closed $p$-forms.

It is enough to describe the construction when $\cL$ is an $L_\infty$ algebroid on an affine manifold $U$.   Consider the canonical Euler derivation $\xi \in \cT^{0}_\XL$ that multiplies a homogeneous element of $\cO(\UL)$ by its degree.  It gives a homotopy operator
\[
h\colon \Omega^{\bullet,\bullet}(\UL) \to \Omega^{\bullet-1,\bullet}(\UL)
\] 
via the interior contraction
\[
h(\omega) = \begin{cases}
\tfrac{1}{q}\iota_\xi \omega & \omega \in \Omega^{\bullet,q}(\UL), q>0 \\
0 & \omega \in \Omega^{\bullet,0}(\UL) = \Omega^\bullet(U).
\end{cases}
\]
Then using the Cartan formula $\lie{\xi} = \ddr \iota_\xi + \iota_\xi \ddr$, we see that every strictly $\ddr$-closed element of $\Omega^{\bullet,>0}(\UL)$ is actually $\ddr$-exact.  More precisely, we may use the homotopy to define the \defterm{complex of potentials}
\[
\Pot^{p-1,\bullet}(\UL) = \img h \subset \Omega^{p-1,\bullet}(\UL) 
\]
with differential defined by
\begin{align}
\dpot = h \d \ddr = -h \ddr \d = (\ddr h  - 1) \d. \label{eqn:dpot}
\end{align}
Then $h$ and $\ddr$ give mutually inverse isomorphisms between the complex of potentials, and the complex of strictly closed $p$-forms of positive degree.

There is also a natural \defterm{twisting map}
\[
\twist\colon \Omega^{\ge p}(U) \to \Pot^{p-1}(\UL)[1]
\]
defined as follows.  Given an arbitrary element $G \in \Omega^{p+q}(U)$, we may contract it with the  exterior power $\wedge^{q+1}a$ of the anchor $a \in \cL_0^\vee \otimes \cT_U$ to obtain the element
\[
\twist G = \iota_{\wedge^{q+1} a} G \in \wedge^{q+1}\cL_0^\vee \otimes \Omega^{p-1}
\]
which we view as a $(p-1)$-form whose coefficient lies in $\wedge^{q+1} \cL_0^\vee \subset \cO(\UL)$.
\begin{defn}
The  \defterm{normalized complex of closed $p$-forms} is the complex
\[
\Omcl^{p}(\UL) = \Pot^{p-1}(\UL) \oplus \Omega^{\ge p}(U)
\]
with differential given by
\[
\dtw = \begin{pmatrix}
\dpot & \twist \\
0 & \ddr
\end{pmatrix}\colon  \xymatrix{{\begin{matrix}\Pot^{p-1}(\UL) \\ \oplus \\ \Omega^{\ge p}(U)\end{matrix}}\ar[r] & {\begin{matrix}\Pot^{p-1}(\UL) \\ \oplus \\ \Omega^{\ge p}(U).\end{matrix}}}
\]
\end{defn}
Thus the normalized complex fits in an exact sequence
\[
\xymatrix{
0 \ar[r] & \Pot^{p-1}(\UL) \ar[r] & \Omcl^{p}(\UL) \ar[r] & \Omega^{\ge p}(U) \ar[r] & 0.
}
\]
Although the normalized complex contains no elements in $\Omega^{p+j,q-j}(\UL)$ for $0 < j < q$, it still captures the full complexity of  homotopy closed forms:
\begin{thm}\label{thm:reduced}
The normalized complex so-defined is, indeed, a complex.  Moreover, there is a canonical homotopy equivalence
\[
\Omega^{\ge p}(\UL) \cong \Omcl^{p}(\UL)
\]
compatible with the projections to $\Omega^{\ge p}(U)$.
\end{thm}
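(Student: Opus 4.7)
The plan is to prove both claims simultaneously via the homological perturbation lemma (HPL), applied to the bicomplex $\Omega^{\bullet,\bullet}(\UL)$ with its two anticommuting differentials $\delta$ and $\ddr$.

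First I would exhibit a strong deformation retract for the unperturbed differential $\ddr$. Define the inclusion
\[
I\colon \Omega^{\ge p}(U) \oplus \Pot^{p-1}(\UL) \to \Omega^{\ge p}(\UL), \qquad (G, \alpha) \mapsto G + \ddr \alpha,
\]
the projection
\[
P\colon \Omega^{\ge p}(\UL) \to \Omega^{\ge p}(U) \oplus \Pot^{p-1}(\UL), \qquad \omega \mapsto (\omega^{\bullet, 0}, h \omega^{p, >0}),
\]
where $\omega^{\bullet,0}$ is the internal-degree-zero part and $\omega^{p,>0}$ is the bidegree-$(p, >0)$ part, and the chain homotopy $K\colon \Omega^{\ge p}(\UL) \to \Omega^{\ge p}(\UL)$ to be $-h$ on components of exterior degree strictly greater than $p$ and positive internal degree, and zero elsewhere. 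Using the Euler identity $\ddr h + h \ddr = \id$ on positive internal degree together with $h^2 = 0$, the side conditions $PI = \id$, $IP - \id = \ddr K + K \ddr$, $K^2 = KI = PK = 0$ all follow by direct calculation.

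Now apply HPL with perturbation $\delta$. The induced differential on the small complex is $(\ddr \oplus 0) + P \delta (1 - K \delta)^{-1} I$. Nilpotency of $K \delta$ is automatic: each iteration strictly decreases the exterior degree while $K = -h$ is active, and terminates as soon as the exterior degree reaches $p$, where $K \equiv 0$. A block-by-block computation then yields: on $\Pot^{p-1}$ the series truncates after one step to $P \delta \ddr \alpha = h \delta \ddr \alpha = \dpot \alpha$; on $\Omega^{p+q}(U) \subset \Omega^{\ge p}(U)$ the only surviving contribution is the final one, $P \delta (K \delta)^q G = \pm (h \delta)^{q+1} G \in \Pot^{p-1, q+1}(\UL)$; and the $\Pot \to \Omega^{\ge p}(U)$ and diagonal $\Omega^{\ge p}(U) \to \Omega^{\ge p}(U)$ off-block corrections vanish because $\delta$ always lands in positive internal degree. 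Thus HPL delivers simultaneously that $\dtw^2 = 0$ (so $\Omcl^p(\UL)$ is indeed a complex) and that $I$ extends to a canonical homotopy equivalence $\Omcl^p(\UL) \cong \Omega^{\ge p}(\UL)$ compatible with the projections to $\Omega^{\ge p}(U)$.

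It remains to identify the induced map with $\twist$, i.e.\ to establish $(h\delta)^{q+1} G = \pm \iota_{\wedge^{q+1} a} G$ for $G \in \Omega^{p+q}(U)$. I would argue by induction on $q$. The base case $q = 0$ is a short computation: writing $G$ locally as a sum of terms $g\,\ddr f_1 \wedge \cdots \wedge \ddr f_p$, using $\delta \ddr = -\ddr \delta$ and the explicit formula $\delta f = \sum_i u_i \lie{a x_i} f$ for $f \in \cO(U)$, only the $\ddr u_i$ contributions survive under $\iota_\xi$ (since $\iota_\xi \ddr f = 0$ whereas $\iota_\xi \ddr u_i = u_i$), and these reassemble into $\iota_a G$. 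The inductive step applied to $\iota_{\wedge^q a} G = \sum_I u_I \alpha_I$ uses the Leibniz rule: terms of the form $(\delta u_I) \alpha_I$ contain no $\ddr u$ factor and are thus killed by $\iota_\xi$, so only the contribution $\sum_I u_I \cdot \delta \alpha_I$ remains, and applying the base-case identity to each $\iota_\xi \delta \alpha_I$ produces exactly one additional contraction with the anchor. The main obstacle is precisely this inductive identification: the derivation $\delta$ produces many extra terms, including contributions from the higher brackets on $\cL$, and one must carefully verify that all such terms lie in $\ker \iota_\xi = \img h$ and so do not contribute. Once this last identification is in place, the theorem follows.
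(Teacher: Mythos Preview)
Your proposal is correct and follows essentially the same route as the paper's proof: set up a special deformation retract for the unperturbed differential $\ddr$ (with the same inclusion, projection, and homotopy $h$ coming from the Euler derivation), apply the homological perturbation lemma with perturbation $\delta$, and then identify the induced differential block by block, finishing with the inductive computation that $(h\delta)^{q+1}$ acts as contraction with $\wedge^{q+1}a$ on $\Omega^{p+q}(U)$. The only cosmetic differences are your sign convention $K=-h$ and your slightly more explicit bookkeeping of the block decomposition; the substantive steps, including the key observation that the $(\delta u)\cdot\alpha$ terms are annihilated by $\iota_\xi$, match the paper exactly.
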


\begin{proof} Consider first the case in which the differential and the $L_\infty$ algebroid structure on $\cL$ are trivial, so that $\delta$, $\delta_\Pot$ and $\twist$ are all zero.  Then the theorem holds.  Indeed, our discussion above shows that the homotopy operator $h$ gives a special deformation retract
\[
\xymatrix{
 (\Omega^{\ge p}(\UL),\ddr)\ar@/^2pc/[rr]^-{p} & & (\Pot^{p-1}(\UL) \oplus \Omega^{\ge p}(U), \d_0) \ar@/^2pc/[ll]^{i}.
}
\]
where the differential $\d_0$ on the right acts only on $\Omega^{\ge p}(U)$, where it is given by the de Rham differential.  The  projection $p$ is induced by the homotopy $h\colon \Omega^{p}(\UL) \to \Pot^{p-1}(\cL)$ and the projection $\Omega^{\ge p}(\UL) \to \Omega^{\ge p}(U)$.  Meanwhile the inclusion $i$ is induced by $\ddr \colon \Pot^{p-1}(\cL) \to \Omega^{p}(\UL)$ and the inclusion $\Omega^{\ge p}(U) \to \Omega^{\ge p}(\UL)$.  

We now consider the general case as a perturbation of this one.  That is, given a nontrivial $L_\infty$ algebroid structure, we view the  total differential $\d+\ddr$ on $\Omega^{\ge p}(\UL)$ as a perturbation of the de Rham differential $\ddr$.  By the Homological Perturbation Lemma~\cite{Brown1965,Gugenheim1972} (see also \cite{Crainic2004a}), the operation
\[
\delta' = \d_0 + p(1-\d\,h)^{-1}\d i
\]
defines a differential on $\Omcl^p(\UL)$, so that the projection
\[
p' = p(1+(1-\d\,h)^{-1}\d h)\colon \Omega^{\ge p}(\UL)  \to \Omcl^{p}(\UL)
\]
and the inclusion
\[
i'  = (1+h(1-\d  h)^{-1}\d) i\colon  \Omcl^{p}(\UL) \to  \Omega^{\ge p}(\UL)
\]
continue to give a special deformation retract. 

 Since $h$ acts by zero on $\Omega^{\bullet}(U)$, it is clear that $p'$ intertwines the projections of $\Omcl^p(\UL)$ and $\Omega^{\ge p}(\UL)$ to $\Omega^{\ge p}(U)$.  Thus to prove the theorem,  it suffices to verify that the differential $\delta'$ is precisely the twisted differential $\dtw$ described above.  To see this, suppose first that $\alpha \in \Pot^{p-1}(\UL)$.  Considering the bidegrees, we find
\begin{align*}
\delta' \alpha &= \delta_0\alpha + p(1-\d h)^{-1}\d \ddr \alpha \\
&= 0 + (h \d + (h \d)^2 + \cdots ) \ddr \alpha \\
&= h  \d \ddr \alpha \\
&= \dpot \alpha 
\end{align*}
so that $\delta$ and $\dtw$ agree on $\Pot^{p,\bullet}(\cL)$.

Meanwhile, if  $G \in \Omega^{p+q}(U)$, bidegree considerations give
\begin{align*}
\delta G &= \ddr G + p(1+\d h + (\d h)^2+\cdots)\d G \\
&= \ddr G + p (\d h)^q \d G \\
&= \ddr G + (h \d)^{q+1} G,
\end{align*}
where we have used the fact that $p$ acts as the homotopy $h$ on $\Omega^p(\UL)$.   We claim that the operator $(h\d)^{q+1}$  is precisely the twisting map $\twist$.  Indeed, considering the definition of $\twist$, it is enough by induction to show the operator $h\d$ acts on the subspace
\[
\wedge^\bullet\cL_0^\vee \otimes \Omega^\bullet(U) \subset \cO(\UL) \cdot \Omega(U) \subset \Omega(\UL)
\]
by  wedging and contracting with the anchor $a \in \cL_0^\vee\otimes \cT_U$.  But this is straightforward: given an element $u \in \cO(\UL)$ and $f_1,\ldots,f_k \in \cO(U)$, we compute
\begin{align*}
\d( u\, \ddr f_1 \, \cdots \, \ddr f_k) &= \d u\, \ddr f_1\cdots \ddr f_k \\
&\ \ \ + \sum_{j=1}^k (-1)^{\epsilon} u\,\ddr(\d f_j)\ddr f_1\cdots\widehat{\ddr f_j}\cdots\ddr f_k.
\end{align*}
using the fact that $\d $ and $\ddr$ commute.  By definition, $h$ annihilates the first term completely.  Meanwhile, we have the identity
\[
h\ddr(\d f_j) = \d f_j = a^\vee(\ddr f_j)
\]
relating the differential and the anchor.   We conclude that
\begin{align*}
h\d( u\, \ddr f_1 \, \cdots \, \ddr f_k) &=  \sum_{j=1}^k (-1)^{\epsilon} (u \cdot  a^\vee(\ddr f_i))\ddr f_1\cdots\widehat{\ddr f_j}\cdots\ddr f_k
\end{align*}
is the contraction with the anchor, as desired.
\end{proof}

The operator $\dpot = \ddr h \d-\d$ can be written explicitly in terms of its weight components by combining the formula for $\delta$ with an explicit formula for $\ddr h$.  We shall only need the action of $\ddr h$ on one-forms in the paper:
\begin{ex}
Consider a monomial $\alpha = u_1 \cdots u_j \ddr v$ with $u_1,\ldots,u_j,v \in \cL^\vee[-1]$.  Using the fact that the Euler derivation has total degree $-1$, we see that
\[
h \alpha =  \frac{(-1)^{|\alpha|-|v|}}{|\alpha|}   u_1\cdots u_j \cdot v.
\]
Applying $\ddr$ to this expression, and converting it back into operators as in \autoref{prop:operators}, we get the following formula for the action of $\ddr h$ on an arbitrary element $\alpha \in \Omega^1(\UL)$, not just monomials:
\begin{align*}
\overline{(\ddr h \alpha)}_n(x_1,\ldots,x_{n-1}|x_n) &= \frac{1}{|\alpha|}  \sum_{i=1}^{n} (-1)^\epsilon \overline{\alpha}_n(x_1,\ldots,x_{i-1},x_n,x_{i+1},\ldots,x_{n-1}|x_i) 
\end{align*}
and
\begin{align*}
(\ddr h \alpha)_n(x_1,\ldots,x_n) = \frac{1}{|\alpha|} \ddr\rbrac{ \sum_{i=1}^{n} (-1)^\epsilon \overline{\alpha}_n(x_1,\ldots,x_{i-1},x_n,x_{i+1},\ldots,x_{n-1}|x_i) }
\end{align*}
where the Koszul sign is determined by treating $|$ as a degree one symbol.

Thus the effect of the operator $\ddr h$ on $\Omega^1(\UL)$ is to apply an appropriate symmetrization to the tensorial part, and then adjust the operator component by an exact term that has the correct symbol. \qed
\end{ex}

\section{Shifted symplectic structures}

\subsection{Shifted symplectic forms}

The notions of shifted symplectic and Lagrangian structures on derived stacks~\cite{Pantev2013} can now be adapted to our context. Given a cocycle $\omega \in \Z^q( \Omega^{2,\bullet}(\XL))$, i.e.~a global two-form of degree $q$, we obtain a morphism
\[
\omega\colon \cT_\XL \to \coT_\XL[q],
\]
by interior contraction.  We say that $\omega$ is \defterm{nondegenerate} if this map is a quasi-isomorphism, i.e.~an isomorphism in $\QCoh(\XL)$.

\begin{defn}
Let $X$ be a manifold, and let $\cL$ be an $L_\infty$ algebroid on $X$. A \defterm{$q$-shifted symplectic structure on $\XL$} is a closed $(2,q)$-form
\[
\omega \in \Z^{2+q}\Omega^{\ge 2}(\XL)
\]
whose underlying two-form is nondegenerate in the above sense. 
\end{defn}

Let us describe the nondegeneracy condition more explicitly for affine manifolds $U$.   By \autoref{lm:quasiiso} and \autoref{prop:tan-pullback}, a two-form $\omega$ is nondegenerate if and only if its pullback  induces a quasi-isomorphism of complexes of vector bundles on $U$:
\begin{align}
\begin{gathered}
\xymatrix{
\pi^*\cT_\UL \ar[d]^{\pi^*\omega} &
\cdots \ar[r] &  \cL_{q-1} \ar[r]^{\d}\ar[d]^{\omega}   & \cL_{q-2} \ar[r]^{\d}\ar[d]^{\omega} & \cdots \ar[r]^-{a} & \cT_U \ar[r] \ar[d]^{\omega}& 0\\
 \pi^*\coT_\UL[q] &
0 \ar[r] & \coT_U \ar[r]^-{a^\vee} & \cL_{0}^\vee \ar[r]^-{\d} &\cdots \ar[r]^-{\d} &  \cL_{q-1}^\vee \ar[r] & \cdots
 }
\end{gathered} 
 \label{eqn:omegapullback}
\end{align}
Here the vertical maps are obtained by picking out appropriate tensorial components from the weight decomposition of $\omega$.

Notice that the top complex  is bounded on the right, while the bottom complex is bounded on the left.  The existence of a quasi-isomorphism therefore puts an obvious bound on the amplitude of $\cL$:
\begin{lm}\label{lm:symp-ampl}
Suppose that $\UL$ admits a $q$-shifted symplectic structure for $q > 0$.  Then the natural truncation map $\cL \to \tau^{>(-q)}\cL$ is a quasi-isomorphism of complexes of vector bundles.  Hence $\cL$ is equivalent to a Lie $(q-1)$-algebroid.
\end{lm}
\begin{proof} 
By homotopy transfer (\autoref{thm:homotopytransfer}), it is enough to show that we have a quasi-isomorphism of complexes of vector bundles.  The argument is standard: let us denote the truncation by
\[
\cL' = \rbrac{\xymatrix{
0 \ar[r]&  \cL_{q-1}/\delta \cL_q \ar[r] & \cL_{q-2} \ar[r] & \cdots \ar[r] & \cL_0
}}.
\]
Considering the quasi-isomorphism \eqref{eqn:omegapullback}, we see that the cohomology of $\cL$ vanishes in degree less than $-(q-1)$, and hence the natural projection $\cL \to \cL'$ is a quasi-isomorphism of complexes of $\cO(U)$-modules.  It remains to see that $\cL_{q-1}/\delta \cL_q$ is actually a vector bundle (a projective module), which is equivalent to $\mathrm{Ext}^i(\cL_{q-1}/\delta \cL_q,-) = 0$ for $i > 0$.  This vanishing follows easily from the above quasi-isomorphism and the fact that $\cL_{q-2},\ldots,\cL_0$ are vector bundles.  
\end{proof}

 For a fixed $L_\infty$ algebroid $\cL$ on a manifold $X$, the \defterm{space of $q$-shifted symplectic forms} is the full simplicial subset
\[
\symp_q(\XL) \subset |\Omega^{\ge 2,q}(\XL)|
\]
whose zero-simplices are $q$-shifted symplectic forms, where $|\Omega^{\ge 2,q}(\XL)|$ is the space of closed two-form from \autoref{def:space-of-closed}.

Symplectic structures may be pulled back along $L_\infty$ quasi-isomorphisms, so that the assignment $\cL \mapsto \symp_q(\XL)$ is functorial.   Hence the simplicial sets $\symp_q(\XL)$ for varying $\cL$ may be assembled to give a single $\infty$-groupoid:
\begin{defn}
The \defterm{space of $q$-shifted symplectic algebroids on $X$} is the simplicial set $\Symp_q(X)$ obtained by applying the Grothendieck construction to the functor $\symp_q\colon \LA(X)^{op} \to \SSet$.
\end{defn}

 Thus an object of the $\infty$-groupoid  $\Symp_q(X)$ is an $L_\infty$ algebroid $\cL$ equipped with a shifted symplectic form $\omega$. Meanwhile, a morphism  $(\cL,\omega) \to (\cL',\omega')$ is an $L_\infty$-quasi-isomorphism $f\colon \cL \to \cL'$ together with a coboundary that trivializes the cocycle $f^*\omega' - \omega \in \Omega^{\ge 2,q}(\XL)$, and similarly for higher homotopies.

More explicitly, suppose $X$ is affine. Then an $n$-simplex in $\Hom(\cL, \cL')$ is represented by a morphism of $\Omega_n$-linear commutative dg algebras \[\cO([X/\cL'])\otimes \Omega_n\to \cO([X/\cL])\otimes \Omega_n.\] Passing to $\Omega_n$-linear de Rham complexes, we obtain an $\Omega_n$-linear morphism \[\Omega^{\geq 2}([X/\cL'])\otimes \Omega_n\to \Omega^{\geq 2}([X/\cL])\otimes \Omega_n\]
and the required data is a homotopy between the pullback of $\omega'\otimes 1$ and $\omega\otimes 1$.

\subsection{Examples}

We now give some simple examples of Lie algebroids equipped with shifted symplectic structures.

\subsubsection{Zero-shifted symplectic algebroids}
\label{sec:0shift}
Arguing as in \autoref{lm:symp-ampl}, we easily see that a zero-shifted symplectic algebroid must be quasi-isomorphic to a single vector bundle concentrated in degree zero, hence a classical Lie algebroid $\cL$.

The only piece of data underlying a zero-shifted symplectic form on $\XL$ is a two-form $B \in \Omega^{2,0}(\XL) = \Omega^2(X)$.  By \autoref{thm:reduced}, the closure conditions amount to the equations
\begin{align*}
\ddr B &= 0 \in \Omega^3_X & 
\twist B = \iota_a B &= 0 \in \cL^\vee \otimes \Omega^1_X,
\end{align*}
and the nondegeneracy condition is that we have a quasi-isomorphism
\begin{align}
\begin{gathered}
\xymatrix{
\pi^*\cT_\XL \ar[d]^{\pi^*\omega} &  \cL\ar[d] \ar[r]^-{a} & \cT_X \ar[r]\ar[d]^-{B}& 0\ar[d] \\
 \pi^*\coT_\XL & 0 \ar[r] & \coT_X \ar[r]^-{a^\vee} & \cL^\vee .
 }
 \end{gathered}
 \label{eqn:0shift-iso}
\end{align}
 Considering the cohomology in degree one, we see that $a^\vee$ must be surjective; equivalently, the anchor map embeds $\cL$ in $\cT_X$ as the tangent bundle of a regular foliation.  Then the condition $\iota_a B = 0$ means that $B \in \wedge^2 (\cT_X/\cL)^\vee \subset \Omega^2(X)$ is a two-form in the directions transverse to the foliation.  Moreover, considering the cohomology of \eqref{eqn:0shift-iso} in degree zero, we see that the nondegeneracy of the zero-shifted symplectic structure is equivalent to the nondegeneracy of $B$ in the transverse directions.
  
Finally, the condition $\ddr B = 0$ means that $B$ is closed in the transverse directions, and also that it is invariant along the foliation.  Thus we conclude that a zero-shifted symplectic Lie algebroid on $X$ is simply a regular foliation of $X$, equipped with an invariant transverse symplectic structure, i.e.~a classical symplectic structure on the leaf space  $\XL$. 

\subsubsection{Transitive shifted symplectic algebroids}
\label{sec:symplectic-transitive}
Now consider a Lie algebroid $\cL$ on a manifold $X$, and assume that $\cL$ that is \defterm{transitive}, i.e.~its anchor map is surjective.  Thus $\cL$ fits into an exact sequence
\[
\xymatrix{
0 \ar[r] & \g \ar[r] & \cL \ar[r] &\cT_X \ar[r] &  0,
}
\]
where $\g$ is a vector bundle equipped with an $\cO_X$-linear Lie bracket.

There is a natural adjoint action of $\cL$ on $\g$, and hence $\g$ descends to a complex on $\XL$, which we denote by $\g_\XL$.  On an affine open subset $U$, it is represented by the dg module $\cO(\UL) \tens_{\cO(U)} \g$ over $\cO(\UL)$.

From the quasi-isomorphism
\[
\pi^*\cT_\XL \cong \rbrac{\xymatrix{\cL \ar[r] & \cT_X}} \cong (\xymatrix{\g \ar[r] & 0}) = \g[1]
\]
and the exact sequence \eqref{eqn:tan-exact}, we see that the natural inclusion
\[
\g_\XL[1] \to \cT_\XL
\]
is a quasi-isomorphism of complexes on $\XL$.  (See also~\cite[Corollary 4]{Crainic2008}.)

Thus the quasi-isomorphism $\pi^*\cT_\XL \to \pi^*\coT_\XL[q]$ induced by a $q$-shifted symplectic structure gives an equivalence 
\[
\g[1] \cong \g^\vee[q-1],
\]
of complexes on $X$, which forces $q = 2$.  Thus the algebra of differential forms on $\XL$ is canonically identified with $\Sym(\g^\vee_\XL[-2])$.  In particular, the space of closed $(2,2)$-forms on $\XL$ is given by the discrete set
\[
\coH^0(\XL,\Sym^2(\g^\vee_\XL)) = \coH^0(X,\Sym^{2}(\g^\vee))^\cL,
\]
of $\cL$-invariant symmetric bilinear forms on $\g$.  In conclusion, we have the following classification:
\begin{prop}\label{prop:2-shift-transitive}
Let $\cL$ be a transitive Lie 1-algebroid on $X$, and suppose that the kernel $\g \subset \cL$ of its anchor is nontrivial.  Then the only symplectic structures on $\XL$ have shift two, and they are in bijective correspondence with nondegenerate $\cL$-invariant symmetric bilinear forms on $\g$.
\end{prop}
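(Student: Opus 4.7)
The plan is to formalize the argument outlined in the paragraph preceding the proposition, which already contains most of the essential ingredients.

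First, I would make the identification $\pi^*\cT_\XL \simeq \g[1]$ precise at the level of $\QCoh(\XL)$. By \autoref{prop:tan-pullback}, the pullback tangent complex is the two-term complex $(\cL \xrightarrow{a} \cT_X)$ in degrees $-1, 0$; transitivity (surjectivity of $a$) yields a quasi-isomorphism to $\ker(a)[1] = \g[1]$, and dually $\pi^*\coT_\XL \simeq \g^\vee[-1]$. Any $q$-shifted symplectic form then induces a quasi-isomorphism $\g[1] \simeq \g^\vee[q-1]$; since $\g \neq 0$ is a vector bundle concentrated in a single cohomological degree, comparing degrees forces $q = 2$ and the induced map is represented by a self-dual bundle morphism $\g \to \g^\vee$.

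Second, with $q = 2$ fixed, I would identify the bigraded components of the form algebra. The equivalence $\coT_\XL \simeq \g^\vee_\XL[-1]$ upgrades to $\Omega^2_\XL \simeq \wedge^2(\g^\vee_\XL[-1]) \simeq \Sym^2(\g^\vee_\XL)[-2]$ in $\QCoh(\XL)$, so $\Omega^{2,2}(\XL)$ computes the cohomological degree-zero part of $\Sym^2(\g^\vee_\XL)$. Global sections give precisely $\coH^0(X, \Sym^2(\g^\vee))^\cL$, the space of $\cL$-invariant symmetric bilinear forms on $\g$.

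Third, I would unpack the closure data via \autoref{thm:reduced}, replacing $\Omega^{\geq 2}(\XL)$ by the normalized complex $\Omcl^2(\XL) = \Pot^1(\XL) \oplus \Omega^{\geq 2}(X)$ equipped with the twisted differential $\dtw$, and check that at cohomological degree $2$ the only surviving contribution is the $\dCE$-cocycle condition on the bilinear form, which is exactly $\cL$-invariance. Nondegeneracy of the resulting form then translates, through the identifications above, to nondegeneracy of the induced pairing $\g \to \g^\vee$ as a map of vector bundles.

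The main obstacle I expect is the third step: carefully tracking the normalized complex to confirm that neither the classical piece $\Omega^{\geq 2}(X)$ nor the potential piece $\Pot^1(\XL)$ contributes additional data at bidegree $(2,2)$, and that the twisting map $\twist$ couples them trivially in this range. In the transitive Lie $1$-algebroid setting the cotangent complex has cohomology concentrated in a single degree, so this verification should reduce to routine weight bookkeeping rather than any substantive homological subtlety.
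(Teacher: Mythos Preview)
Your proposal is correct and, for steps one and two, coincides with the paper's argument (which is the discussion immediately preceding the proposition rather than a separate proof environment).  The only divergence is step three: you propose routing the closure data through the normalized complex of \autoref{thm:reduced}, whereas the paper avoids this entirely.  Having established $\coT_\XL\simeq\g^\vee_\XL[-1]$, the paper simply extends the identification from $\Omega^2$ to the full de Rham algebra, obtaining $\Omega^\bullet(\XL)\simeq\Sym(\g^\vee_\XL[-2])$; a direct degree count then shows $\Omega^{p,q}(\XL)$ vanishes for $p\ge 2$ and $p+q<4$, and that at $p+q=4$ only the $(2,2)$-piece survives.  Hence the space of closed $(2,2)$-forms is discrete and equal to $\coH^0(\XL,\Sym^2(\g^\vee_\XL))$, with no closure data or homotopies to track.

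Your route via $\Omcl^2(\XL)=\Pot^1(\XL)\oplus\Omega^{\ge 2}(X)$ would also work, but note that your anticipated ``routine weight bookkeeping'' is slightly more involved than you suggest: in total degree four the classical piece genuinely contributes an $\Omega^4(X)$, and one must verify that the twisting map $\twist$ (which contracts with powers of the surjective anchor) kills it in cohomology.  This is true in the transitive case, but it is extra work the paper's direct degree argument sidesteps.
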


The pullback of forms along the projection $X \to \XL$ gives a natural map
\[
\coH^0(X,\Sym^{2}(\g^\vee))^\cL \to \coH^2(X,\Omega^{\ge 2}_X).
\]
This connecting homomorphism may be computed by covering $X$ with affine open subsets $U_i$ on which the inclusion $\g \subset \cL$ can be split, and using the resulting projections $\cL|_{U_i}\to \g|_{U_i}$ to give an explicit cocycle representative in $\Omega^{\ge 2}([U_i/\cL|_{U_i}])$.  Comparing splittings on double and triple overlaps allows one to extend this to a cocycle on all of $X$.

We shall not give the details of this process here; let us simply state the result in a special case.   Suppose that $G$ is a Lie group equipped with a nondegenerate pairing $\abrac{-,-}$ on its Lie algebra, and $P$ is a principal $G$-bundle on $X$.  Its Atiyah algebroid $\cL$ is an extension
\[
\xymatrix{
0\ar[r]& \ad P \ar[r] & \cL \ar[r] & \cT_X \ar[r] & 0
}
\]
and the kernel $\g = \ad P$ inherits an invariant nondegenerate pairing from $\abrac{-,-}$, producing a two-shifted symplectic structure on $[X/\cL]$.  In this case, the pullback of the symplectic form gives the class in $\coH^2(X,\Omega^{\ge 2}_X)$ associated with $P$ and $\abrac{-,-}$ by Chern--Weil theory, namely the first Pontryagin class~\cite{Bressler2007,Severa1998--2000}.   If we think of $P$ as a map $X \to BG$, then $[X/\cL]$ is a model for the formal neighbourhood of $X$ in $BG$ with its two-shifted symplectic structure~\cite[p.~299--300]{Pantev2013}.

\subsection{Isotropic and Lagrangian structures}

Suppose that $f\colon (Y,\cM) \to (X,\cL)$ is a morphism of manifolds equipped with $L_\infty$ algebroids.  Suppose further that $\XL$ carries a $q$-shifted symplectic structure $\omega \in \symp_q(\XL)$.  Then we may ask if the induced morphism
\[
f\colon \YM \to \XL
\]
defines a Lagrangian in $\XL$.  As with the definition of closed forms, the notion of Lagrangian corresponds to extra data on the map, rather than a property.

\begin{defn}
An \defterm{isotropic structure} on the map $f\colon \YM \to \XL$ is a choice of coboundary for the cocycle $f^*\omega \in \Omega^{\ge 2}(\YM)$.
\end{defn}

Picking out appropriate weight components of an isotropic structure, we obtain null homotopy of the composite sequence
\begin{equation}
\xymatrix{
\cT_{\YM} \ar[r] & f^*\cT_\XL \ar[r]^-{\omega} & \coT_\YM[q]
}
\label{eq:isotropicsequence}
\end{equation}
and hence a morphism
\[
\cN_f \to \coT_\YM[q]
\]
where $\cN_f = \Cone(\cT_\YM \to f^*\cT_\XL)$ is the normal complex.

\begin{defn}
An isotropic structure is \defterm{Lagrangian} if the induced morphism $\cN_f \to \coT_\YM[q]$ is a quasi-isomorphism, or equivalently, \eqref{eq:isotropicsequence} is a fibre sequence of complexes.
\end{defn}

\begin{ex}\label{ex:lag-quotient}
Consider the case $X=Y$ and $\cM=0$, so that $f$ is simply the quotient map
\[
f = \pi\colon X \to \XL
\] 
From the isomorphism
\[
\pi^*\cT_\XL \cong \rbrac{\xymatrix{\cdots \ar[r] & \cL_1\ar[r] & \cL_0 \ar[r]^{a} &\cT_U}}
\] 
we see that $\cN_\pi \cong \cL[1]$, so that the isotropic structure induces a morphism $\cL[1] \to \coT_X[q]$.  If the quotient map is Lagrangian, then $\cL \cong \coT_X[q-1]$.  For $q>1$ the only possibility is that $\cL$ is abelian (i.e.~the anchor and brackets vanish), so this condition is quite restrictive.  But as we recall in \autoref{sec:shift1-lagrangian}, the case $q=1$ is nontrivial: it gives the Lie algebroid $\coT_X$ associated to a Poisson structure. \qed
\end{ex}

\begin{ex}
Suppose that $\cL$ carries a zero-shifted symplectic structure, so that it is defined by a regular foliation equipped with a transverse symplectic form $B \in \wedge^2 (\cT_X/\cL)^\vee$ as in \autoref{sec:0shift}.  We claim that Lagrangians in $\XL$ correspond to immersed  Lagrangians in the leaf space of the foliation, in the sense of classical symplectic geometry.

Indeed, given a map $f\colon Y \to X$, consider a Lagrangian $\YM \to \XL$ obtained by lifting $f$ to a Lie algebroid morphism $\cM \to \cL$.  Since the symplectic structure has degree zero, there is no room for homotopies between forms.  Thus being Lagrangian is a condition, rather than extra data.

Computing the normal complex of $\YM \to \XL$, we see that the map is Lagrangian if and only if $B$ induces a quasi-isomorphism
\begin{align*}
\xymatrix{
\pi^*\cN \ar[d] & \cM \ar[r]\ar[d] & \cT_Y \ar[d]\ar[r] & f^*(\cT_X/\cL) \ar[r]\ar[d]^-{f^*B} & 0\ar[d] \\
\pi^*\coT_\YM & 0 \ar[r] & 0\ar[r] & \coT_Y \ar[r] & \cM^\vee.
}
\end{align*}
Considering the cohomology in degree 1 and $-2$, we see that $\cM$ must embed in $\cT_Y$ as an involutive subbundle, giving a regular foliation of $Y$.  Then, from the cohomology in degree $-1$, we see that $\cT_Y/\cM$ embeds in $f^*(\cT_X/\cL)$ as a subsheaf.  Finally, considering the cohomology in degree zero, we see that this subsheaf must be a subbundle $\cT_Y/\cM\subset f^*(\cT_X/\cL)$ that is maximally isotropic with respect to $B$.  Hence the map $\YM \to \XL$ is a Lagrangian immersion of the leaf spaces, as claimed. \qed
\end{ex}

\begin{ex}
Let $G$ be a Lie group equipped with an nondegenerate invariant bilinear form on its Lie algebra, and let $H \subset G$ be a closed subgroup whose corresponding Lie subalgebra is Lagrangian.  If $P$ is a principal $G$-bundle on $X$ with Atiyah algebroid $\cL(P)$, then $[X/\cL(P)]$ is 2-shifted symplectic as in \autoref{sec:symplectic-transitive}.  Moreover, if $P$ admits a reduction of structure to a principal $H$-bundle $P'$, then natural inclusion of Atiyah algebroids $\cL(P') \subset \cL(P)$ gives a Lagrangian map $[X/\cL(P')] \to [X/\cL(P)]$. \qed
\end{ex}

\section{Two-shifted symplectic forms}
\label{sec:shift2}

\subsection{Twisted Courant algebroids}
\label{sec:courant}
We now turn to the classification of shifted symplectic structures of low degree.  The strategy is to use homotopy transfer and the normalized complex of closed two-forms to reduce the complicated data of a shifted symplectic $L_\infty$-algebroid to a normal form in terms of the following objects:
\begin{defn}[\cite{Hansen2010,Liu1997}]\label{def:affineTCA}
Let $U$ be an affine manifold.  A \defterm{twisted Courant algebroid} on $U$ is a tuple $(\cE,K,\abrac{-,-},\circ,a)$, where
\begin{itemize}\setlength{\itemsep}{0em}
\item  $K \in \Omcl^4(U)$ is a global closed 4-form
\item $\cE$ is a locally free sheaf, i.e.~a vector bundle
\item $\abrac{-,-} \in \Sym^2(\cE^\vee)$ is a nondegenerate symmetric bilinear pairing 
\item $a\colon \cE \to \cT_U$ is an $\cO_U$-linear map, called the anchor, and
\item $\cour{-,-} \colon \cE\times \cE\rightarrow \cE$ is a bilinear operator, called the Courant--Dorfman bracket.
\end{itemize}
These data are subject to the following equations concerning their action on sections $x,y,z \in \cE$:
\begin{align}
\cour{x , f y} &= f\cour{x , y} + (\lie{ax}f) y \label{eq:tcalgebroid1} \\
\cour{x , x} &= \tfrac{1}{2}a^*\ddr \abrac{x,x} \label{eq:tcalgebroid2} \\
\lie{ax} \abrac{y,z} &= \abrac{ \cour{x,y} , z } + \abrac{ y, \cour{x,z}} \label{eq:tcalgebroid3} \\
\cour{x,\cour{ y , z}} &= \cour{\cour{x, y}, z} + \cour{ y,  \cour{ x, z} }
- \tfrac{1}{2}a^*\iota_{ax}\iota_{ay}\iota_{az} K, \label{eq:tcalgebroid4}
\end{align}
where $a^* : \Omega^1_U \to \cE$ is the transpose of the anchor with respect to $\abrac{-,-}$.
\end{defn}
\begin{defn} A twisted Courant algebroid is a \defterm{Courant algebroid} if its four-form is trivial: $K=0$.
\end{defn}

We shall often suppress the anchor, bracket and pairing in the notation, and simply say that $\cE$ or $(\cE,K)$ is a twisted Courant algebroid.  If we wish to emphasize that a twisted Courant algebroid is a Courant algebroid, we may refer to it as ``untwisted''.

\begin{ex}The original example of a Courant algebroid~\cite{Courant1988,Courant1990,Dorfman1987} is the bundle $\cE = \cT_U \oplus \cT_U^\vee$.  Its anchor is the projection to $\cT_U$, and its pairing is the  canonical one induced by the duality of $\cT_U$ and $\cT_U^\vee$.  Finally, the bracket is defined by
\[
\cour{x+\alpha,y+\beta} = [x,y]+\lie{x} \beta -  \iota_y \ddr \alpha 
\] 
where $x,y\in \cT_U$ and $\alpha,\beta \in \cT_U^\vee$.  This Courant algebroid is called the \defterm{standard Courant algebroid on $U$}.  \qed
\end{ex}

\begin{ex}\label{ex:h-twist-std}
Given a three-form $H \in \Omega^3(U)$, we can modify the bracket on the standard Courant algebroid by setting
\[
\cour{x + \alpha,y + \beta}_H = \cour{x+\alpha,y+\beta}+ \iota_{x}\iota_{y}H.
\]
We then obtain a twisted Courant algebroid, with four-form $K = \ddr H$. \qed
\end{ex}
We will give further examples in \autoref{sec:shift2-ex}.
Twisted Courant algebroids on an affine manifold $U$ naturally form a 2-groupoid $\TCAlgd(U)$, defined as follows:
\begin{itemize}
\item[] \textbf{Objects} of $\TCAlgd(U)$ are twisted Courant algebroids $(\cE,K)$ on $U$.
\item[] \textbf{1-Morphisms} $(\cE,K) \to (\cE',K')$ are pairs $(g,H)$, where $g\colon \cE\rightarrow \cE'$ is an orthogonal bundle isomorphism that is compatible with the anchors, and $H \in \Omega^3(U)$ is a three-form that relates the brackets and four-forms:
\begin{align*}
g\cour{x,y} - \cour{gx,gy}' &= \tfrac{1}{2}a'^*\iota_{ax}\iota_{ay} H \\
K' - K &= \ddr H
\end{align*} 
\item[] \textbf{2-morphisms} $(g,H) \Rightarrow (\widetilde{g},\widetilde{H})$  are two-forms $B\in \Omega^2(U)$ such that
\begin{align*}
\widetilde{g}-g &= \tfrac{1}{2} a'^* Ba \\ 
\widetilde{H} - H &= \ddr B.
\end{align*}
\end{itemize}

Courant algebroids on $U$, in contrast, are more rigid: they do not admit differential forms as higher symmetries, and therefore form a 1-groupoid $\CAlgd(U)$:
\begin{itemize}
\item[] \textbf{Objects} of $\CAlgd(U)$ are Courant algebroids $\cE$ on $U$
\item[] \textbf{1-Morphisms} $\cE \to \cE'$ in $\CAlgd(U)$ are given by bundle isomorphisms that preserve the pairing, anchor and bracket.
\end{itemize}

For an inclusion $U' \subset U$ of affine manifolds, there are obvious restriction functors $\TCAlgd(U) \to \TCAlgd(U')$ and $\CAlgd(U) \to \CAlgd(U')$, obtained by pulling back bundles and forms. This allows us to define the space of (twisted) Courant algebroids on an arbitrary manifold $X$ by gluing along open covers:
\begin{align*}
\TCAlgd(X) &= \lim_{U \subset X} \TCAlgd(U) & \CAlgd(X) &= \lim_{U\subset X} \CAlgd(U)
\end{align*}
where the limit is taken over the category of all affine subsets of $X$.

For (untwisted) Courant algebroids, the result of such a gluing is evident: since the isomorphisms between Courant algebroids are strict isomorphisms of vector bundles that preserve all of the structure, an object of the 1-groupoid $\CAlgd(X)$ is just a global vector bundle $\cE$ on $X$, equipped with an anchor, a pairing, and a bracket on its sheaf of sections, satisfying the axioms of \autoref{def:affineTCA} with $K=0$.  The notion of morphisms is the same as in the affine case.

But for twisted Courant algebroids, the situation is more complicated, due to the presence of 2-morphisms.  With respect to an affine open covering $\{U_i\}$ of $X$, a twisted Courant algebroid $\cE$ on $X$ is described by the following data:
\begin{itemize}\setlength{\itemsep}{0em}
\item A twisted Courant algebroid $(\cE_i,K_i)$ on each affine open set $U_i$, as above
\item A 1-morphism $(g_{ij},H_{ij})\colon \cE_i \to \cE_j$ on every double overlap $U_i \cap U_j$
\item A 2-morphism $B_{ijk}\colon (g_{ij}g_{jk}g_{ki},H_{ij}+H_{jk}+H_{ki}) \Rightarrow (\id_{\cE_i},0)$ on every triple overlap $U_i\cap U_j \cap U_k$,
\end{itemize}
subject to an appropriate cocycle condition. 

A key feature of twisted Courant algebroids is that the vector bundle gluing maps $g_{ij}$ satisfy the twisted cocycle condition
\begin{align}
g_{ij} g_{jk}g_{ki} = 1+\tfrac{1}{2}a_i^* B_{ijk} a_i  \in \Hom_{U_{ijk}}(\cE_i , \cE_i). \label{eqn:bundle-twist}
\end{align}
The cocycle $B_{ijk} \in \Z^2(X,\Omega^2_X)$ defines an $\Omega^2$-gerbe on $X$, and when this gerbe is nontrivial, $\cE$ will typically be a twisted bundle, rather than a global vector bundle in the classical sense.

More generally, the differential form data associated to a twisted Courant algebroid give a cocycle
\[
(B_{ijk},H_{ij},K_i) \in \Z^2(X,\Omega^{\ge 2}_X)
\]
in the hypercohomology of the truncated de Rham complex.  We call its cohomology class the \defterm{twisting class of $\cE$}:
\[
[\cE] = [(B_{ijk},H_{ij},K_i)] \in \coH^2(X,\Omega^{\ge 2}_X).
\]
since it is the obstruction to finding an untwisted Courant algebroid that is equivalent to $\cE$.

\subsection{Classification of two-shifted symplectic structures}

\label{sec:2shift-obj}

In this section we explain how to reduce an arbitrary two-shifted symplectic algebroid to a normal form, given in terms of twisted Courant algebroids.  More precisely, we will prove the following
\begin{thm}\label{thm:shift2}
For any manifold $X$, there is a canonical equivalence
\[
\Symp_2(X) \cong \TCAlgd(X)
\]
between the $\infty$-groupoid of two-shifted symplectic $L_\infty$ algebroids on $X$ and the 2-groupoid of twisted Courant algebroids.  Under this correspondence, the class in $H^2(X,\Omega^{\ge 2}_X)$ determined by the pullback of a two-shifted symplectic structure agrees with the twisting class of the corresponding twisted Courant algebroid.
\end{thm}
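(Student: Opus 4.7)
The plan is to establish the equivalence affine-locally and globalize by descent. Given a 2-shifted symplectic $L_\infty$-algebroid $(\cL, \omega)$ on an affine open $U \subset X$, we first invoke \autoref{lm:symp-ampl} to reduce $\cL$ up to quasi-isomorphism to a Lie 2-algebroid $\cL_1 \to \cL_0$. We then use the Homotopy Transfer Theorem (\autoref{thm:homotopytransfer}) together with a self-dual splitting of the degree $(-1)$ component of $\pi^*\omega$ to produce a further quasi-isomorphic model with $\cL_1 = \coT_U$ and $\cL_0 = \cE$, for which $\pi^*\omega : \pi^*\cT_\UL \to \pi^*\coT_\UL[2]$ becomes a strict isomorphism of complexes of vector bundles. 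This strictification equips $\cE$ with a nondegenerate symmetric pairing $\abrac{-,-}$ and with the anchor $a : \cE \to \cT_U$ required by a Courant algebroid.

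With $\cL$ in strict form, we apply \autoref{thm:reduced} to represent $\omega$ by a normalized cocycle $(\alpha, K) \in \Pot^{1,2}(\UL) \oplus \Omega^4(U)$ satisfying $\dpot \alpha + \twist K = 0$ and $\ddr K = 0$. Reading off weight components of $\alpha$ via \autoref{prop:operators} recovers the pairing in weight $2$, and applying $\dpot$ produces the Courant--Dorfman bracket $\cour{-,-}$ in weight $3$. The cocycle equations then translate term-by-term into the twisted Courant axioms \eqref{eq:tcalgebroid1}--\eqref{eq:tcalgebroid4}; in particular, the twisting map $\twist K = \iota_{\wedge^3 a} K$ matches the Jacobiator correction $\tfrac{1}{2}a^*\iota_{ax}\iota_{ay}\iota_{az} K$ appearing in \eqref{eq:tcalgebroid4}. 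Running the same identifications in reverse shows that every twisted Courant algebroid arises this way, yielding a bijection on objects modulo quasi-isomorphism.

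For the higher simplicial structure, we use the efficient Kan model $\Hom_{\red}$ of \autoref{thm:mapping-space}. A 1-simplex in $\Symp_2(U)$ between strictified objects unpacks to an orthogonal bundle map $g : \cE \to \cE'$ intertwining anchors together with a 3-form $H \in \Omega^3(U)$ trivializing $f^*\omega' - \omega$; the de Rham identity $\ddr H = K' - K$ and the bracket-shearing relation in $\TCAlgd(U)$ drop out as the weight components of this trivialization. A 2-simplex similarly reduces to a 2-form $B \in \Omega^2(U)$ implementing the required shearing, and \autoref{thm:Linfhomotopytype}(3) combined with degree-counting for the closure data shows that higher simplices are contractible, producing the strict 2-groupoid. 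Naturality in affine restrictions is manifest, so passing to homotopy limits over the affine cover of $X$ yields the global equivalence $\Symp_2(X) \cong \TCAlgd(X)$. The twisting-class statement is then immediate: the \v{C}ech data $(B_{ijk}, H_{ij}, K_i) \in \Z^2(X, \Omega^{\ge 2}_X)$ used to glue local strictifications of $(\cL, \omega)$ is exactly the standard \v{C}ech--de Rham representative of $\pi^*[\omega]$.

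The principal obstacle is the strictification in the first step: a quasi-isomorphism of bounded vector bundle complexes does not in general refine to a strict isomorphism, so one must carefully combine the homotopy transfer with the acyclic extension of $\cL_1$ to $\coT_U$ forced by the self-duality imposed by $\omega$, then verify that the transferred structure genuinely collapses to a strict Lie 2-algebroid rather than retaining nontrivial higher brackets. Once this is done, the remaining matching with the Courant axioms is largely combinatorial, but careful tracking of signs and weight coefficients in the normalized complex is needed so that the twisted Jacobi identity and the morphism shearing formulae emerge with their stated constants.
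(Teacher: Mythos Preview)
Your proposal follows essentially the same route as the paper: reduce to affines, strictify the 2-shifted symplectic algebroid to a two-term model $\coT_U\to\cE$ with strictly nondegenerate pairing (this is the content of \autoref{lm:courantstrict}), match the remaining data against the twisted Courant axioms, handle 1- and 2-morphisms explicitly, and globalize.

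Two points are worth sharpening. First, the inverse direction---producing a symplectic $L_\infty$ algebroid in Courant form from a twisted Courant algebroid---is not canonical: the operator component $\psi$ of the potential, and hence the skew $L_\infty$ bracket $[-,-]$ on $\cE$, depends on a choice of metric connection on $\cE$ (see \eqref{eq:2sympconn}--\eqref{eq:2sympcourant}). The paper absorbs this by routing through an auxiliary 2-groupoid $\TCAConn(U)$ of twisted Courant algebroids equipped with a metric connection, with forgetful equivalences to both $\TCAlgd(U)$ and $\Symp_2(U)$; your ``running the identifications in reverse'' needs exactly this extra datum. Second, your phrase ``applying $\dpot$ produces the Courant--Dorfman bracket in weight 3'' mislocates the bracket: $\cour{-,-}$ is defined from the $L_\infty$ bracket and $\psi$ by the explicit formula \eqref{eq:2sympcourant}, and the four weight components \eqref{eq:2sympalgd1}--\eqref{eq:2sympalgd4} of the closure equation $\dpot\beta=\twist K$ do not translate term-by-term into the Courant axioms---three of them instead \emph{determine} the differential, the mixed binary bracket, and the triple bracket on $\cL$, while only \eqref{eq:2sympalgd3} (after symmetrization) corresponds to axiom \eqref{eq:tcalgebroid3}.
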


Considering the definitions, it is enough to prove the theorem for an arbitrary affine manifold $U$.  The strategy of the proof is to consider an auxiliary 2-groupoid $\TCAConn(U)$ and produce a canonical pair of equivalences
\[
\xymatrix{
& \TCAConn(U) \ar[rd]^\sim \ar[ld]_\sim & \\
\Symp_2(U) && \TCAlgd(U).
}
\]
The 2-groupoid $\TCAConn(U)$ has a simple description: its objects are  pairs $(\cE,\nabla)$, where $\cE$ is a twisted Courant algebroid and $\nabla$ is a metric connection, i.e.~a connection on the vector bundle $\cE$ that preserves the nondegenerate pairing.  Morphisms in $\TCAConn(U)$ are morphisms of the underlying twisted Courant algebroids; the connections do not play a role.

The equivalence $\TCAConn(U) \to \TCAlgd(U)$ is provided by the forgetful functor.  Indeed, this functor is fully faithful by definition, and it is essentially surjective because every principal bundle on an affine manifold admits a connection.   Thus the rest of the section is concerned with the construction of the equivalence $\TCAConn(U) \cong \Symp_2(U)$. 

\subsubsection{Objects}

We begin by describing the equivalence on the level of objects.   Note that by \autoref{lm:symp-ampl}, an arbitrary two-shifted symplectic $L_\infty$ algebroid is equivalent to a Lie 2-algebroid
\[
\cL = \rbrac{\xymatrix{
\cL_1 \ar[r] & \cL_0
}}
\] 
The symplectic structure on $\cL$ is determined by a potential $\beta \in \Pot^{1,2}(\UL)$ and a closed four-form $K \in \Omcl^4(U)$ such that $\dpot \beta = \twist K$.  Now $\beta$ is a $(1,2)$-form, and by degree considerations, its weight decomposition consists of operators
\begin{align*}
\phi = \beta_1 \colon \cL_1 &\to \Omega^1(U) &
\psi = \beta_2 \colon \cL_0\times \cL_0 &\to \Omega^1(U)
\end{align*}
and their symbols
\begin{align*}
\bbeta_1 &\in \cL_1^\vee & Q = \bbeta_2 &\in \cL_0^\vee \otimes \cL_0^\vee
\end{align*}
The condition $h\beta = 0$ for $\beta$ to define an element of $\Pot^{1,2}(\UL)$ is equivalent to setting $\bbeta_1 = 0$ identically, and requiring $Q$ to be symmetric. Thus $\phi$ is $\cO(U)$-linear, while  
\[
\psi(x,fy) = f\psi(x,y) + Q(x,y) \ddr f
\]
for $x,y \in \cL_0$ and $f \in \cO(U)$.

Applying the relation between the differential $\d$ and the $L_\infty$ algebroid structure, we see that closure equation $\dpot \alpha = \twist K$ is equivalent to the following four equations obtained from the weight and degree decomposition of $\dpot \alpha$:
\begin{align}
&\lie{ax}\psi(y, z) + \psi(x, [y, z]) - \tfrac{1}{3}\ddr(Q(x, [y, z]) +  \iota_{ax}\psi(y, z)) + \circlearrowright   + \phi([x, y, z]) \nonumber \\& = \iota_{ax}\iota_{ay}\iota_{az} K \label{eq:2sympalgd1} \\
&\lie{ax} \phi(u) - \phi([x, u]) - \psi(x, \delta u) + \ddr Q(x, \delta u) = 0 \label{eq:2sympalgd2} \\
&3\lie{ay} Q(x, z)- 2Q(x, [y, z]) + \iota_{ax} \psi(y, z) - (x\leftrightarrow y) \nonumber \\
&+ 4Q([x, y], z) - 2\iota_{az} \psi(x, y) = 0 \label{eq:2sympalgd3} \\
&2Q(\d u,x)  + \iota_{ax} \phi(u) = 0 \label{eq:2sympalgd4}
\end{align}
where $x,y,z \in \cL_0$ and $u \in \cL_1$.

The last equation simply says that the pullback is a morphism of complexes
\begin{align}
\begin{gathered}
\xymatrix{
\pi^*\cT_\UL \ar[d] & \cL_1 \ar[r]^-{\delta} \ar[d]^-{\phi} & \cL_0 \ar[r]^-{a}\ar[d]^-{\tfrac{1}{2}Q} & \cT_U \ar[d]^{\phi^\vee} \\
\pi^*\coT_\UL[2] & \coT_U \ar[r]^-{a^\vee} & \cL_0^\vee \ar[r]^-{\delta^\vee} & \cL_1^\vee
}
\end{gathered}\label{eqn:2-shift-tan0}
\end{align}

This diagram may be simplified in the following way:

\begin{prop}\label{lm:courantstrict}
Any two-shifted symplectic $L_\infty$ algebroid is symplectically quasi-isomorphic to one for which the diagram \eqref{eqn:2-shift-tan0} has the form
\begin{align}
\begin{gathered}
\xymatrix{
\pi^*\cT_\UL \ar[d] & \coT_U \ar[r]^-{\delta} \ar@{=}[d] & \cE \ar[r]^-{a} \ar[d]^-{\tfrac{1}{2}\abrac{-,-}} & \cT_U \ar@{=}[d] \\
\pi^*\coT_\UL[2] & \coT_U \ar[r]^-{a^\vee} & \cE^\vee \ar[r]^{\delta^\vee} & \cT_U,
}
\end{gathered}\label{eqn:2-shift-tan}
\end{align}
where $\abrac{-,-} \in \Sym^2(\cE^\vee)$ is a nondegenerate symmetric bilinear form.
\end{prop}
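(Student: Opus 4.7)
The plan is to strictify the weakly-nondegenerate symplectic data by applying the Homotopy Transfer Theorem (\autoref{thm:homotopytransfer}) to replace the underlying complex $\cL_1 \to \cL_0$ by one of the form $\coT_U \to \cE$ on which the pairing becomes strictly nondegenerate, and then to transport the symplectic form across this equivalence.

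First, I would construct the target complex.  The vertical arrow in \eqref{eqn:2-shift-tan0} is a quasi-isomorphism between two 3-term complexes of vector bundles on the affine manifold $U$, so its mapping cone is acyclic; since $U$ is affine, this cone splits as a direct sum of contractible pairs $\cF \xrightarrow{\id} \cF$.  Using these splittings, I assemble a new vector bundle $\cE$ containing $\cL_0$ as a direct summand, together with additional hyperbolic summands of the form $\cF \oplus \cF^\vee$, chosen so that the extension of $Q$ by the hyperbolic pairings yields a \emph{strictly} nondegenerate symmetric form $\abrac{-,-}$ on $\cE$.  The new differential $\delta'\colon \coT_U \to \cE$ and anchor $a'\colon \cE \to \cT_U$ are then defined so that $\coT_U \xrightarrow{\delta'} \cE \xrightarrow{a'} \cT_U$ is a complex quasi-isomorphic to $\cL_1 \to \cL_0 \to \cT_U$, and so that $\delta'$ is identified with the transpose $a'^\vee$ under the pairing, making the dual sequence agree strictly with the original.

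Next, I apply \autoref{thm:homotopytransfer} to transfer the $L_\infty$ algebroid structure along this quasi-isomorphism of complexes of vector bundles, obtaining a Lie 2-algebroid $\cL' = (\coT_U \to \cE)$ with an $L_\infty$-quasi-isomorphism $\cL' \to \cL$.  The pullback of $\omega$ is a cocycle in $\Omega^{\geq 2}([U/\cL'])$, and by \autoref{thm:reduced} it can be replaced, up to a coboundary, by a normalized representative whose weight-1 part is trivial and whose weight-2 part encodes the pair $(\abrac{-,-}, a')$.  The normal form \eqref{eqn:2-shift-tan} then follows directly from the construction: the nondegeneracy of $\abrac{-,-}$ is built into $\cE$, and the self-dual identification of the tangent and cotangent complexes is strict by design.

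The main obstacle is the last step: one must ensure that the transferred symplectic form really takes the strict normal form claimed, rather than a form merely homotopic to it.  This requires comparing the pulled-back normalized representative against the canonical form described in \eqref{eqn:2-shift-tan} and absorbing the discrepancy into an explicit coboundary in $\Omega^{\geq 2}([U/\cL'])$, using the homotopy operator $h$ of \autoref{thm:reduced} to exhibit the difference as a $\dtw$-exact cochain.  Only once this coboundary is produced do we obtain a genuine symplectic quasi-isomorphism, as opposed to an $L_\infty$-quasi-isomorphism of algebroids coupled with an unspecified weak equivalence of forms.
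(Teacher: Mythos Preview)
Your overall strategy---enlarge the underlying complex so that the middle pairing becomes strictly nondegenerate, then transfer the $L_\infty$ structure and pull back $\omega$---matches the paper's in spirit, but the execution has two genuine gaps.

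First, the construction of $\cE$ as ``$\cL_0$ plus hyperbolic summands $\cF\oplus\cF^\vee$'' does not do what you claim.  If the hyperbolic pieces are added as orthogonal direct summands, the radical of $Q$ on $\cL_0$ remains in the radical of the extended form; a degenerate form cannot be made nondegenerate by orthogonally adjoining a nondegenerate one.  What is actually needed is a non-orthogonal extension with carefully chosen off-diagonal terms, and you have not specified these.  The paper sidesteps this by enlarging $\cL$ canonically to $\widetilde{\cL}=\cL\oplus\coT_U\oplus\coT_U[1]$ (the cone on $\id_{\coT_U}$) and only \emph{after} modifying the form does it pass to $\cE=\coH^0(\widetilde{\cE})$, where nondegeneracy is then automatic.

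Second, and more seriously, you correctly identify the ``main obstacle''---getting the transferred form into the strict shape \eqref{eqn:2-shift-tan} with outer maps equal to the identity---but you do not resolve it.  Saying one should ``absorb the discrepancy into an explicit coboundary'' is exactly the content of the proof, not a step one can defer.  The paper's key move here is concrete: it observes that the identity operator on $\coT_U$ defines a canonical element $\tau\in\cT_U\otimes\coT_U\subset\Pot^{1,1}([U/\widetilde{\cL}])$, and that adding $\dtw\tau$ to the pulled-back form converts the outer vertical maps from $(0,\phi)$ to $(\id,0)$ while simultaneously introducing the off-diagonal blocks $a,a^\vee$ into the middle pairing.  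This single explicit coboundary is what makes the argument go through, and nothing in your proposal produces it.
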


\begin{proof}
Consider the complex $\widetilde{\cL} = \cL\oplus \cT^\vee_U\oplus \cT^\vee_U[1]$ with the differential twisted by $\phi$ and the identity $\cT^\vee_U\rightarrow \cT^\vee_U$ as follows:
\[
\xymatrix{
\cT^\vee_U\oplus \cL_1 \ar^{  \begin{psmallmatrix} \id & \phi \\ 0 & \d \end{psmallmatrix}}[rr] && \cT^\vee_U\oplus \cL_0
}
\]

Let us also define $\widetilde{\cE} = \cL\oplus \cT^\vee_U$ with the differential twisted by $\phi$. The natural projection $p\colon \widetilde{\cL}\rightarrow \cL$ has a splitting $i\colon \cL\rightarrow \widetilde{\cL}$ given by
\[
\xymatrix{
\cL_1 \ar^{(-\phi, \id)}[d] \ar[r] & \cL_0 \ar^{\id}[d] \\
\cT^\vee \oplus \cL_1 \ar[r] & \cT^\vee \oplus \cL_0
}
\]
making $p$ into a deformation retract. Therefore, by \autoref{lm:homotopytransfer1} we obtain an $L_\infty$ algebroid structure on $\widetilde{\cL}$; moreover, pulling back the two-shifted symplectic structure on $\cL$ along $p$ we obtain a two-shifted symplectic structure $\widetilde{\omega}$ on $\widetilde{\cL}$ of the following shape:
\[
\xymatrix{
\cT^\vee_U\oplus \cL_1 \ar^{(0, \phi)}[d] \ar[r] & \cT^\vee_U\oplus \cL_0 \ar^{ \tfrac{1}{2} \begin{psmallmatrix}0 & 0 \\ 0 & Q\end{psmallmatrix}}[d] \ar[r] & \cT_U \ar^{(0, \phi)}[d] \\
\cT^\vee_U \ar[r] & \cT_U\oplus \cL_0^\vee \ar[r] & \cT_U\oplus \cL_1^\vee
}
\]

We have a subspace $\cT_U \otimes \coT_U \subset \widetilde{\cL}_0^\vee\otimes \cT^\vee_U\subset \Pot^{1, 1}(\widetilde{\cL})$ and it contains a canonical element $\tau$ corresponding to the identity. The form $\widetilde{\omega}+\dtw\tau$ is still nondegenerate and it has the following shape:
\[
\xymatrix{
\cT^\vee_U\oplus \cL_1 \ar^{(\id,0)}[d] \ar[r] & \cT^\vee_U\oplus \cL_0 \ar^{ \tfrac{1}{2}\begin{psmallmatrix} 0 & a \\ a^\vee & Q \end{psmallmatrix}}[d] \ar[r] & \cT_U \ar^{(\id, 0)}[d] \\
\cT^\vee_U \ar[r] & \cT_U\oplus \cL_0^\vee \ar[r] & \cT_U\oplus \cL_1^\vee
}
\]
The nondegeneracy of the two-shifted symplectic structure on $\cL$ is now equivalent to the morphism
\[
{
\begin{psmallmatrix}
0 & a \\
 a^\vee & Q
\end{psmallmatrix}} \colon
\xymatrix{
\widetilde{\cE} \ar[r] & \widetilde{\cE}^\vee
}
\]
being a quasi-isomorphism. But $\widetilde{\cE}$ is a complex of vector bundles concentrated in non-positive degrees, so the projection $\widetilde{\cE}\rightarrow \cH^0(\widetilde{\cE})$ is also a quasi-isomorphism. Therefore, we can replace $\widetilde{\cE}$ by its cohomology $\cE=\cH^0(\tilde{\cE})$ on which the pairing is strictly nondegenerate and the claim follows.
\end{proof}

We say that a two-shifted symplectic algebroid is in \defterm{Courant form} if it has the form described in \autoref{lm:courantstrict}.  Given an algebroid in Courant form, we may use the nondegeneracy of the pairing to define a connection $\nabla \colon \cE\rightarrow \Omega^1(U) \otimes \cE$ and a bracket $\cour{-,-} \colon \cE \times \cE \to \cE$  by the formulae
\begin{align}
 \langle\nabla x, y\rangle &= \tfrac{1}{2}(\ddr \langle x, y\rangle - \psi(x, y)) \label{eq:2sympconn}
\\
\abrac{ \cour{x,y},z} &= \abrac{ [x, y], z } + \abrac{ \nabla_{az} x, y }\nonumber \\
&= \abrac{ [x, y], z } + \tfrac{1}{2}\lie{az}\abrac{ x, y } - \tfrac{1}{2}\iota_{az}\psi(x, y)\nonumber
\end{align}
or equivalently
\begin{align}
\cour{x, y} = [x,y] + \tfrac{1}{2}a_\cE^* \ddr \abrac{x,y} - \tfrac{1}{2} a_{\cE}^* \psi(x,y).\label{eq:2sympcourant}
\end{align}
Because of the skew-symmetry of $\psi$, the connection $\nabla$ is automatically metric,~i.e. it satisfies the equation
\[
\ddr \langle x, y\rangle = \langle\nabla x, y\rangle + \langle x, \nabla y\rangle.
\]
The following result then describes the equivalence $\TCAConn(U) \to \Symp_2(U)$ on the level of objects:

\begin{prop}\label{prop:symp-cour}
The formulae \eqref{eq:2sympconn} and \eqref{eq:2sympcourant} give a bijective correspondence between shifted symplectic $L_\infty$ algebroids in Courant form and twisted Courant algebroids equipped with a metric connection.
\end{prop}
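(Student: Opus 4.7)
The plan is to verify that the formulas \eqref{eq:2sympconn} and \eqref{eq:2sympcourant} define mutually inverse constructions. For the forward direction, we start from a symplectic $L_\infty$-algebroid in Courant form and read the four closure equations \eqref{eq:2sympalgd1}--\eqref{eq:2sympalgd4} together with the $L_\infty$-axioms as instances of the twisted Courant axioms \eqref{eq:tcalgebroid1}--\eqref{eq:tcalgebroid4}. In the reverse direction, I take $\phi = \id$, $Q = \abrac{-,-}$, define $\psi$ from the metric connection by $\psi(x,y) = \abrac{x,\nabla y} - \abrac{\nabla x, y}$ (automatically skew by symmetry of the pairing, and carrying the correct symbol $\bbeta_2 = Q$), define the binary bracket on $\cL_0 = \cE$ by inverting \eqref{eq:2sympcourant}, and read off the ternary bracket $[x,y,z] \in \coT_U = \cL_1$ from equation \eqref{eq:2sympalgd1} (which is possible since $\phi = \id$).

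In Courant form, equations \eqref{eq:2sympalgd2} and \eqref{eq:2sympalgd4} become tautological: \eqref{eq:2sympalgd4} is exactly the statement that the middle vertical map of \eqref{eqn:2-shift-tan} is the pairing, while \eqref{eq:2sympalgd2} reduces to the metric compatibility of $\nabla$. Equation \eqref{eq:2sympalgd3} translates directly into the invariance axiom \eqref{eq:tcalgebroid3}: substituting the definition of $\cour{-,-}$ into $\abrac{\cour{x,y},z} + \abrac{y,\cour{x,z}}$, antisymmetrizing in $y,z$ and using $Q = \abrac{-,-}$, the redundant terms cancel and leave $\lie{ax}\abrac{y,z}$. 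The symmetric-part identity \eqref{eq:tcalgebroid2} is immediate from skew-symmetry of $[-,-]$ and $\psi$, and the Leibniz rule \eqref{eq:tcalgebroid1} is inherited from the Leibniz rule on $\cL_0$ together with the $\cO(U)$-linearity of $a^*\ddr$ and the symbol of $\psi$.

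The main obstacle is establishing the twisted Jacobi identity \eqref{eq:tcalgebroid4}. Here I would start from the $L_\infty$ higher Jacobi identity for the triple $(x,y,z) \in \cL_0^{\times 3}$, which takes the schematic form
\[
\d[x,y,z] + [[x,y],z] + \circlearrowright = 0,
\]
pair it with a fourth element $w \in \cE$ under $\abrac{-,-}$, and substitute $[x,y] = \cour{x,y} - \tfrac{1}{2}a^*\ddr\abrac{x,y} + \tfrac{1}{2}a^*\psi(x,y)$ throughout. The nested-bracket contributions reorganize into the Jacobiator for $\cour{-,-}$, the $\d[x,y,z]$ term becomes $\abrac{\delta[x,y,z], w} = \abrac{a^*\ddr[x,y,z], w}$ via Courant form, and the remaining pieces are controlled by \eqref{eq:2sympalgd1}, which replaces $[x,y,z]$ by $\iota_{ax}\iota_{ay}\iota_{az}K$ modulo terms involving only $\psi$ and $Q$. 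Contracting once more with $a$ yields precisely the $-\tfrac{1}{2}a^*\iota_{ax}\iota_{ay}\iota_{az}K$ of \eqref{eq:tcalgebroid4}.

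Finally, for the reverse construction, I would verify that the data $(\phi = \id,\psi,Q,[-,-],[-,-,-])$ just defined satisfy both the $L_\infty$-algebroid axioms and the closure conditions \eqref{eq:2sympalgd1}--\eqref{eq:2sympalgd4}. The quadratic $L_\infty$ relations reduce, after the same algebraic manipulations reversed, to \eqref{eq:tcalgebroid1}--\eqref{eq:tcalgebroid3} together with metric compatibility of $\nabla$, while the cubic $L_\infty$ relation and closure equation \eqref{eq:2sympalgd1} are jointly equivalent to \eqref{eq:tcalgebroid4} plus the de Rham closure of $K$, which is exactly the content of $K \in \Omcl^4(U)$. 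The two constructions are inverse by inspection of the formulas, proving the bijection.
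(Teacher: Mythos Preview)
Your overall strategy matches the paper's, but there is a genuine gap in your bookkeeping of the $L_\infty$ structure. An $L_\infty$ algebroid in Courant form has, besides the binary bracket on $\cL_0=\cE$ and the ternary bracket, two further pieces you never define in the reverse construction: the differential $\delta\colon \cL_1=\coT_U\to \cL_0=\cE$ and the mixed binary bracket $\cL_0\times\cL_1\to\cL_1$. These are not already present in the twisted Courant data $(\cE,K,\abrac{-,-},\cour{-,-},a,\nabla)$, and without them you cannot speak of the higher Jacobi identities or verify the closure equations.

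The paper's key structural observation, which repairs this, is that equations \eqref{eq:2sympalgd1}, \eqref{eq:2sympalgd2} and \eqref{eq:2sympalgd4} are not ``tautological'' but rather \emph{determine} exactly these missing structures once $\phi=\id$: \eqref{eq:2sympalgd4} fixes $\delta$ (as a multiple of $a^*$), \eqref{eq:2sympalgd2} fixes the bracket $[x,u]$ for $u\in\cL_1$, and \eqref{eq:2sympalgd1} fixes the ternary bracket. Your claim that \eqref{eq:2sympalgd2} ``reduces to metric compatibility of $\nabla$'' is therefore mistaken; metric compatibility is instead an automatic consequence of the skew-symmetry of $\psi$. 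Once these three structures are so defined, only \eqref{eq:2sympalgd3} survives as a genuine constraint, and the paper shows it is equivalent to \eqref{eq:tcalgebroid3} by a symmetrization/antisymmetrization argument (the symmetrization of \eqref{eq:2sympalgd3} gives \eqref{eq:tcalgebroid3}; conversely, \eqref{eq:tcalgebroid3} forces the left side of \eqref{eq:2sympalgd3} to be totally antisymmetric, and its antisymmetrization is manifestly zero). Your antisymmetrizing-in-$y,z$ sketch is not quite this argument and should be revisited. The remaining correspondences you list---\eqref{eq:tcalgebroid1} with the Leibniz rule, \eqref{eq:tcalgebroid2} with skew-symmetry of $[-,-]$, \eqref{eq:tcalgebroid4} with the $L_\infty$ Jacobi identity---are correct and agree with the paper.
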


\begin{proof}
For an algebroid in Courant form, we have that $\phi = \id$.  Thus the closure conditions \eqref{eq:2sympalgd1}, \eqref{eq:2sympalgd2} and \eqref{eq:2sympalgd4} uniquely determine the triple bracket $\cL_0\times \cL_0\times \cL_0\rightarrow \cL_1$, the binary bracket $\cL_0\times \cL_1\rightarrow \cL_1$ and the differential $\delta$ in terms of the remaining data.  It is therefore sufficient to see that the axioms for a twisted Courant algebroid are the same as the remaining equations for the symplectic $L_\infty$ algebroid structure.

Indeed, axiom \eqref{eq:tcalgebroid1} for a twisted Courant algebroid is equivalent to the Leibniz rule for the $L_\infty$ bracket $[-, -] \colon \cE \times \cE \to \cE$, axiom \eqref{eq:tcalgebroid2} is equivalent to the antisymmetry of the bracket $[-, -]$, and axiom \eqref{eq:tcalgebroid4} is equivalent to the Jacobi rule for the $L_\infty$ brackets on $\cL$.  Finally, axiom \eqref{eq:tcalgebroid3} is equivalent to the equation
\[
\lie{a(x)} Q(y, z) = Q([x, y], z) + \tfrac{1}{2}\lie{a(y)}Q(x, z) - \tfrac{1}{2}\iota_{a(y)} \psi(x, z) + (y\leftrightarrow z),
\]
which is the symmetrization of the remaining closure equation \eqref{eq:2sympalgd3}. Conversely, if \eqref{eq:tcalgebroid3} is satisfied, the left-hand side of \eqref{eq:2sympalgd3} is completely antisymmetric, but its antisymmetrization is obviously zero. Therefore, axiom \eqref{eq:tcalgebroid3} is equivalent to \eqref{eq:2sympalgd3}.
\end{proof}

\subsubsection{$1$-morphisms}

Suppose we are given a pair $(\cL,\omega)$ and $(\cL',\omega')$ of symplectic algebroids in Courant form, corresponding to twisted Courant algebroids $(\cE,K)$ and $(\cE',K')$.  An $L_\infty$ morphism $g\colon \cL \to \cL'$ consists of a quasi-isomorphism of complexes
\[
g\colon \cL \to \cL'
\]
that preserves the anchors, and a map
\[
\tg\colon \wedge^2 \cE \to \cT^\vee_U = \cL'_1,
\]
satisfying the $L_\infty$ 
morphism equations
\begin{align}
g[x,y] - [gx,gy] &= \d \tg(x,y) \label{eq:2symp1mor1} \\
g[x,u] - [gx,gu] &= \tg(\xi, \d u) \label{eq:2symp1mor2} \\
g[x,y,z] - [gx,gy,gz] &= -\tg([x,y],z) + \tg([x,z],y) - \tg([y,z],x)  \nonumber \\
&\ \ \ + [\tg(x,y), gz] - [\tg(x,z), gy]  +[\tg(y,z), gx] \label{eq:2symp1mor3}
\end{align}
for $x,y,z \in \cE$ and $u \in \cT^\vee_U$.

To extend such a quasi-isomorphism to a symplectic equivalence, we must include a homotopy of closed two-forms, given by an element of 
\[
\Omcl^{2,1}(\UL) = \Pot^{1,1}(\UL) \oplus \Omega^3(U).
\]
The elements of $\Pot^{1, 1}(\UL)$ are $(1,1)$-forms in the image of $h$.  Considering the weight decomposition, it is easy to see that in fact $\Pot^{1, 1}(\UL)\cong \cE^\vee \otimes \Omega^1(U)$.  The homotopy then consists of elements
\begin{align*}
\tau &\in \cE^\vee \otimes \Omega^1(U) & H \in \Omega^3(U)
\end{align*}
satisfying the homotopy equation $g^*\omega'-\omega = \dtw(\tau + H)$, which gives the system
\begin{align}
K' - K &= dH \label{eq:2symp1mor4} \\
gu - u &= -\tau(\delta u) \label{eq:2symp1mor5} \\
\abrac{gx,gy} - \abrac{x,y} &= \tfrac{1}{2}(\iota_{ax} \tau y + \iota_{ay} \tau x) \label{eq:2symp1mor6} \\
-\tilde{g}(x, y) + \psi'(gx, gy) - \psi(x, y) &= \tau[x, y] - \lie{ax} \tau y + \lie{ay} \tau x \nonumber \\
&\ \ \ \ + \tfrac{1}{2} \ddr(\iota_{ax} \tau y - \iota_{ay} \tau x) + \iota_{a x}\iota_{a y } H \label{eq:2symp1mor7}
\end{align}

Observe that the equation \eqref{eq:2symp1mor2} follows from \eqref{eq:2symp1mor7}. Similarly, equation \eqref{eq:2symp1mor3} follows from the definition of the triple bracket $[-, -, -]$ given by \eqref{eq:2sympalgd1}. Equation \eqref{eq:2symp1mor5} determines the morphism $g\colon \cL \rightarrow \cL'$ in degree $-1$ and equation \eqref{eq:2symp1mor7} determines $\tilde{g}(x, y)$. 

We conclude that a 1-morphism in $\Symp_2(X)$ is uniquely determined by the triple $(g, \tau, H)$, where $g\colon \cE \rightarrow \cE'$ is bundle map, $\tau \in \cE^\vee \otimes \Omega^1(U)$ and $H \in \Omega^3(U)$ satisfy the equations \eqref{eq:2symp1mor1}, \eqref{eq:2symp1mor4} and \eqref{eq:2symp1mor6}.

We say that a 1-morphism is in \defterm{Courant form} if $\tau = 0$.  In this case, the equations reduce to the equations for $(g,H)$ to give a 1-morphism of twisted Courant algebroids $(\cE,K) \to (\cE',K')$.  In this way, we define the functor $\TCAConn(U) \to \Symp_2(U)$ on the level of 1-morphisms. 

\subsubsection{$2$-morphisms}
\label{sec:2shift-2mor}

Finally, suppose we are given a pair of 1-morphisms $f_i\colon \cL \to \cL'$ for $i=1,2$ determined by the data $g_i\colon \cE \to \cE'$, $\tau_i \in \cE^\vee \otimes \Omega^1(U)$ and $H_i \in \Omega^3(U)$ as above.

A 2-morphism $f_1 \Rightarrow f_2$ in $\Symp_2(U)$ consists of a homotopy operator on the complexes, i.e.~an $\cO_U$-linear map 
\[
h\colon \cE \to \cT^\vee_U = \cL'_1
\]
and a form 
\[
B \in \Omcl^{2,0}(\UL) = \Omega^2(U)
\]
We require  $(\d + \ddr)B$ to equal the difference of the 2-form data appearing in $f_i$, giving the equations
\begin{align}
H_2 - H_1 &= \ddr B & \tau_2 x - \tau_1 x = -\iota_{ax} B - hx \label{eq:2symp2mor1}
\end{align}
Evidently, this equation uniquely determines $h$ from the rest of the data.

The $L_\infty$ homotopy equations read
\begin{align}
g_2x - g_1x &= -\tfrac{1}{2} a^* hx \label{eq:2symp2mor3} \\
g_2u - g_1u &= -\tfrac{1}{2} h(a'^*u) \label{eq:2symp2mor4} \\
\tg_2(x, y) - \tg_1(x, y) &= h[x, y] - [hx, g_1y] - [g_1x, hy]. \label{eq:2symp2mor5}
\end{align}

It is easy to see that equation \eqref{eq:2symp2mor4} follows from equations \eqref{eq:2symp1mor5} and \eqref{eq:2symp2mor1}. Similarly, equation \eqref{eq:2symp2mor5} follows from equations \eqref{eq:2symp1mor7} and \eqref{eq:2symp2mor1}.  We may now complete the proof of the main result:
\begin{proof}[Proof of \autoref{thm:shift2}]
Considering the computations at the level of objects, morphisms and two-morphisms, we have evidently produced a 2-functor
\[
\TCAConn(U)\rightarrow \Symp_2(U).
\]
By \autoref{lm:courantstrict} and \autoref{prop:symp-cour}, this 2-functor is essentially surjective, so we just have to show that it is fully faithful, i.e.~that for two twisted Courant algebroids $\cE_1,\cE_2$ and the corresponding two-shifted symplectic $L_\infty$ algebroids $\cL_1, \cL_2$, the functor
\[\Hom_{\TCAConn(U)}(\cE_1, \cE_2)\rightarrow \Hom_{\Symp_2(U)}(\cL_1, \cL_2)\]
is an equivalence of 1-groupoids.  It is clearly fully faithful since the 2-morphisms in both $\TCAConn(U)$ and $\Symp_2(U)$ are determined by a 2-form $B$ satisfying the same set of equations. To see that it is essentially surjective, we must show that any one-morphism in $\Hom_{\Symp_2(U)}(\cL_1,\cL_2)$ is equivalent to one in Courant form (i.e.~with $\tau = 0$).  But this follows immediately from \eqref{eq:2symp2mor1} and \eqref{eq:2symp2mor3}.
\end{proof}

\subsection{Classification of isotropic quotients}
\label{sec:shift2-iso}
Let $X$ be a manifold, and let $\SympIso_{2}(X)$ be $\infty$-groupoid of  two-shifted symplectic algebroids $(\cL,\omega)$ on $X$, equipped with an isotropic structure on the quotient map
\[
X \to \XL
\]
Then we immediately have the following result.
\begin{prop}\label{prop:shift2-iso}
For any manifold $X$, the $\infty$-groupoid $\SympIso_{2}(X)$ is equivalent to the 1-groupoid $\CAlgd(X)$ of Courant algebroids.  In particular, a twisted Courant algebroid is equivalent to an untwisted Courant algebroid if and only if its twisting class vanishes.
\end{prop}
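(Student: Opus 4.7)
The plan is to combine \autoref{thm:shift2} with a careful unpacking of the isotropic data in terms of \v{C}ech--de Rham cochains. Under the equivalence $\Symp_2(X) \cong \TCAlgd(X)$, the pullback of the symplectic form along $\pi : X \to \XL$ corresponds (by the second statement of \autoref{thm:shift2}) to a cocycle representative $(B_{ijk}, H_{ij}, K_i) \in \Z^2(X, \Omega^{\geq 2}_X)$ of the twisting class $[\cE]$. An isotropic structure on $\pi$ is by definition a coboundary for this cocycle in $\Omega^{\ge 2}(X)$, which over an affine cover $\{U_i\}$ amounts to the choice of cochains $\beta_{ij} \in \Omega^2(U_{ij})$ and $\gamma_i \in \Omega^3(U_i)$ satisfying
\[
B_{ijk} = \beta_{ij} + \beta_{jk} + \beta_{ki}, \qquad H_{ij} = \gamma_j - \gamma_i + \ddr \beta_{ij}, \qquad K_i = \ddr \gamma_i.
\]

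The key observation is that such cochains give precisely the data needed to promote the twisted Courant algebroid $\cE$ to an untwisted one. First I would apply the local 2-morphism $\beta_{ij}$ to shear the transition maps, replacing $g_{ij}$ by $\widetilde{g}_{ij} = g_{ij} - \tfrac{1}{2} a_i^\vee \beta_{ij} a_i$; the first cocycle identity above together with \eqref{eqn:bundle-twist} shows that $\widetilde{g}_{ij}$ now satisfies the strict cocycle condition, so the sheaf of sections assembles into a genuine global vector bundle. Next, on each $U_i$ I would apply the local 1-morphism $(\id, \gamma_i)$ from \autoref{sec:courant} to shift the bracket by $\tfrac{1}{2} a^* \iota_{ax} \iota_{ay} \gamma_i$, which kills the four-form twist via $K_i = \ddr \gamma_i$. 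The compatibility of these modifications on double overlaps is governed exactly by the second identity above. The result is an untwisted Courant algebroid in $\CAlgd(X)$ canonically equivalent to $(\cE, K)$ in $\TCAlgd(X)$.

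Conversely, any untwisted Courant algebroid has the trivial cocycle $(0,0,0)$ and thus comes with its canonical zero trivialization, which is a genuine isotropic structure on the corresponding quotient. To promote this correspondence to the level of morphisms, I would observe that in $\TCAlgd(X)$ the 1-morphisms of untwisted Courant algebroids consist of pairs $(g, H)$ with $dH = 0$ and two-morphisms $B$ with $\ddr B = 0$, whereas \autoref{cor:exact-courant}---no wait, just working directly: the isotropic data on the source and target impose cochain-level constraints that force the extra differential-form freedom of $\TCAlgd$-morphisms to be absorbed by the trivializations, leaving precisely the $\CAlgd$-morphisms up to strict equivalence. This gives the functor $\SympIso_2(X) \to \CAlgd(X)$ and its inverse.

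The final clause follows immediately: a twisted Courant algebroid $\cE$ is equivalent to an untwisted one precisely when the obstruction cocycle $(B_{ijk}, H_{ij}, K_i)$ admits a trivialization, i.e.~when $[\cE] = 0 \in \coH^2(X, \Omega^{\geq 2}_X)$. The main obstacle is the bookkeeping of matching the \emph{homotopy coherence} of the isotropic data with the 2-groupoid structure on $\TCAlgd(X)$; in particular, one must check that two isotropic structures differing by a closed element of $\Omega^{\ge 2}(X)$ produce isomorphic untwisted Courant algebroids, and that the simplicial contractibility of the space of trivializations (whenever nonempty) matches the rigidity of $\CAlgd(X)$ as a 1-groupoid. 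This amounts to verifying that the fibres of the forgetful map $\SympIso_2(X) \to \Symp_2(X)$ over a fixed symplectic algebroid are either empty or contractible, which follows from the acyclicity of the cone on $\pi^* : \Omega^{\geq 2}(\XL) \to \Omega^{\geq 2}(X)$ restricted to the trivializing locus.
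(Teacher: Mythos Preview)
Your approach is genuinely different from the paper's, and while the core idea is sound, the execution has real gaps.

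The paper's proof is much shorter. It works affine-locally and exploits the normalized complex of \autoref{thm:reduced}: by construction $\Omcl^2(\UL) = \Pot^{1}(\UL) \oplus \Omega^{\ge 2}(U)$, so the projection $\Omcl^2(\UL) \to \Omega^{\ge 2}(U)$ is \emph{surjective} on the nose. Hence the homotopy fibre over $0$ equals the strict fibre, and $\SympIso_2(U)$ is identified with the sub-$\infty$-groupoid of $\Symp_2(U)$ where one simply sets $K=0$ on objects, $H=0$ on 1-morphisms, and $B=0$ on 2-morphisms. Under \autoref{thm:shift2} this is exactly the inclusion $\CAlgd(U) \subset \TCAlgd(U)$. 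No \v{C}ech bookkeeping is needed, and the equivalence at the level of morphisms is immediate.

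Your argument, by contrast, tries to work globally: use the primitive $(\beta_{ij},\gamma_i)$ to untwist $\cE$ by explicit 1- and 2-morphisms in $\TCAlgd(X)$. This does give essential surjectivity, and is a nice way to see the last clause of the proposition. But your treatment of full faithfulness is incomplete. The sentence beginning ``no wait, just working directly'' and the claim that the extra form freedom is ``absorbed by the trivializations'' is not an argument; you need to actually exhibit a functor on morphisms and check it is an equivalence. More seriously, your final reduction to the claim that the fibres of $\SympIso_2(X) \to \Symp_2(X)$ are empty or contractible is not justified: the fibre over $(\cL,\omega)$ is the space of primitives for $\pi^*\omega$, and you assert this follows from ``acyclicity of the cone on $\pi^*\colon \Omega^{\ge 2}(\XL)\to \Omega^{\ge 2}(X)$ restricted to the trivializing locus'', which is neither a precise statement nor obviously true (the map $\pi^*$ is a quasi-isomorphism on full de Rham complexes, but not on the truncations $\Omega^{\ge p}$). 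The paper avoids this entirely by replacing the homotopy fibre with a strict fibre \emph{before} any of this arises.
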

\begin{proof}
It is enough to establish the claim for affine manifolds $U$.  The data of a two-shifted symplectic structure on an $L_\infty$ algebroid $\cL$ and an isotropic structure on $U \rightarrow \UL$ is equivalent to the data of a non-degenerate closed $(2,2)$-form in the homotopy fibre of the projection
\[
\Omcl^2(\UL) \to  \Omega^{\ge 2}(U).
\]
But by construction, this morphism is surjective; hence the homotopy fibre is equivalent to the strict fibre.  It follows that the $\infty$-groupoid $\SympIso_2(U)$ is equivalent to the subgroupoid of $\Symp_2(U)$ in which we set all differential forms that live purely on $U$ to zero.  This corresponds to setting $K=0$ on the level of objects, $H=0$ on the level of morphisms and $B=0$ on the level of 2-morphisms.  Via \autoref{thm:shift2}, this subgroupoid is naturally identified with the subgroupoid $\CAlgd(U) \subset \TCAlgd(U)$ of untwisted Courant algebroids.
\end{proof}

As a special case, recall that a Courant algebroid is \defterm{exact} if the anchor and its dual give an exact sequence of vector bundles
\[
\xymatrix{
0 \ar[r] & \cT^\vee_X \ar[r]^{a^*} & \cE \ar[r]^a & \cT_X \ar[r] & 0.
}
\]
Equivalently, the anchor $\cL \to \cT_X$ of the corresponding two-shifted symplectic algebroid is a quasi-isomorphism.

Thus exact Courant algebroids on $X$ are the same thing as two-shifted symplectic structures on $[X/\cT_X]$ together with an isotropic structure on the quotient $X \to [X/\cT_X]$.  But the tangent complex of $[X/\cT_X]$ is contractible, and hence the only symplectic structure is the trivial one.  Nevertheless, isotropic structures can be nontrivial: they are primitives for the zero element in $\Z^{2}(X,\Omega^{\ge 2}_X)$, i.e.~cocycles in $\Z^{1}(X,\Omega^{\ge 2}_X)$, and equivalences are provided by coboundaries.  In this way, we obtain a symplectic interpretation of  \v{S}evera's cohomological classification of exact Courant algebroids:

\begin{cor}[\cite{Bressler2005,Severa1998--2000,Severa2001}]\label{cor:exact-courant}
The stack of exact Courant algebroids is equivalent to the stack $\Omega^{\ge 2} [1]$ of 1-shifted closed two-forms.  Thus an exact Courant algebroid $\cE$ on a manifold $X$ is determined up to isomorphism by a class in $H^1(X, \Omega^{\ge 2}_X)$, called its ``\v{S}evera class'', and the group of base-fixing automorphisms of $\cE$ is the additive group of global closed two-forms on $X$.
\end{cor}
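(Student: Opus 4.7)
The plan is to realize exact Courant algebroids as a simple homotopy fibre inside $\SympIso_2(X)$, leveraging the classification of \autoref{prop:shift2-iso}. First I would observe that under this correspondence, exactness of a Courant algebroid translates, via the Courant form of \autoref{lm:courantstrict}, to the requirement that the anchor $a\colon \cL \to \cT_X$ of the underlying symplectic Lie 2-algebroid $\cL = (\coT_X \to \cE)$ is a quasi-isomorphism of complexes of vector bundles.  Applying \autoref{thm:homotopytransfer} then lets me replace $\cL$ by the quasi-isomorphic tangent Lie algebroid $\cT_X$, concentrated in degree zero, so that the quotient $[X/\cL]$ becomes the de Rham stack $[X/\cT_X]$.

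Next I would exploit the fact that for $\cL = \cT_X$ the pulled-back tangent complex $\pi^*\cT_{[X/\cT_X]} \cong (\cT_X \xrightarrow{\id} \cT_X)$ is acyclic, so that $\Omega^p([X/\cT_X])$ vanishes in $\QCoh([X/\cT_X])$ for every $p\ge 1$.  Since the weight filtration on $\Omega^p([X/\cT_X])$ is bounded on each cohomological degree (for $\cL = \cT_X$ the weights and internal degrees are rigidly linked), this vanishing in $\QCoh$ lifts to acyclicity of $\Omega^p([X/\cT_X])$ as an absolute complex, and hence of $\Omega^{\ge 2}([X/\cT_X])$.  In particular, the nondegeneracy condition on any closed $(2,2)$-form is automatic, and by the proof of \autoref{prop:shift2-iso} the $\infty$-groupoid of exact Courant algebroids identifies with the homotopy fibre over zero, at cohomological degree four, of
\[
\pi^*\colon \Omega^{\ge 2}([X/\cT_X]) \to \Omega^{\ge 2}(X),
\]
realized via Dold--Kan.

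Finally, by \autoref{thm:reduced} this projection is a strict surjection with kernel $\Pot^1([X/\cT_X])$ equipped with the perturbed differential $\dpot$; the resulting short exact sequence, combined with the acyclicity just established, forces a canonical quasi-isomorphism $\Pot^1([X/\cT_X]) \simeq \Omega^{\ge 2}(X)[-1]$.  Applying Dold--Kan at cohomological degree four then produces the equivalence $\SympIso_2^{\mathrm{ex}}(X) \simeq |\Omega^{\ge 2}(X)|[1]$, matching the stack $\Omega^{\ge 2}[1]$ of 1-shifted closed two-forms, and reading off its homotopy groups yields the \v{S}evera classification $\pi_0 = \coH^1(X,\Omega^{\ge 2}_X)$ together with the automorphism group $\pi_1 = \coH^0(X,\Omega^{\ge 2}_X) = \Z^2(X,\Omega^2_X)$.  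The hardest step will be the upgrade from $\QCoh$-vanishing to absolute acyclicity of $\Omega^p([X/\cT_X])$: vanishing in $\QCoh$ only controls the associated graded for the weight filtration, and one must be careful to ensure that the boundedness of this filtration on each cohomological degree is enough for acyclic associated graded to imply acyclicity of the total complex.
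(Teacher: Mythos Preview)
Your proposal is correct and follows essentially the same route as the paper: identify exact Courant algebroids with objects of $\SympIso_2(X)$ whose underlying algebroid is quasi-isomorphic to $\cT_X$, use contractibility of $\cT_{[X/\cT_X]}$ to reduce to the homotopy fibre of $\Omega^{\ge 2}([X/\cT_X]) \to \Omega^{\ge 2}(X)$, and read off the answer as $\Omega^{\ge 2}(X)[1]$. The paper's treatment is extremely terse (just the paragraph preceding the corollary), and you have correctly identified and addressed the one genuine technical point the paper leaves implicit---namely, that contractibility of the tangent complex in $\QCoh$ must be promoted to acyclicity of $\Omega^{\ge 2}([X/\cT_X])$ as an honest complex, which follows from boundedness of the weight filtration in each degree when $\cL=\cT_X$ is concentrated in degree zero.
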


\subsection{Examples}
\label{sec:shift2-ex}
\subsubsection{The transitive case}
\label{sec:transitive}

A twisted Courant algebroid is \defterm{transitive} if its anchor map is surjective.  This is evidently equivalent to requiring that the $L_\infty$ algebroid $\cT_X \to \cE$ is a Lie 1-algebroid $\cE/\cT_X$.  Thus transitive twisted Courant algebroids are the same thing as classical Lie algebroids $\cL$ equipped with two-shifted symplectic structures as described in \autoref{sec:symplectic-transitive}.  From \autoref{prop:shift2-iso}, we immediately obtain the following result.
\begin{cor}[\cite{Bressler2007,Severa1998--2000}]\label{cor:pontryagin}
A quadratic Lie algebroid can be extended to a transitive Courant algebroid if and only if its first Pontryagin class vanishes.
\end{cor}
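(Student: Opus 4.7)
The plan is to recognize that this corollary follows almost immediately by stringing together three results already established in the paper: the transitive example of Section 4.2.2, the classification theorem \autoref{thm:shift2}, and the isotropic classification \autoref{prop:shift2-iso}.

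First I would unwind what ``transitive Courant algebroid extension'' means in the language of the paper. A transitive twisted Courant algebroid $\cE$ on $X$ has, by definition, a surjective anchor, so the associated two-term $L_\infty$ algebroid $\cT^\vee_X \to \cE$ is quasi-isomorphic to the honest Lie $1$-algebroid $\cL = \cE/\cT_X^\vee$. Conversely, a quadratic Lie algebroid $\cL$ (transitive, with invariant nondegenerate pairing on $\g = \ker a_\cL$) produces a $2$-shifted symplectic structure $\omega$ on $\XL$ by \autoref{prop:2-shift-transitive}, and by the construction following that proposition, its pullback class in $\coH^2(X,\Omega^{\ge 2}_X)$ is exactly the first Pontryagin class.

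Next, applying \autoref{thm:shift2} to this $2$-shifted symplectic structure produces a transitive twisted Courant algebroid whose twisting class coincides with the pullback class, i.e.~with the first Pontryagin class of $\cL$. At this point the question ``does $\cL$ extend to a transitive Courant algebroid?'' translates into ``is the twisted Courant algebroid associated to $(\cL,\omega)$ equivalent to an untwisted one?''. The isotropic structure on the quotient map $X \to \XL$ needed to apply \autoref{prop:shift2-iso} is automatic in the transitive setting, because the anchor of the associated two-term algebroid $(\cT^\vee_X \to \cE)$ is a quasi-isomorphism (as in the discussion of exact Courant algebroids just before \autoref{cor:exact-courant}), so $U \to \UL$ is locally null-homotopic.

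Finally, \autoref{prop:shift2-iso} says a twisted Courant algebroid is equivalent to an untwisted one precisely when its twisting class in $\coH^2(X,\Omega^{\ge 2}_X)$ vanishes. Combining this with the identification of twisting class and Pontryagin class yields the corollary. The only mildly delicate point, which I would treat carefully, is checking that the extension/quotient correspondence $\cE \leftrightarrow \cE/\cT^\vee_X$ is functorial enough to match the equivalence of groupoids in \autoref{prop:shift2-iso}; but this is essentially bookkeeping since transitivity is preserved under the equivalences of \autoref{thm:shift2}, and the underlying bundle $\g$ with its pairing is recovered in either direction as $\ker a / \mathrm{im}\, a^*$.
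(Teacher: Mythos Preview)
Your overall strategy---\autoref{prop:2-shift-transitive}, then \autoref{thm:shift2}, then \autoref{prop:shift2-iso}---is exactly the paper's approach; the corollary is stated there as an immediate consequence of \autoref{prop:shift2-iso} after identifying transitive twisted Courant algebroids with quadratic Lie algebroids.

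However, your third paragraph contains a genuine error that you should remove. You claim that in the transitive case the anchor of the two-term algebroid $(\cT^\vee_X \to \cE)$ is a quasi-isomorphism, making the isotropic structure on $X \to \XL$ automatic. This is false: the anchor $\cL \to \cT_X$ is a quasi-isomorphism precisely when the sequence $\cT^\vee_X \to \cE \to \cT_X$ is exact, i.e.~when $\cE$ is an \emph{exact} Courant algebroid, not merely transitive. For a general transitive $\cE$ the cohomology of $\pi^*\cT_\XL$ in degree $-1$ is $\g = \ker a / \mathrm{im}\, a^* \neq 0$. Worse, if the isotropic structure really were automatic, then by \autoref{prop:shift2-iso} every transitive twisted Courant algebroid would be untwisted and the Pontryagin class would always vanish---contradicting the content of the corollary.

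Fortunately this paragraph is unnecessary. You do not need to \emph{produce} an isotropic structure; you only need the second sentence of \autoref{prop:shift2-iso}, which says directly that a twisted Courant algebroid is equivalent to an untwisted one if and only if its twisting class vanishes. Delete the paragraph and the argument is complete and matches the paper.
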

If $\g \subset \cL$ is the kernel of the anchor map and $U \subset X$ is an affine open subset, we can split $\cL|_U = \cT_U \oplus \g$.  The twisted Courant algebroid is then given locally by $\cE|_U = \cT_U \oplus \g \oplus \coT_U$, equipped with the obvious pairing. The bracket involves the Courant bracket on $\cT_U\oplus \coT_U$, the Lie bracket on $\g$ and the curvature of the splitting $\cT_U \to \cL$.  We refer the  reader to \cite{Bressler2007,Severa1998--2000} for the explicit formulae.

\subsubsection{Atiyah algebroids of perfect complexes}

Recall that a perfect complex on $X$ is a complex of quasi-coherent sheaves that is locally equivalent to a finite complex of finite rank vector bundles.   The classifying stack of perfect complexes carries a two-shifted symplectic structure~\cite{Pantev2013}.  So by analogy with the case $G$-bundles discussed in \autoref{sec:transitive}, we expect the Atiyah algebroid of a perfect complex $\cF = \cF^\bullet \in \Perf(X)$  to carry a two-shifted symplectic structure modeling the formal completion of $\Perf$ along the map $\cF : X \to \Perf$.  We refer the reader to \cite[Section 10.1]{Huybrechts1997} for an introduction to Atiyah algebroids of perfect complexes.

The Atiyah algebroid $\cL = \cL(\cF)$ sits in an exact triangle
\[
\xymatrix{
\mathbb{R}\cEnd(\cF) \ar[r] & \cL \ar[r] & \cT_X \ar[r] & \mathbb{R}\cEnd(\cF)[1],
}
\]
with the derived endomorphisms of $\cF$.  Thus $\cT_{[X/\cL]}$ is isomorphic to the complex on $\XL$ determined by the natural action of $\cL$ on $\mathbb{R}\cEnd(\cF)[1]$.  This complex carries the nondegenerate trace pairing, giving a two-shifted symplectic structure $\omega$ on $[X/\cA]$ with pullback
\[
[\pi^*\omega] = \mathrm{ch}_2(\cF) \in \coH^2(X,\Omega^{\ge 2}_X),
\]
the degree-two part of the Chern character.

But there is a subtlety: the Atiyah algebroid is not, in general, an $L_\infty$ algebroid: it will typically have cohomology in positive degrees, and thus be a \emph{derived} $L_\infty$ algebroid.  This corresponds geometrically to the fact that the stack $\Perf$ is quite singular.

So in order to obtain (underived) twisted Courant algebroids, we must rule out cohomology in positive degrees.  The long exact sequence in cohomology gives
\begin{align}
0 \to \cExt^{0}(\cF,\cF) \to \cH^0(\cL) \to \cT_X \to \cExt^{1}(\cF,\cF) \to \cH^1(\cL) \to 0 \label{eq:atiyah-ext}
\end{align}
and isomorphisms $\cH^i(\cL) \cong \cExt^{i}(\cF,\cF)$ for $i \ne 0,1$.   We remark that  $\cExt^1(\cF,\cF)$ is the sheaf of infinitesimal deformations of $\cF$ and the map $\cT_X \to \cExt^1(\cF,\cF)$ gives the infinitesimal deformations that arise by pulling back $\cF$ along flows of vector fields.  From the exact sequence, we obtain the following

\begin{prop}
Let $\cF$ be  a perfect complex on $X$.  Then the  Atiyah algebroid of $\cF$ is an $L_\infty$ algebroid if and only if 
\begin{align*}
\cExt^{i}(\cF,\cF) &= 0 \textrm{ for } i > 1
\end{align*}
and the natural map
\[
\cT_X \to \cExt^1(\cF,\cF),
\]
is surjective.  In this case, there is a canonical twisted Courant algebroid associated to $\cF$, whose twisting class is $\mathrm{ch}_2(\cF)$.
\end{prop}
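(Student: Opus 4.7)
The plan is to separate the two assertions. The first, a cohomological criterion for the Atiyah algebroid $\cL(\cF)$ to be an $L_\infty$ algebroid, I would extract directly from the long exact sequence \eqref{eq:atiyah-ext} and its continuation. The second, identifying the resulting twisted Courant algebroid together with its twisting class, then follows immediately from \autoref{thm:shift2} applied to the two-shifted symplectic structure $\omega$ already constructed in the paragraph preceding the proposition.

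For the first claim, recall that an $L_\infty$ algebroid in our sense is a bounded complex of vector bundles concentrated in non-positive degrees. The Atiyah construction canonically produces a bounded complex of vector bundles representing $\cL(\cF)$, so the only possible obstruction is the presence of cohomology in positive degrees. Since $\cT_X$ sits in degree zero, the exact triangle $\mathbb{R}\cEnd(\cF) \to \cL(\cF) \to \cT_X$ yields $\cH^i(\cL(\cF)) \cong \cExt^i(\cF,\cF)$ for $i \geq 2$, while \eqref{eq:atiyah-ext} identifies $\cH^1(\cL(\cF))$ with $\coker(\cT_X \to \cExt^1(\cF,\cF))$. Hence the two stated hypotheses together are equivalent to $\cH^i(\cL(\cF)) = 0$ for all $i \geq 1$. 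Under these hypotheses, I would argue locally on an affine open cover: since surjections of vector bundles split there, one iteratively replaces the top-degree term by the kernel of its outgoing differential---a locally free subsheaf of a vector bundle, hence itself a vector bundle---to obtain a quasi-isomorphic bounded complex of vector bundles concentrated in non-positive degrees. \autoref{thm:homotopytransfer} then transfers the $L_\infty$ algebroid structure to this strict model, and the descent description of $\LA(X)$ assembles the local models into a global $L_\infty$ algebroid.

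For the second claim, once $\cL(\cF)$ is an $L_\infty$ algebroid, the two-shifted symplectic form $\omega$ arising from the trace pairing places $(\cL(\cF),\omega)$ in $\Symp_2(X)$, and \autoref{thm:shift2} canonically produces a twisted Courant algebroid on $X$. The final clause of \autoref{thm:shift2} identifies its twisting class with $[\pi^*\omega]$, which is $\mathrm{ch}_2(\cF) \in \coH^2(X,\Omega^{\geq 2}_X)$ by the discussion preceding the proposition (itself a form of Chern--Weil theory for the second component of the Chern character).

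The main obstacle I anticipate lies in the strictification step of the first claim: one must verify that the local truncations, together with their transferred $L_\infty$ structures and the transferred symplectic forms, really do glue into a canonical global object of $\Symp_2(X)$ rather than merely a collection of quasi-isomorphic local pieces. This should follow from the sheaf-theoretic character of \autoref{thm:homotopytransfer} combined with the descent property defining $\LA(X)$ and $\Symp_2(X)$, but the bookkeeping of higher coherences---especially ensuring that the symplectic gluing data on double and triple overlaps represent the same Chern--Weil cocycle for $\mathrm{ch}_2(\cF)$---deserves careful attention to guarantee true canonicity.
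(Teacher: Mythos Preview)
Your proposal is correct and follows essentially the same approach as the paper, which in fact leaves the proof implicit: the proposition is stated immediately after the exact sequence \eqref{eq:atiyah-ext} with the phrase ``From the exact sequence, we obtain the following,'' so the paper relies on exactly the cohomological reading you give, together with the preceding paragraph identifying $[\pi^*\omega]=\mathrm{ch}_2(\cF)$ and the equivalence of \autoref{thm:shift2}. One small wording issue: the sentence ``a locally free subsheaf of a vector bundle, hence itself a vector bundle'' is not a valid inference in general; what you actually need (and what holds) is that the kernel of a \emph{surjection} of vector bundles is a vector bundle, which is the standard argument already used in \autoref{lm:symp-ampl}.
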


Note that, while the Atiyah algebroid of a principal bundle is transitive, this is not the case for general perfect complexes.  Since the Atiyah algebroid represents the infinitesimal symmetries of the complex, its orbits are related to the stratification of $X$ by the singularities of $\cF$.

\subsubsection{Codimension-two cycles}

In the complex analytic or algebraic settings, the appearance of $H^2(X,\Omega^2_X)$ suggests a link between twisted Courant algebroids and codimension-two cycles.  Indeed, this is a special case of the previous example, as we now explain.

Suppose that $X$ is a complex manifold or smooth algebraic variety, and let $Y \subset X$ be a smooth subvariety of pure codimension two.  Let $\cI \subset \cO_X$ be the ideal sheaf of $Y$.   Then   $\cI$ is a coherent sheaf on $X$, and since $X$ is smooth, $\cI$ is perfect.  We recall the standard canonical isomorphisms
\begin{align}
\cExt^i(\cI,\cI) \cong \begin{cases}
\cO_X & i=0 \\
\cN_Y & i=1 \\
0 & \textrm{otherwise}
\end{cases}\label{eq:ext-normal}
\end{align}
where $\cN_Y$ is the normal bundle of $Y$, viewed as a coherent sheaf on $X$.  The connecting homomorphism
\[
\xymatrix{
\cT_X \ar[r] & \cExt^1(\cI,\cI) \cong \cN_Y
}
\]
is the natural projection of $\cT_X$ onto the normal bundle, which is surjective.  We remark that, although \eqref{eq:ext-normal} holds for an arbitrary local complete intersection, the surjectivity of the connecting homomorphism really requires $Y$ to be smooth.

We conclude that the Atiyah algebroid $\cL = \cL(\cI)$ is quasi-isomorphic to its zeroth cohomology, i.e.~$\cL$ is a coherent sheaf sitting purely in degree zero. But this sheaf is not a vector bundle; it has Tor amplitude $[-1,0]$, so that it is an honest Lie 2-algebroid.  From \eqref{eq:atiyah-ext}, we have an exact sequence
\[
\xymatrix{
0 \ar[r] & \cO_X \ar[r] & \cL \ar[r] & \cT_X(-\log Y) \ar[r] & 0
}
\]
where $\cT_X(-\log Y) \subset \cT_X$ is the kernel of the projection $\cT_X \to \cN_Y$, i.e.~the subsheaf of vector fields that are tangent to $Y$.  Thus the orbits of $\cL$ are the connected components of $Y$ and its complement. Since  $\mathrm{ch}_2(\cI)$ is, up to sign, the class $[Y]$ Poincar\'e dual to $Y$, we arrive at the
\begin{thm}
Let $Y \subset X$ be a smooth codimension-two subvariety in a complex manifold or smooth algebraic variety.  Then there is a canonical twisted Courant algebroid on $X$ whose twisting class is $-[Y] \in \coH^2(X,\Omega^{\ge 2}_X)$, and whose orbits are the connected components of $Y$ and its complement.
\end{thm}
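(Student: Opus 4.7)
The plan is to deduce the theorem as a direct consequence of the preceding proposition on Atiyah algebroids of perfect complexes, applied to the ideal sheaf $\cF = \cI$ of $Y$. Since $X$ is smooth and $\cI$ is a coherent sheaf, it admits locally a finite free resolution (so it is perfect), and the construction of the preceding proposition applies once we verify the two hypotheses: vanishing of $\cExt^i(\cI,\cI)$ for $i > 1$ and surjectivity of the connecting map $\cT_X \to \cExt^1(\cI,\cI)$.

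First I would establish the Ext computation \eqref{eq:ext-normal}. This is local, so one may assume $Y$ is cut out by a regular sequence $f_1, f_2 \in \cO_X$ and use the Koszul resolution of $\cI$. A direct calculation with this resolution gives $\cExt^0(\cI,\cI) \cong \cO_X$, $\cExt^1(\cI,\cI) \cong \cN_Y$, and vanishing in higher degrees; smoothness of $Y$ is what ensures the regularity of the sequence globally, giving a genuine normal \emph{bundle} rather than a more singular sheaf. The connecting homomorphism $\cT_X \to \cExt^1(\cI,\cI) \cong \cN_Y$ is then identified with the canonical projection onto the normal bundle, which is surjective precisely because $Y$ is smooth. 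With these facts in hand, the previous proposition produces a canonical twisted Courant algebroid on $X$ associated to the Atiyah algebroid $\cL(\cI)$.

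Next I would identify the twisting class as $-[Y]$. By the preceding proposition, the twisting class is $\mathrm{ch}_2(\cI) \in \coH^2(X, \Omega^{\ge 2}_X)$. Using the short exact sequence $0 \to \cI \to \cO_X \to \cO_Y \to 0$, additivity of the Chern character gives $\mathrm{ch}(\cI) = \mathrm{ch}(\cO_X) - \mathrm{ch}(\cO_Y)$, and since $Y$ has codimension two the degree-two component of $\mathrm{ch}(\cO_Y)$ equals $[Y]$ (with $\mathrm{ch}_0$ and $\mathrm{ch}_1$ of $\cO_Y$ vanishing in that range), so $\mathrm{ch}_2(\cI) = -[Y]$ in $\coH^2(X,\Omega^{\ge 2}_X)$ under the Hodge-theoretic identification of cycle classes.

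Finally, the orbit statement follows from the already-established exact sequence
\[
\xymatrix{0 \ar[r] & \cO_X \ar[r] & \cL \ar[r] & \cT_X(-\log Y) \ar[r] & 0,}
\]
which shows that the anchor of $\cL$ factors through the subsheaf $\cT_X(-\log Y) \subset \cT_X$ of vector fields tangent to $Y$ and in fact hits all such vector fields. Since these are exactly the infinitesimal automorphisms preserving $Y$ set-theoretically, their integral leaves are the connected components of $Y$ and of its complement $X \setminus Y$. The main technical obstacle is the Chern character identification $\mathrm{ch}_2(\cI) = -[Y]$, since the sign and the passage from topological to Hodge-filtered cohomology must be tracked carefully; the Ext computation itself, while crucial, is essentially a standard Koszul argument.
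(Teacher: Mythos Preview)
Your proposal is correct and follows essentially the same route as the paper: the theorem is stated there as the conclusion of the discussion preceding it, which verifies the hypotheses of the Atiyah-algebroid proposition for $\cF=\cI$ via the Ext computation \eqref{eq:ext-normal}, identifies the connecting map with the projection $\cT_X\to\cN_Y$, reads off the orbits from the exact sequence with $\cT_X(-\log Y)$, and invokes $\mathrm{ch}_2(\cI)=-[Y]$. Your added justification of the last identity via additivity on $0\to\cI\to\cO_X\to\cO_Y\to 0$ is a helpful elaboration of a step the paper leaves implicit.
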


This twisted Courant algebroid is always locally equivalent to a Courant algebroid, which we can describe concretely as follows.  On a sufficiently small affine open subset $U \subset X$, we can find a flat rank-two vector bundle $(\cV,\nabla)$ and a section $s \in H^0(U,\cV)$ whose zero scheme is $Y \cap U$.  In this way we obtain the Koszul resolution
\[
\cI|_U \cong (\xymatrix{ \det \cV^\vee \ar[r]^-{s}& \cV^\vee }),
\]
which we may use to compute the derived endomorphisms $\mathbb{R}\cEnd(\cI|_U)$ and the Atiyah algebroid $\cL|_U$.

In degree zero, we get $\g = \mathbb{R}\cEnd^0(\cI|_U)= \cEnd(\cV) \oplus \cO_U$ with the obvious Lie bracket, and with pairing given by the difference of the trace pairings.  The connection on $\cV$ allows us to identify $\cL|_U = \cT_U\oplus \g$ so that we get a transitive Courant algebroid 
\[
\cE_0 = \cT_U \oplus \g \oplus \cT^\vee_U,
\]
as in \autoref{sec:transitive}.

Identifying the degree-one piece of $\cL|_U$ with the bundle $\RR\cEnd^1(\cI|_U) = \cV$, the differential on $\cL|_U$ is given by the map
\[
\mapdef{\delta}{ \cT_U\oplus \g}{ \cV}
{ (\xi,(\phi,f))  }{\nabla_\xi s+ \phi s - fs }
\]
for $\xi \in \cT_U$, $(\phi,f) \in \g$.  Because $s$ vanishes transversely, this map is surjective. 

Now $\delta$ evidently extends to a surjection $\cE_0 \to \cV$ whose kernel $\cK \subset \cE_0$ is a coisotropic subbundle that is preserved by the Courant bracket.  The annihilator $\cK^\perp \subset \cK$ is the image of $\cV^\vee$ under the dual map $\cV^\vee \to \cE^\vee \cong \cE$.  In this way, we obtain the desired Courant algebroid by coisotropic reduction:
\begin{align*}
\cE = \cK/\cK^\perp = \cH^0(\cV^\vee \to \cE_0 \to \cV)
\end{align*}
which its induced bracket, anchor and pairing.

\section{Two-shifted Lagrangians}
\label{sec:lagrangians}

Let $\cL$ be a two-shifted symplectic algebroid on $X$, and let $\cE$ be the corresponding twisted Courant algebroid.   Although the projection map $X\rightarrow [X/\cL]$ may be isotropic as above, it is essentially never Lagrangian.  Indeed, we recall from \autoref{ex:lag-quotient} that the Lagrangian condition forces  $\cL \cong \cT^\vee_X[1]$, which means that $\cE = 0$.  Nevertheless, there may be many Lagrangians of the form
\[
[Y/\cM]\rightarrow [X/\cL]
\]
where $Y \subset X$ is a closed submanifold, and $\cM$ is an $L_\infty$ algebroid on $Y$.  In this section, we give a classification of such Lagrangians in terms of twisted Dirac structures in twisted Courant algebroids. 

\subsection{Twisted Dirac structures}

Let $X$ be a manifold, and let $f \colon Y \to X$ be the inclusion of a closed submanifold.  Suppose that $\cE$ is a twisted Courant algebroid on $X$ whose twisting cocycle lies in the relative de Rham complex $\Omega^\bullet_{X,Y} = \ker(\Omega^\bullet_X \to \Omega^\bullet_Y)$.

The restriction $f^*\cE$ is a twisted vector bundle on $Y$ equipped with a nondegenerate symmetric pairing, and so it makes sense to speak of twisted subbundles $\cF \subset f^*\cE$ that are isotropic or Lagrangian. Here, by a ``twisted subbundle'', we mean that on any affine chart, $\cF$ is a subbundle of $\cE$, and on the overlap of two charts, the subbundles are preserved by the transition functions of $\cE$.

 Applying the anchor to such a twisted subbundle, one obtains a subsheaf $a(\cF) \subset f^*\cT_X$.  We say that $\cF$ is \defterm{compatible with the anchor} if $a(\cF)\subset \cT_Y$.  In this case, it is easy to see that on any affine chart $U \subset X$ there is a well-defined bracket
\[
\cour{-,-}\colon \cF \times \cF \to f^*\cE
\]
defined by restriction of the Courant bracket on $\cE$.  In particular, it makes sense to ask if $\cF$ is \defterm{involutive}, i.e.~$\cour{\cF,\cF}\subset \cF$.  This allows us to extend the definition of a Dirac structure with support~\cite{Alekseev,Bursztyn2009,Severa2005} to the twisted setting:

\begin{defn}
Let $f \colon Y \to X$ be an embedding of a closed submanifold.  A \defterm{twisted Dirac pair on $(X,Y)$} is a pair $(\cE,\cF)$ consisting of a twisted Courant algebroid $\cE$ whose twisting cocycle lies in $\Omega^{\ge 2}_{X,Y} \subset \Omega^{\ge 2}_X$, and a twisted Lagrangian subbundle $\cF \subset f^*\cE$  that is compatible with the anchor and involutive.
\end{defn}

Twisted Dirac pairs are the objects of a natural 2-groupoid $\TDirac(X,Y)$.  For pairs $(\cE,\cF)$ and $(\cE',\cF')$ the morphisms are given by the subgroupoid
\[
\Hom_{\TDirac(X,Y)}((\cE,\cF),(\cE',\cF')) \subset \Hom_{\TCAlgd(X)}(\cE,\cE')
\]
consisting of the morphisms in $\TCAlgd$ that preserve the twisted subbundles, and for which all form data lie in $\Omega^{\ge 2}_{X,Y}\subset \Omega^{\ge 2}_X$.
\begin{remark}
The natural forgetful map $\TDirac(Y, X)\rightarrow \TCAlgd(X)$ is not a fibration, so to define the space of Dirac structures in a fixed twisted Courant algebroid $\cE \in \TCAlgd(X)$, we must take its homotopy fibre in $\TDirac(X,Y)$ instead of its strict fibre.  For example, we should choose an isomorphism of $\cE$ with an equivalent model $\cE'$ for which the twisting cocycle lies in $\Omega^2_{X,Y}\subset \Omega^2_X$. \qed
\end{remark}

If $(\cE,\cF)$ is a Dirac pair on $(X,Y)$, the anchor gives a morphism
\[a^* \colon \cN^\vee_Y\rightarrow \cF.\]
Indeed, consider the diagram
\[
\xymatrix{
0 \ar[r] & \cN^\vee_Y \ar[r] \ar@{-->}[d] & f^* \cT^\vee_X \ar[r] \ar^{a^*}[d] & \cT^\vee_Y \ar[r] \ar^{a^\vee}[d] & 0 \\
0 \ar[r] & \cF \ar[r] & f^*\cE\ar[r] & \cF^\vee \ar[r] & 0
}
\]
The top sequence is exact by definition. The bottom sequence is exact since $\cF\subset f^*\cE$ is Lagrangian. Therefore, we get a unique morphism $\cN^\vee_Y\rightarrow \cF$ denoted by the dashed arrow.

The twisting class of the pair $(\cE,\cF)$ is evidently a refinement of the twisting class $[\cE] \in \coH^2(\Omega^{\ge 2}_X)$ to a class in relative cohomology:
\[
[\cE,\cF] \in \coH^2(\Omega^{\ge 2}_{X,Y}).
\]
Notice that the exterior power of the exact sequence
\[
\xymatrix{
0 \ar[r] & \cN^\vee_Y \ar[r] & f^*\Omega^1_X \ar[r] & \Omega^1_Y \ar[r] & 0
}
\]
gives rise to a natural projection $\Omega^2_{X,Y} \to\Omega^1_Y \otimes \cN^\vee_Y$. The image of $[\cE,\cF]$ under the resulting map
\[
\coH^2(\Omega^{\ge 2}_{X,Y}) \to \coH^2(Y,\Omega^1_Y\otimes \cN^\vee_Y)
\]
measures the twisting of the transition functions of $\cF$, as is evident from the formula~\eqref{eqn:bundle-twist} for the twisting of $\cE$.

\subsection{Classification of two-shifted Lagrangians}

We now prove the following classification of two-shifted Lagrangians:

\begin{thm}
Let $Y \subset X$ be a closed submanifold.  Then the $\infty$-groupoid $\Lag_2(X,Y)$ parametrizing two-shifted symplectic  $L_\infty$ algebroids $\cL$ on $X$ together with a  Lagrangian $[Y/\cM]\rightarrow [X/\cL]$ is equivalent to the 2-groupoid $\TDirac(X,Y)$ of twisted Dirac pairs.
\label{thm:shift2lagrangian}
\end{thm}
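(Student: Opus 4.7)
The proof follows the pattern of Theorem \ref{thm:shift2}: reduce to the affine case, strictify both the source algebroid $\cM$ and the Lagrangian data by homotopy transfer, identify the resulting normal form with a twisted Dirac pair, and glue.

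First I would assume $X$ and $Y$ are affine. By Proposition \ref{lm:courantstrict} I may take $\cL$ in Courant form, so $\cL \cong (\coT_X \to \cE)$ for a twisted Courant algebroid $\cE$, and hence $f^*\cT_{[X/\cL]}$ is the restriction to $Y$ of the three-term complex $(\coT_X \to \cE \to \cT_X)$. I would then put $\cM$ into a parallel normal form by adapting the trick in the proof of Proposition \ref{lm:courantstrict}: extend $\cM$ by the acyclic mapping cone of the identity on $\cN^\vee_Y$ and twist by a canonical potential, so that $\cM$ becomes quasi-isomorphic to a Lie 2-algebroid of the form
\[
\cM \cong (\cN^\vee_Y \to \cF)
\]
with $\cF$ a vector bundle on $Y$ carrying a bundle map $\cF \to f^*\cE$.

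Next I would analyze the Lagrangian condition. It amounts to a nullhomotopy of $f^*\omega$ inside the normalized complex $\Omcl^2([Y/\cM])$ that induces a quasi-isomorphism $\cN_f \cong \coT_{[Y/\cM]}[2]$. By Theorem \ref{thm:reduced} this nullhomotopy decomposes into a potential part $\tau$ on $[Y/\cM]$ together with a three-form $H \in \Omega^3(Y)$ satisfying $\ddr H = f^*K$; thus the cocycle representative $(B_{ijk},H_{ij},K_i)$ of the twisting class of $\cE$ gets upgraded to a cocycle in the relative complex $\Omega^{\ge 2}_{X,Y}$, as required by the definition of $\TDirac$. Matching the remaining weight components of the quasi-isomorphism against the pulled-back version of \eqref{eqn:2-shift-tan}, the map $\cF \hookrightarrow f^*\cE$ is forced to be a Lagrangian subbundle compatible with the anchor in the sense that $a(\cF)\subset \cT_Y$, and the induced connecting map $\cN^\vee_Y \to \cF$ is precisely the morphism $a^*$ appearing in the definition of $\TDirac$. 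The remaining closure equations for the nullhomotopy, decomposed exactly as in \eqref{eq:2sympalgd1}--\eqref{eq:2sympalgd4}, then translate directly into the involutivity $\cour{\cF,\cF} \subset \cF$ for the Courant--Dorfman bracket of $\cE$.

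The analysis at the level of $1$- and $2$-morphisms is strictly parallel to the computations in Section \ref{sec:2shift-obj}; the only new feature is that every connecting differential form on $X$ comes equipped with a trivialization of its pullback to $Y$, so that all data lives in the relative subcomplex $\Omega^{\ge 2}_{X,Y}$. Globalization then follows by taking homotopy limits over affine covers exactly as in the proof of Theorem \ref{thm:shift2}: the twisted bundle gluing maps for $\cF$ and the higher homotopies live in $\Omega^{\ge 2}_{X,Y}$, which is precisely the condition built into $\TDirac(X,Y)$. The main obstacle I expect is the simultaneous normal-form step for $\cM$: one must choose the acyclic extension and the potential $\tau$ so that the induced map $\cF \to f^*\cE$ lands in a genuine Lagrangian subbundle and that the primitive $H$ is compatible with the chosen trivialization of $f^*K$. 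Once this careful choice is in place, the translation of closure axioms into the Dirac conditions becomes a parallel, though more intricate, version of the object-morphism-$2$-morphism computation in the proof of Theorem \ref{thm:shift2}.
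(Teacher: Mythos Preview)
Your proposal follows essentially the same strategy as the paper: reduce to the affine case, put $\cL$ in Courant form, strictify $\cM$ by an acyclic extension, read off the Dirac data, then treat $1$- and $2$-morphisms in parallel with \autoref{thm:shift2}. A few technical points deserve correction.

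First, the acyclic extension you propose is not the one that works. You want to extend $\cM$ by the cone on $\cN^\vee_Y$, but the paper extends by the cone on $f^*\coT_U$: the degree $-1$ component of the morphism $g\colon \cM \to f^*\cL$ lands in $f^*\cL_1 = f^*\coT_U$, and it is this map that must be absorbed by the twist. Only after this extension, the addition of a canonical potential, and a further truncation does the subbundle $\cF\subset f^*\cE$ emerge; the factor $\cN^\vee_Y$ then appears as the kernel in a diagram chase using the isotropic data $\tau$, not as something put in by hand. Second, the paper does not carry around a primitive $H\in\Omega^3(Y)$ with $\ddr H = f^*K$. Once $\cM$ is in the form $(\cN^\vee_V \to \cF)\hookrightarrow f^*\cL$, the pullback of forms $\Omcl^2([U/\cL])\to\Omcl^2([V/\cM])$ is surjective, so the homotopy fibre equals the strict fibre and one may take \emph{all} isotropic form data on $[V/\cM]$ to vanish identically. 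This forces $f^*K=0$ directly, which is how the twisting cocycle is pushed into $\Omega^{\ge 2}_{X,Y}$. Third, as in \autoref{thm:shift2}, the paper routes the comparison through an intermediate groupoid $\TDiracconn$ carrying a metric connection on $\cE$ together with an auxiliary tensor $\nu\in\wedge^2\cF^\vee\otimes\cN^\vee_V$; this $\nu$ absorbs the residual freedom in $\tilde g$ and is what makes the bijection with Dirac data clean. These are refinements rather than a different argument, but the first one is essential for the strictification step to go through.
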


The strategy of the proof is parallel to that of \autoref{thm:shift2}. Once again, we fix an affine manifold $U$, this time with a closed submanifold $V \subset U$, and we consider a diagram of 2-groupoids
\[
\xymatrix{
& \TDiracconn(U,V) \ar[dl] \ar^{\sim}[dr] & \\
\Lag_2(U,V) && \TDirac(U,V)
}
\]
where the objects of $\TDiracconn(U,V)$ are twisted Dirac pairs in which the twisted Courant algebroids is equipped with a metric connection and the Dirac structures is equipped with a tensor $\nu\in\wedge^2\cF^\vee\otimes \cN^\vee_V$. We will construct a functor $\TDiracconn(U,V)\rightarrow \Lag_2(U,V)$ which we will prove is an equivalence.

\subsubsection{Objects}

We begin by constructing the equivalence on the level of objects.  Using \autoref{thm:shift2} we identify
\[\cL\cong \rbrac{\xymatrix{\cT^\vee_U \ar[r]& \cE}}\]
for a twisted Courant algebroid $\cE$ on $U$, with symplectic form $\omega \in \Omcl^{2,2}(\UL)$.  It is then easy to see that a Lagrangian structure forces $\cM$ to be concentrated in degrees $-1$ and $0$.

An $L_\infty$-morphism $\cM \to \cL$ is given by morphisms
\begin{align*}
g &\colon \cM\rightarrow f^*\cL & \tg &\colon \wedge^2 \cM_0\rightarrow f^*\cT^\vee_U
\end{align*}
compatible with the anchor and satisfying \eqref{eq:2symp1mor1}--\eqref{eq:2symp1mor3}.

An isotropic structure on $[V/\cM]\rightarrow [U/\cL]$ is given by elements
\begin{align*}
\tau &\in \cM_0^\vee\otimes \Omega^1(V) & H&\in \Omega^3(V)
\end{align*}
satisfying $(f,g)^*\omega = \dtw(\tau + H)$, which gives the following equations analogous to \eqref{eq:2symp1mor4}--\eqref{eq:2symp1mor7}:
\begin{align}
f^* K &= \ddr H \label{eq:diracobj1} \\
f^* gu &= -\tau \delta u \label{eq:diracobj2} \\
\abrac{ gx, gy } &= \tfrac{1}{2}(\iota_{ax} \tau y + \iota_{ay}\tau x) \label{eq:diracobj3} \\
f^*\psi(gx, gy) - f^*\tg(x, y) &= \tau[x, y] - \lie{ax} \tau y + \lie{ay}\tau x \nonumber \\
&\ \ \ + \tfrac{1}{2}\ddr(\iota_{a x }\tau y - \iota_{a y} \tau x) + \iota_{a x}\iota_{ay} H. \label{eq:diracobj4}
\end{align}
 for $x,y \in \cM_0$ and $u \in \cM_1$.  We will now show that one can rectify the Lagrangian $[V/\cM]\rightarrow [U/\cL]$.

\begin{prop}
Let $\cL$ be a two-shifted symplectic $L_\infty$ algebroid on $X$ corresponding to a twisted Courant algebroid $\cE$ and let $[Y/\cM]\rightarrow [X/\cL]$ a Lagrangian. Then $\cM$ is quasi-isomorphic to a subcomplex
\[
\cM \cong \rbrac{\xymatrix{\cN_V^\vee \ar[r] & \cF}} \subset \rbrac{\xymatrix{f^*\coT_U \ar[r] & f^*\cE}}
\]
where $\cF \subset f^*\cE$ is a Lagrangian subbundle, and the symplectic structure vanishes identically on $\cM$.
\label{lm:2lagstrict}
\end{prop}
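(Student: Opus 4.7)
The plan is to mirror the rectification in \autoref{lm:courantstrict}, now applied to the Lagrangian data rather than to the symplectic form itself.

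First, I would bound the amplitude of $\cM$. The Lagrangian condition provides a quasi-isomorphism $\cN_f \simeq \coT_{[V/\cM]}[2]$, where $\cN_f = \Cone(\cT_{[V/\cM]} \to f^*\cT_{[U/\cL]})$. Since $f^*\cT_{[U/\cL]}$ is the three-term complex $(f^*\coT_U \to f^*\cE \to f^*\cT_U)$ of amplitude $[-2,0]$ and $\coT_{[V/\cM]}[2]$ is supported in degrees $\geq -2$, tracking cohomology through the resulting exact triangle forces $\cM$ to have cohomology concentrated in degrees $-1, 0$—an argument essentially identical to that of \autoref{lm:symp-ampl}. Applying \autoref{thm:homotopytransfer}, we replace $\cM$ by a quasi-isomorphic Lie $2$-algebroid of this form.

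Next, I would rectify the morphism and the isotropic structure via an enlargement parallel to the construction in \autoref{lm:courantstrict}. One considers
\[
\widetilde{\cM} = \cM \oplus \cN_V \oplus \cN_V[1]
\]
with a differential twisted so that the projection $\widetilde{\cM} \to \cM$ becomes a special deformation retract; then \autoref{lm:homotopytransfer1} transfers the $L_\infty$ structure and the pullback gives a Lagrangian $[V/\widetilde{\cM}] \to [U/\cL]$. Shearing the isotropic data by a canonical tautological element analogous to the $\tau \in \cT_U\otimes \coT_U$ used in \autoref{lm:courantstrict} should then trivialize the form components $(\tau, H)$ of the isotropic structure and turn $g$ into a strict morphism of complexes.

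With $\tau = 0$, equation \eqref{eq:diracobj3} collapses to $\langle g_0 x, g_0 y\rangle = 0$, so the image of $g_0$ is strictly isotropic in $f^*\cE$, and the Lagrangian nondegeneracy condition promotes this image to a Lagrangian subbundle $\cF \subset f^*\cE$. Equation \eqref{eq:diracobj2} combined with a diagram chase on $\cN_f \simeq \coT_{[V/\cM]}[2]$ identifies the $(-1)$-term of $\widetilde{\cM}$ with $\cN^\vee_V$ and the corresponding component of $g$ with the natural inclusion $\cN^\vee_V \hookrightarrow f^*\coT_U$. Since all form data vanishes, $g^*\omega$ is identically zero, as claimed.

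The main obstacle I anticipate is arranging the enlargement and the subsequent shear so that $(\tau, H)$ becomes genuinely zero, not merely a coboundary; this requires the correct choice of splitting in the auxiliary $\cN_V \oplus \cN_V[1]$ summand and of tautological element. The remaining verifications are tedious but essentially parallel to those already carried out in \autoref{lm:courantstrict} and \autoref{thm:shift2}.
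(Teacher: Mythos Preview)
Your overall strategy is right---mirror the construction in \autoref{lm:courantstrict}---but two concrete choices would get you stuck. First, the enlargement should be by $f^*\coT_U \oplus f^*\coT_U[1]$, not by $\cN_V \oplus \cN_V[1]$. The reason is that the twisted differential on $\widetilde{\cM}$ has to use the degree $-1$ component of the morphism $g\colon \cM_1 \to f^*\cL_1 = f^*\coT_U$; there is no natural map $\cM_1 \to \cN_V$ available at this stage, so your proposed $\widetilde{\cM}$ has no candidate differential to make the projection a deformation retract. Second, the shear by the tautological element does \emph{not} make the isotropic data $(\tau,H)$ vanish: in the paper's argument, after shearing by the identity projection $\widetilde{\cM}_0 \to f^*\coT_U$, the null homotopy becomes the canonical inclusion $\cT_V \hookrightarrow f^*\cT_U$ (and its dual), which is nonzero.

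The missing step is what comes after. The paper uses the Lagrangian nondegeneracy to produce a self-dual exact triangle $\widetilde{\cF} \to \cT_V \oplus f^*\cE \oplus \coT_V \to \widetilde{\cF}^\vee$, which forces $\widetilde{\cF}$ to be concentrated in nonpositive degrees and hence quasi-isomorphic to $\cH^0(\widetilde{\cF})$. Only then does equation~\eqref{eq:diracobj2} enter: it implies that the residual $\tau \in \cH^0(\widetilde{\cF})^\vee \otimes \coT_V$ is \emph{surjective}, so one can define $\cF$ as its kernel and identify the degree $-1$ piece with $\cN_V^\vee$ via a diagram chase against the conormal sequence. The conormal bundle thus emerges at the end from this surjectivity, rather than being built in from the start as you propose. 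Your final identifications (isotropy of the image, extraction of $\cN_V^\vee$) are morally correct but depend on having reached this stage by the right route.
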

\begin{proof}
We consider the complex $\widetilde{\cM} = \cM\oplus f^*\coT_U\oplus f^*\cT^\vee_U[1]$ with differential
\[
\xymatrix{
f^*\coT_U\oplus \cM_1 \ar[r]^{\begin{psmallmatrix}\id & g \\ 0 & \d\end{psmallmatrix}} & f^*\coT_U\oplus \cM_0
}
\]
and the complex $\widetilde{\cF}=\cM\oplus f^*\cT^\vee_U$ with the differential twisted by $g$.

As in the proof of \autoref{lm:courantstrict} we get a deformation retract of the form $p\colon \widetilde{\cM}\rightleftarrows \cM\colon i$, so we can transfer the $L_\infty$ algebroid structure from $\cM$ to $\widetilde{\cM}$. The Lagrangian structure on $\widetilde{\cM}$ takes the following shape:
\[
\xymatrix{
\pi^*\cT_{[V/\widetilde{\cM}]} \ar[d] &  f^*\cT^\vee_U \oplus \cM_1 \ar[r] \ar^{(0, g)}[d] & f^*\cT^\vee_U \oplus \cM_0 \ar[r] \ar^{(0, g)}[d] \ar@{=>}[ldd] & \cT_V \ar[d]\ar@{=>}[ldd]\\
f^*\pi^*\cT_\UL\ar[d]^{\omega} & f^* \cT^\vee_U \ar[r] \ar[d] & f^*\cE \ar[r] \ar[d] & f^*  \cT_U \ar[d] \\
\pi^*\cT_{[V/\widetilde{\cM}]}[2] & \cT^\vee_V \ar[r] & f^*\cT_U \oplus \cM_0^\vee \ar[r] & f^*\cT_U \oplus \cM_1^\vee
}
\]
with the null homotopy of the composite given by $\tau\in \cM_0^\vee\otimes \cT^\vee_V$.

 The identity operator on $\coT_U$ gives a projection $\widetilde{\cM}_0\rightarrow f^*\coT_U$, which we use as a homotopy to modify the map $\widetilde{\cM}\rightarrow \cM\rightarrow f^*\cL$. Using the formulae in \autoref{sec:2shift-2mor}, we see that the modified Lagrangian structure has the form
\[
\xymatrix{
f^*\cT^\vee_U \oplus \cM_1 \ar[r] \ar^{(\id, 0)}[d]& f^*\coT_U \oplus \cM_0 \ar[r] \ar^{(a^\vee, g)}[d] \ar@{=>}[ldd] & \coT_V \ar[d] \ar@{=>}[ldd] \\
f^* \coT_U \ar[r] \ar[d] & f^*\cE \ar[r] \ar[d] & f^*\coT_U \ar[d] \\
\coT_V \ar[r] & f^*\cT_U \oplus \cM_0^\vee \ar[r] & f^*\cT_U \oplus \cM_1^\vee
}
\]
with the null homotopy now given by the inclusion $\cT_V\subset f^*\cT_U$ and its dual.

The nondegeneracy condition on the Lagrangian $[V/\cM]\rightarrow [U/\cL]$ now implies that we have a self-dual exact triangle
\[
\xymatrix{
\widetilde{\cF} \ar[r] & \cT_V\oplus f^*\cE\oplus \cT^\vee_V \ar[r] & \widetilde{\cF}^\vee \ar[r] & \widetilde{\cF}[1]
}
\]
Since $\cT_V\oplus f^*\cE\oplus \coT_V$ is concentrated in degree zero, and $\widetilde{\cF}$ is concentrated in nonpositive degrees, we conclude that the projection $\widetilde{\cF}\rightarrow \cH^0(\widetilde{\cF})$ is a quasi-isomorphism. This allows us to replace the $L_\infty$ algebroid $\widetilde{\cM}$ by $f^*\cT_U^\vee\rightarrow \cH^0(\widetilde{\cF})$, giving a Lagrangian structure of the form
\[
\xymatrix{
f^*\cT^\vee_U \ar[r] \ar^{\id}[d] & \cH^0(\widetilde{\cF})\ar@{=>}[ldd] \ar[r] \ar[d] & \cT_V \ar[d] \ar@{=>}[ldd] \\
f^* \cT^\vee_U \ar[r] \ar[d] & f^*\cE \ar[r] \ar[d] & f^*\cT_U \ar^{\id}[d] \\
\cT^\vee_V \ar[r] & \cH^0(\tilde{\cF})^\vee \ar[r] & f^*\cT_U
}
\]
with homotopy $\tau \in \cH^0(\widetilde{\cF})^\vee \otimes \coT_V$.  From equation \eqref{eq:diracobj2} we see that $\tau$ is surjective, and hence we may define a bundle $\cF$ by the commutative diagram
\[
\xymatrix{
0 \ar[r] & \cN^\vee_V \ar[r] \ar@{-->}[d] & f^*\cT^\vee_U \ar[r] \ar[d] & \cT^\vee_V \ar[r] \ar^{\id}[d] & 0 \\
0 \ar[r] & \cF \ar[r] & \cH^0(\widetilde{\cF}) \ar^{\tau}[r] & \cT^\vee_V \ar[r] & 0
}
\]
with exact rows.  Considering the columns as morphisms of two-term complexes, we obtain the desired quasi-isomorphism $\cM \cong \rbrac{\cN^\vee_V \to \cF}$.
\end{proof}

We now assume that $\cM\cong (\cN^\vee_V\rightarrow \cF)$ as in the Proposition, so that the morphism $g\colon \cM \to f^*\cL$ is the inclusion.  The space of pairs of a closed form on $\UL$ and an isotropic structure on $\VM \to \UL$ is given by the the homotopy fibre of the projection $\Omega^\bullet(\UL)\rightarrow \Omega^\bullet(\VM)$.  Since the projection is now surjective, this is just the kernel.   Thus we may assume that all the form data on $\VM$, namely $\tau$ and $H$, are zero.

The equations  \eqref{eq:2symp1mor1}, \eqref{eq:2symp1mor2} and \eqref{eq:2symp1mor3} for an $L_\infty$ morphism now uniquely determine, the binary bracket $\cM_0\times \cM_0\rightarrow \cM_0$, the binary bracket $\cM_0\times \cM_1\rightarrow \cM_1$, and  the triple bracket $\wedge^3 \cM_0 \rightarrow \cM_1$, respectively. The isotropy condition \eqref{eq:diracobj4} then implies that
\[
\tg (x, y) = \psi(x, y) + \nu(x, y)
\]
for some $\nu\in \wedge^2\cF^\vee\otimes f^*\cN^\vee_V$.  Hence by the defining relation \eqref{eq:2sympcourant} between the Courant bracket and the binary bracket on $\cL$, the equations for the isotropic structure reduce to the single condition
\begin{align*}
[x, y]_\cM &= [x,y]_\cL - \tfrac{1}{2} a_{\cE}^* \tilde{g}(x, y) \\
&= \cour{x,y} - \tfrac{1}{2}a_{\cE}^* \ddr \abrac{x,y}- \tfrac{1}{2}a_{\cF}^* \nu(x, y),
\end{align*}
where  the expressions $\cour{x,y}$ and $\ddr \abrac{x,y}$ are defined by extending $x$ and $y$ to sections of $\cE$ in a neighbourhood of $V$ and then restricting the results.  Since $\cF\subset f^*\cE$ is isotropic, the expression $\ddr \abrac{x,y}$ automatically lies in $\cN_V$, which implies that $\cour{x, y}\in \cF$, so that $\cF$ is a Dirac structure in $\cE$.  This gives the equivalence of $\TDiracconn(U,V)$ and $\Lag_2(U,V)$ at the level of objects.

\subsubsection{1-morphisms}
The one-morphisms in $\Lag_2(U,V)$ are given by homotopy commutative diagrams of the form
\[
\xymatrix{
[V/\cM] \ar[d] \ar^{\mu}[r] & [V/\cM'] \ar[d] \ar@{=>}^{h}[dl]\\
[U/\cL] \ar_{g}[r] & [U/\cL']
}
\]
where the vertical morphisms are two-shifted Lagrangians defined by twisted Dirac pairs.

An $L_\infty$ morphism $\cM \rightarrow \cM'$ consists of a chain morphism $\mu\colon \cM \rightarrow \cM'$ and a linear map $\widetilde{\mu}\colon \wedge^2 \cF \rightarrow \cN^\vee_Y$ satisfying \eqref{eq:2symp1mor1}--\eqref{eq:2symp1mor3} with $g$ replaced by $\mu$.  Meanwhile by \autoref{thm:shift2}, the 1-morphism $\UL \to [U/\cL']$ is determined by a morphism of twisted Courant algebroids, consisting of a bundle map $g  \colon \cE \rightarrow \cE'$ and a three-form $H\in \Omega^3(U)$.  Finally, we have the data of a homotopy between the composites
\[
\rbrac{\xymatrix{
\cM \ar[r]^{\mu} & \cM' \ar[r]& f^*\cL'
}} \sim_h \rbrac{{\xymatrix{\cM \ar[r] & f^*\cL \ar[r]^{g} & f^*\cL'}}},
\]
compatible with the form data.   It is determined by a bundle map $h\colon \cF\rightarrow f^*\cT^\vee_X$ that satisfies the following variants of \eqref{eq:2symp2mor3}--\eqref{eq:2symp2mor5}:
\begin{align}
\mu x - gx &= -\tfrac{1}{2} a^\vee h \label{eq:dirac1mor1} \\
\mu u - u &= -\tfrac{1}{2} h a^\vee u \label{eq:dirac1mor2} \\
h[x, y] - [hx,y] - [x,hy] &= \psi'(\mu x,\mu y) -\nu'(\mu x, \mu y) \nonumber \\& \ \ \ + \widetilde{\mu}(x, y) - \tg(x,y) \nonumber \\
&\ \ \ - \psi(x,y) + \nu(x, y) \label{eq:dirac1mor3}
\end{align}

Since we assume the Lagrangian structure to be strict, we get $f^* H = 0$ and $f^* h x = 0$. In particular, $h\in \cF^\vee\otimes \cN^\vee_V$.  

Consider the case when $h=0$. Then equation \eqref{eq:dirac1mor1} holds if and only if $g$ intertwines the subbundle $\cF \subset f^*\cE$ and $\cF'\subset f^*\cE'$, and equation \eqref{eq:dirac1mor2} means  that $\mu$ in degree $-1$ is the identity. Equation \eqref{eq:dirac1mor3} uniquely determines $\tilde{\mu}(x, y)$.  It is not difficult to check that then the equations \eqref{eq:2symp1mor1}--\eqref{eq:2symp1mor3} for $\mu$ to be an $L_\infty$ morphism are automatically satisfied.  Thus we see that there is an inclusion $\TDiracconn(U,V) \to \Lag_2(U,V)$ at the level of one-morphisms.

\subsubsection{2-morphisms}

2-morphisms in $\Lag_2(U,V)$ are given by diagrams of the form
\[
\xymatrix@R+20pt{
[V/\cM] \ar[d] \rtwocell^{\mu_1}_{\mu_2}{\chi} & [V/\cM'] \ar[d] \ar@{=>}^{h}[dl] \\
[U/\cL] \rtwocell^{g_1}_{g_2} & [U/\cL']
}
\]
together with a homotopy $\mu_1\stackrel{\chi}\sim \mu_2$ represented by morphism $\chi\colon \cF \rightarrow \cN^\vee_V$, and  a 2-morphism $(g_1,H_1) \sim (g_2,H_2)$ represented by a two-form $B \in \Omega^{2}(U)$ such that $f^*B = 0$.

The homotopies satisfies the following equations:
\begin{align}
\mu_2 x - \mu_1 x &= -\tfrac{1}{2} a^\vee \chi x \label{eq:dirac2mor1} \\
\mu_2 u - \mu_1 u  &= -\tfrac{1}{2} \chi a^\vee x \label{eq:dirac2mor2} \\
\widetilde{\mu}_2(x, y) - \widetilde{\mu}_1(x, y) &= \chi [x, y] - [\chi x, \mu_1y] - [\mu_1x, \chi y] \label{eq:dirac2mor3} \\
h_2x - h_1 x &= \chi x  \label{eq:dirac2mor4}
\end{align}

Equation \eqref{eq:dirac2mor2} is automatic in view of equations \eqref{eq:dirac2mor4} and \eqref{eq:dirac1mor2}. Similarly, equation \eqref{eq:dirac2mor3} follows from equations \eqref{eq:dirac2mor4} and \eqref{eq:dirac1mor3}.  Thus \eqref{eq:dirac2mor4} implies that every 1-morphism is equivalent to one for which $h=0$.  Restricting to such 1-morphisms, we see that $\chi=0$, and hence $\mu_2=\mu_1$.

In this way we have produced a 2-functor $\TDiracconn(U,V)\rightarrow \Symp_2(U,V)$ which is clearly fully faithful and essentially surjective by \autoref{lm:2lagstrict}.

\subsection{Untwisted Dirac structures}
\label{sec:shift2-corr}
The more classical  notion of a Dirac structure concerns Lagrangian subbundles in an untwisted Courant algebroid. More specifically, we have a 1-groupoid $\Dirac(X,Y)$ as follows:
\begin{itemize}
\item[] \textbf{Objects} of $\Dirac(X, Y)$ are given by Dirac pairs $(\cE,\cF)$ for which $\cE$ is an (untwisted) Courant algebroid.

\item[] \textbf{1-morphisms} $(\cE, \cF)\rightarrow (\cE', \cF')$ in $\Dirac(X,Y)$ are given by Courant algebroid isomorphisms $g\colon \cE\rightarrow \cE'$  that preserve the Dirac structures.
\end{itemize}

The symplectic counterpart is as follows.  Consider a two-shifted isotropic structure $X \to \XL$ representing an untwisted Courant algebroid $\cE$. Then a Dirac structure in $\cE$ is the data of a Lagrangian $\YM \to \XL$, forming a commutative square
\[
\xymatrix{
Y \ar[r] \ar[d]& X \ar[d] \\
\YM \ar[r] & \XL,
}
\]
together with a homotopy of the isotropic structures on $Y \to \XL$ induced by the two compositions. Such a diagram is an example of an \defterm{isotropic correspondence}.  We denote by $\LagIso_2(Y, X)$ the $\infty$-groupoid parametrizing such structures and their homotopies.  An argument identical to the proof of \autoref{prop:shift2-iso} then gives the
\begin{prop}
Let $Y \subset X$ be a closed submanifold.  Then the $\infty$-groupoid $\LagIso_2(Y, X)$ is equivalent to the 1-groupoid $\Dirac(X,Y)$ of untwisted Dirac pairs.
\end{prop}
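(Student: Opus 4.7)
The strategy follows the template of \autoref{prop:shift2-iso}, combined with the classification in \autoref{thm:shift2lagrangian}. As in that proof, it is enough to work on an affine manifold $U$ with a closed submanifold $V \subset U$. By construction, an object of $\LagIso_2(V,U)$ is a two-shifted symplectic algebroid $(\cL,\omega)$ on $U$, an isotropic structure on $U \to \UL$, and a Lagrangian $\VM \to \UL$ equipped with a homotopy between the two induced isotropic structures on $V \to \UL$. This fits naturally as the homotopy fibre of the map $\Lag_2(U,V) \to \SympIso_2(U)$ given by precomposing with $V \to U$.

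First I would argue, exactly as in \autoref{prop:shift2-iso}, that this map is in fact a strict fibration. The relevant projection of closed-form complexes, namely $\Omcl^{\geq 2}(\UL) \to \Omega^{\geq 2}(U)$, is surjective by construction, and similarly the projections controlling 1- and 2-morphisms (governed by the three-forms $H$ and two-forms $B$ on $U$) are surjective onto their restrictions. Hence the homotopy fibre agrees with the strict fibre, and $\LagIso_2(V,U)$ is identified with the full subgroupoid of $\Lag_2(U,V)$ in which every piece of differential-form data living purely on $U$ is set to zero: at the level of objects we impose $K = 0$, at the level of 1-morphisms $H = 0$, and at the level of 2-morphisms $B = 0$.

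Applying \autoref{thm:shift2lagrangian}, which identifies $\Lag_2(U,V)$ with $\TDirac(U,V)$, this vanishing condition on forms on $U$ translates precisely into the condition that the ambient twisted Courant algebroid $\cE$ and all of its higher morphisms have vanishing twisting cocycles on $U$, that is, $\cE$ is an untwisted Courant algebroid. The relative twisting cocycle, a priori living in $\Omega^{\geq 2}_{U,V}$, automatically dies as well since its image in $\Omega^{\geq 2}_U$ is already zero and the relative complex embeds there. The result is exactly the 1-groupoid $\Dirac(U,V)$ of untwisted Dirac pairs, with no room left for nontrivial 2-morphisms. Globalizing over affine covers gives the equivalence $\LagIso_2(Y,X) \cong \Dirac(X,Y)$.

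The only genuinely delicate step is verifying surjectivity of the comparison map at the level of morphisms and two-morphisms, so that the strict and homotopy fibres coincide; this is where one must be careful that truncating all the $U$-form data is compatible with the equations \eqref{eq:2symp1mor1}--\eqref{eq:2symp1mor7} and \eqref{eq:dirac1mor1}--\eqref{eq:dirac1mor3} inherited from \autoref{thm:shift2lagrangian}. Once this is in hand, the rest of the argument is essentially formal and parallels \autoref{prop:shift2-iso} line by line.
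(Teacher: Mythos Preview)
Your approach is essentially identical to the paper's, which simply remarks that the argument is identical to that of \autoref{prop:shift2-iso} and leaves the details to the reader. One small correction: there is no natural map $\Lag_2(U,V) \to \SympIso_2(U)$ as you describe (a Lagrangian supported on $V$ does not produce an isotropic structure on all of $U$); what you want is that $\LagIso_2(V,U)$ is the homotopy fibre product of $\Lag_2(U,V)$ and $\SympIso_2(U)$ over $\Symp_2(U)$, together with the compatibility homotopy on $V$. This does not affect the substance of your argument, since the relevant surjectivity of form-data projections, passage to strict fibres, and identification with $\Dirac(U,V)$ via \autoref{thm:shift2lagrangian} all go through exactly as you say.
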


Consider now the special case in which $\cL = \cT_X$ and $\cM = \cT_Y$, so that we have an isotropic correspondence 
\[
\xymatrix{
Y \ar[d]\ar[r] & X \ar[d] \\
\YT \ar[r] & \XT
}
\]
On the one hand, all form data on $\YT$ and $\XT$ are equivalent to zero, so that the isotropic structure on $X$ is a 1-cocycle in $\Omega^{\ge 2}_X$, and the isotropic correspondence is given by a trivialization of its pullback to $Y$.

On the other hand, the isotropic structure $X \to \XT$ corresponds to an exact Courant algebroid $\cE$ on $X$ as in \autoref{sec:shift2-iso}.  It pulls back to an isotropic structure $Y\to \YT$, giving an exact Courant algebroid $f^!\cE$ by the usual restriction  formula
\[
f^!\cE = a^{-1}(\cT_Y) / \cN^\vee_Y.
\]
The isotropic correspondence then results in a trivialization 
\[
f^!\cE \cong \cT_Y \oplus \coT_Y
\]
Under this isomorphism, the Lagrangian $\YT \to \XT$ corresponds to the Dirac structure $\cF \subset f^*\cE$ given by the preimage of $\cT_Y$ along the projection $a^{-1}(\cT_Y) \to f^!\cE$.  It sits in an exact sequence
\begin{align*}
\xymatrix{
0 \ar[r] & \cN^\vee_Y \ar[r] & \cF \ar[r] & \cT_Y \ar[r] & 0
}
\end{align*}
and is referred to in \cite[Section 6]{Gualtieri2011} as the \defterm{generalized tangent bundle} of the Courant trivialization.  We arrive at the following Lagrangian analogue of \autoref{cor:exact-courant}:

\begin{cor}\label{cor:gen-tan}
Let $\cE$ be an exact Courant algebroid on $X$.  The set of generalized tangent bundles for $Y \subset X$ is in bijection with the set of Courant trivializations of $f^!\cE$.  These sets are torsors for the space $\coH^0(Y,\Omcl^2)$ of global closed two-forms on $Y$. 
\end{cor}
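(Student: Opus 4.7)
The plan is to deduce the corollary from the formalism already built up: exact Courant algebroids correspond to isotropic quotients $X \to \XT$ (\autoref{cor:exact-courant}), Dirac pairs correspond to Lagrangians in two-shifted symplectic algebroids (\autoref{thm:shift2lagrangian}), and the two get linked by the isotropic correspondence picture described just before the corollary.

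First I would unpack both sides of the asserted bijection symplectically. On the ``trivialization'' side, a Courant trivialization of $f^!\cE$ is, by \autoref{cor:exact-courant} applied to $Y$, the same thing as a coboundary for the pullback \v Severa class $[f^!\cE] \in \coH^1(Y,\Omega^{\ge 2}_Y)$, and hence exactly the data of an isotropic correspondence
\[
\xymatrix{
Y \ar[r] \ar[d] & X \ar[d] \\
\YT \ar[r] & \XT
}
\]
filling in the induced null-homotopy of $[f^!\cE]$. On the ``generalized tangent bundle'' side, such an isotropic correspondence is, by the proposition in \autoref{sec:shift2-corr}, the same as a Dirac pair $(\cE,\cF)$ on $(X,Y)$ with $\cE$ fixed, and the condition that the map of quotients is $\YT \to \XT$ (rather than the more general $\YM \to \XT$) forces $\cM\cong \cT_Y$, which in turn forces $\cF$ to fit into the exact sequence $0 \to \cN^\vee_Y \to \cF \to \cT_Y \to 0$ — this is precisely the definition of a generalized tangent bundle. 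The explicit mutually inverse assignments are the ones stated before the corollary: a trivialization $f^!\cE \cong \cT_Y \oplus \coT_Y$ yields $\cF$ as the preimage of $\cT_Y$ under $a^{-1}(\cT_Y)\to f^!\cE \to \cT_Y$; conversely, $\cF$ provides a splitting $\cT_Y \to f^!\cE$ (well defined modulo $\cN^\vee_Y$ since $\cN^\vee_Y\subset \cF$) which, combined with the tautological $\coT_Y\hookrightarrow f^!\cE$, rebuilds the trivialization.

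For the torsor structure, I would observe that both sides transparently carry an action of $\coH^0(Y,\Omcl^2)$: a closed two-form $B$ on $Y$ acts on trivializations by the usual $B$-field transformation $(v,\xi)\mapsto (v,\xi+\iota_v B)$, and on generalized tangent bundles by shearing $\cF$ inside $f^*\cE$ via the same formula through the chosen splitting. Under the bijection established above, these two actions obviously intertwine. It therefore suffices to verify that one of them is free and transitive, and the cleanest place to do so is on the symplectic side: the space of null-homotopies of a fixed cocycle in $\Z^1(Y,\Omega^{\ge 2}_Y)$ is, in the Dold--Kan picture of \autoref{def:space-of-closed}, a torsor under $\Z^0(Y,\Omega^{\ge 2}_Y)=\coH^0(Y,\Omcl^2)$. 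The main (and really only) technical point to double check is that this abstract torsor action on closed $(2,1)$-forms agrees term by term with the $B$-field action on trivializations; this is a direct unwinding of the normalized complex $\Omcl^2(\XT)$ of \autoref{thm:reduced} together with the explicit rectification of the Lagrangian performed in \autoref{lm:2lagstrict}, and presents no conceptual obstacle.
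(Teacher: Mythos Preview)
Your proposal is correct and follows essentially the same route as the paper: both sides of the bijection are identified with isotropic correspondences $Y \to \YT \to \XT$ (via \autoref{cor:exact-courant} on the one hand and the proposition in \autoref{sec:shift2-corr} on the other), and the torsor claim drops out from the description of such correspondences as null-homotopies of a fixed cocycle. The paper in fact offers no separate proof---the corollary is stated as an immediate consequence of the discussion just above it---so your write-up simply supplies the details (the explicit inverse and the matching of the $B$-field action) that the paper leaves implicit.
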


\section{One-shifted symplectic forms}
\label{sec:shift1}

\subsection{Symplectic structures and exact Dirac pairs}
\label{sec:shift1-symp}
The classification of one-shifted symplectic algebroids is a simple consequence of the results so far.  Indeed, we recall that a one-shifted symplectic structure on $\XL$ is the same thing as a Lagrangian structure on
\[
\XL \to \XT,
\]
where $\XT$ carries the zero two-shifted symplectic structure.  To see why,  observe that an isotropic structure is now, by definition, a primitive for the trivial closed $(2,2)$-form on $\XL$, i.e.~a closed $(2,1)$-form.  This isotropic structure is Lagrangian if and only if the induced sequence
\[
\xymatrix{
\cT_\XL \ar[r] & \cT_\XT \ar[r] & \cT^\vee_\XL[2] \ar[r] & \cT_\XL[1]
}
\]
is an exact triangle.  Since the tangent complex $\cT_\XT$ is contractible, this is equivalent to requiring that the 1-shifted two-form on $\XL$ gives a quasi-isomorphism  $\cT_\XL \cong \cT^\vee_\XL[1]$, i.e.~that it is symplectic. 

Now a Lagrangian structure on $\XL \to \XT$ automatically induces an isotropic structure on $X \to \XT$ by pullback.  By \autoref{cor:exact-courant}, this gives an exact Courant algebroid $\cE$ on $X$, and the Lagrangian structure embeds $\cL$ as a Dirac structure in $\cE$.  We call such pairs $(\cE,\cL)$ \defterm{exact Dirac pairs}; they form a full subgroupoid $\ExDirac(X) \subset \Dirac(X,X)$. We therefore recover the infinitesimal characterization~\cite{Bursztyn2004,Xu2004} of quasi-symplectic groupoids:
\begin{thm}\label{thm:shift1}
For any manifold $X$, there is a canonical equivalence,
\[
\Symp_1(X) \cong \ExDirac(X)
\]
between the $\infty$-groupoid of 1-shifted symplectic algebroids on $X$, and the 1-groupoid of exact Dirac pairs.
\end{thm}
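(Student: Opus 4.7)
The strategy is to deduce the theorem by combining \autoref{thm:shift2lagrangian} with \autoref{cor:exact-courant}, exploiting the reformulation of a 1-shifted symplectic structure on $\XL$ as a Lagrangian structure on the projection $\pi\colon \XL \to \XT$, where the target carries the zero 2-shifted symplectic form.

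I would first verify this reformulation. The target $\XT$ has contractible tangent complex (the anchor of $\cT_X$ is the identity), so the zero 2-form is trivially nondegenerate and hence 2-shifted symplectic. An isotropic structure on $\pi$ is, by definition, a null-homotopy of $\pi^*0=0$, equivalently a closed $(2,1)$-form $\omega$ on $\XL$. The Lagrangian condition requires that the normal complex $\cN_\pi = \Cone(\cT_\XL \to \pi^*\cT_\XT)$ be quasi-isomorphic to $\coT_\XL[2]$; since $\pi^*\cT_\XT$ is acyclic we have $\cN_\pi \simeq \cT_\XL[1]$, so the condition reduces to $\cT_\XL \simeq \coT_\XL[1]$ via $\omega$, which is precisely the nondegeneracy of a 1-shifted 2-form. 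The same cancellation applies to morphisms and higher homotopies, so $\Symp_1(X)$ is equivalent to the full subspace of $\Lag_2(X,X)$ consisting of Lagrangians whose 2-shifted target is $\XT$ with zero form.

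Next I would pull the Lagrangian structure back along the quotient $q\colon X \to \XL$ to obtain an isotropic structure on the composite $\pi q\colon X \to \XT$. This is a null-homotopy of the zero 2-cocycle on $X$, hence a cocycle in $\C^1(X,\Omega^{\ge 2}_X)$; by \autoref{cor:exact-courant} it corresponds canonically to an exact Courant algebroid $\cE$ on $X$. Applying \autoref{thm:shift2lagrangian} with $Y=X$ over this specific 2-shifted target, the remaining Lagrangian data on $\pi$ corresponds to an involutive Lagrangian subbundle $\cL \subset \cE$, i.e., a Dirac structure. This produces the exact Dirac pair $(\cE,\cL)$. The construction is evidently reversible: starting from $(\cE,\cL) \in \ExDirac(X)$, the exact Courant algebroid $\cE$ recovers the isotropic structure on $X \to \XT$, and the Dirac inclusion $\cL \subset \cE$ provides the Lagrangian factorization $\XL \to \XT$, from which the 1-shifted symplectic form on $\XL$ is reassembled.

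The main bookkeeping task — and the only real obstacle — is to verify that this correspondence produces an equivalence of $\infty$-groupoids, matching the 1-categorical nature of $\ExDirac(X)$. The key observation is that for $Y=X$ the relative complex $\Omega^{\ge 2}_{X,X}$ vanishes; consequently the 2-morphisms in $\TDirac(X,X)$ (which are 2-forms in $\Omega^{\ge 2}_{X,X}$) are trivial, and both the 3-form data $H$ in 1-morphisms and the 4-form data $K$ in objects must vanish, forcing the Courant algebroids to be untwisted. Hence $\TDirac(X,X)$ collapses to the 1-groupoid $\Dirac(X,X)$, within which the exact Dirac pairs form the full subgroupoid $\ExDirac(X)$. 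Assembling the identifications above yields the claimed equivalence $\Symp_1(X) \simeq \ExDirac(X)$.
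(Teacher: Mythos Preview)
Your proposal is correct and follows essentially the same approach as the paper: both reinterpret a 1-shifted symplectic structure on $\XL$ as a Lagrangian structure on $\XL \to \XT$ with the zero 2-shifted form, pull back to obtain an isotropic structure on $X \to \XT$ corresponding (via \autoref{cor:exact-courant}) to an exact Courant algebroid, and then identify the remaining Lagrangian data with a Dirac structure. Your treatment is somewhat more systematic than the paper's---you explicitly invoke \autoref{thm:shift2lagrangian} with $Y=X$ and use the vanishing of $\Omega^{\ge 2}_{X,X}$ to explain the collapse to a 1-groupoid---but the underlying argument is the same.
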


\begin{remark}\label{rmk:affine-exact-dirac}
On an affine manifold $U$, we can choose a splitting of the exact sequence
\[
\xymatrix{
0 \ar[r] & \cT_U^\vee \ar[r] & \cE \ar[r] & \cT_U \ar[r] & 0,
}
\]
giving an identification of $\cE$ with the standard Courant algebroid $ \cT_U \oplus \cT_U^\vee$, but with the bracket twisted by a closed three-form $H \in \Omcl^3(U)$ as  in \autoref{ex:h-twist-std}.   In this presentation, the $(2,1)$-form underlying the shifted symplectic structure on $\XL$ is completely determined by the following quasi-isomorphism of complexes on $X$, associated with the embedding of $\cL$ as a Lagrangian subbundle in $\cE \cong \cT_U \oplus \cT_U^\vee$:
\[
\xymatrix{\pi^*\cT_\UL \ar[d]& 
\cL \ar[r]\ar[d] &  \cT_U \ar[d] \\
\pi^*\cT^\vee_\UL[1] & \cT^\vee_U\ar[r] & \cL^\vee
}
\]
Meanwhile, the closure data is provided by the three-form $H$ that modifies the standard Courant bracket.  In the non-affine setting, $\cE$ need not split, so such a description of the symplectic structure may only exist locally. \qed
\end{remark}
There is also a symplectic interpretation of the ``tensor product'' of suitably transverse exact Dirac structures,  introduced in \cite{Alekseev2009,Gualtieri2011}.  Indeed, let $(\cE_1,\cL_1)$ and $(\cE_2,\cL_2)$ be exact Dirac pairs.  Then we have a pair of two-shifted Lagrangian morphisms $[X/\cL_i] \to [X/\cT_X]$.  Let us denote by $\overline{[X/\cL_2]}$ the Lagrangian in which the signs of all form data are reversed.  Then we can define a new Lie algebroid $\cL_1 \boxtimes \cL_2$ by taking the fibre product
\[
 [X/\cL_1] \underset{[X/\cT_X]}{\times} \overline{[X/\cL_2]} \cong [X/(\cL_1 \boxtimes \cL_2)].
 \]
Since the fibre product of two $n$-shifted Lagrangians is always $(n-1)$-shifted symplectic~\cite{Pantev2013}, we see that $[X/(\cL_1 \boxtimes \cL_2)]$ carries a canonical one-shifted symplectic structure, and hence we obtain a new exact Dirac pair $(\cE_1\boxtimes \cE_2,\cL_1\boxtimes \cL_2)$.

To see that this one-shifted symplectic structure coincides with the construction in \cite{Alekseev2009,Gualtieri2011}, we note that the underlying isotropic structures are additive under $\boxtimes$ and hence the \v{S}evera class of the exact Courant algebroid is additive under this operation, which determined $\cE_1\boxtimes \cE_2$ up to isomorphism; see  \cite[p.~88]{Gualtieri2011} for the functorial construction.  Considering the corresponding fibre product on tangent complexes, we immediately see that 
\[
\cL_1 \boxtimes \cL_2 \cong \cL_1 \underset{\cT_X}{\times}\cL_2 \cong \rbrac{\xymatrix{
\cL_1 \oplus \cL_2 \ar[r] & \cT_X
}}
\]
where $\cL_1 \oplus \cL_2$ sits in degree zero.  When the anchor maps are transverse, this complex is quasi-isomorphic to its zeroth cohomology, which is the usual fibre product of vector bundles, giving the desired formula for the Dirac structure.   When the anchors are not transverse, one could still make sense of the tensor product as some derived version of a Dirac structure, which would have nontrivial cohomology in degree one.

To see that $\boxtimes$ is monoidal, notice that the associativity of Lagrangian fibre products implies the associativity relation $(\cL_1 \boxtimes \cL_2) \boxtimes \cL_3 \cong \cL_1 \boxtimes(\cL_1 \boxtimes \cL_2)$
on exact Dirac structures.  The  monoidal unit is given by the identity Lagrangian $\XT \to \XT$, which  corresponds to the Dirac structure $\cT_X \subset \cT_X \oplus \cT^\vee_X$.

\subsection{Lagrangians with support and Courant trivializations}
\label{sec:shift1-lagrangian}

Now suppose that $X$ is a manifold, and $(\cE,\cL)$ is an exact Dirac pair, defining a one-shifted symplectic structure on $\XL$.  We now classify Lagrangians of the form $\YM \to \XL$, where $f \colon Y \to X$ is a closed submanifold.

We view the symplectic structure on $\XL$ as a Lagrangian $\XL \to \XT$ as above, and consider the commutative diagram
\[
\xymatrix{
Y \ar[d]\ar[r] & X \ar[d] \\
\YM \ar[r]\ar[d] & \XL \ar[d]\\
\YT \ar[r] & \XT
}
\]
The outermost rectangle is then an isotropic correspondence of the type considered in \autoref{sec:shift2-corr}, and so we obtain a Courant trivialization on $Y$.  Let $\cF \subset f^*\cE$ be the corresponding generalized tangent bundle.  Then the tangent complexes of the bottom square give the diagram
\[
\xymatrix{
(\cM\to \cT_Y) \ar[r] \ar[d] & (f^*\cL \to f^*\cT_X) \ar[d] \\
(\cN^\vee_Y \to \cF \to \cT_Y) \ar[r] & (f^*\coT_X \to  f^*\cE \to f^*\cT_X)
}
\]
and we immediately see that $\cM$ maps to the intersection $f^*\cL \cap \cF \subset f^*\cE$. 

\begin{defn}
Let $(\cE,\cL)$ be an exact Dirac pair.  A Courant trivialization $f^!\cE \cong \cT_Y\oplus \cT_Y^\vee$ is \defterm{compatible with $\cL$} if the subsheaf $\cF \cap \cL \subset f^*\cE$ is actually an embedded subbundle.
\end{defn}

We claim that the Lagrangian condition is equivalent to the induced map $\cM \to f^*\cL \cap \cF$ being an isomorphism, so that the data of a Lagrangian is the same as the data of a compatible Courant trivialization.  To see this, consider the intersection
\[
W = \XL \underset{\XT}{\times} \YT
\]
equipped with its 1-shifted symplectic structure, and  observe that $\YM \to \XL$ is Lagrangian if and only if $\YM \to W$ is.  Using the Lagrangian condition on $\XL \to \XT$, we may identify the tangent complex of the fibre product with the homotopy kernel of the morphism $\cT_\YT \to \coT_\XL[2]$ defined by the symplectic form.  We thus obtain an equivalence
\begin{align*}
\pi^*\cT_W &\cong \rbrac{ \xymatrix{
\cN^\vee_Y \ar[r] & \cF \oplus f^*\coT_X \ar[r] & f^*\cL^\vee \oplus \cT_Y }} \\
&\cong \rbrac{\xymatrix{\cF \oplus \coT_Y \ar[r] & f^*\cL^\vee \oplus \cT_Y}}
\end{align*}
where the component $\cF \to f^*\cL^\vee$ is induced by the nondegenerate pairing on $\cE$.  The Lagrangian condition is now equivalent to having an exact sequence
\[
(\xymatrix{\cM\ar[r] & \cT_Y}) \to \rbrac{\xymatrix{\cF \oplus \coT_Y \ar[r] & f^*\cL^\vee \oplus \cT_Y}} \to \rbrac{\xymatrix{\coT_Y \ar[r] & \cM^\vee}},
\]
which in turn is equivalent to the exactness of the complex
\[
\xymatrix{
0 \ar[r] & \cM \ar[r] & \cF \ar[r] & f^*\cL^\vee \ar[r] & \cM^\vee \ar[r] & 0
}
\]
Using the fact that $\cF$ and $\cL$ are Lagrangian subbundles, this is equivalent to having $\cM = f^*\cL \cap \cF$, as claimed.  We have therefore arrived at the following
\begin{thm}
The $\infty$-groupoid $\Lag_1(X,Y)$ consisting of one-shifted Lagrangian morphisms $\YM \to \XL$ is equivalent to the 1-groupoid exact Dirac pairs equipped with a compatible Courant trivialization along $Y$.
\end{thm}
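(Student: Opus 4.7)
The plan is to promote the sketch preceding the theorem into a formal equivalence of $\infty$-groupoids, proceeding in three steps. The basic idea is to reduce one-shifted Lagrangian structures into $\XL$ to a two-shifted Lagrangian structure problem that we have already classified, via the identification of the symplectic structure on $\XL$ with a Lagrangian morphism $\XL \to \XT$ established in \autoref{sec:shift1-symp}.

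First, I construct the functor from $\Lag_1(X,Y)$ to the 1-groupoid of compatibly trivialized exact Dirac pairs. Given a Lagrangian $\YM \to \XL$, compose with $\XL \to \XT$ and form the commutative square
\[
\xymatrix{
Y \ar[d]\ar[r] & X \ar[d] \\
\YT \ar[r] & \XT.
}
\]
This outer rectangle is an isotropic correspondence of the type treated in \autoref{sec:shift2-corr}, and by \autoref{cor:gen-tan} it produces a compatible Courant trivialization $f^!\cE \cong \cT_Y \oplus \coT_Y$ with generalized tangent bundle $\cF \subset f^*\cE$.

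Second, I show that the Lagrangian condition forces $\cM$ to coincide with the intersection $f^*\cL\cap \cF$ as an embedded subbundle. Following the discussion above the statement, form the intersection $W = \XL \times_{\XT} \YT$, which is one-shifted symplectic, and note that $\YM \to \XL$ is Lagrangian iff $\YM \to W$ is. Using the Lagrangian condition on $\XL \to \XT$, identify $\pi^* \cT_W$ with $(\cF\oplus\coT_Y \to f^*\cL^\vee \oplus \cT_Y)$, so that the Lagrangian condition for $\YM \to W$ becomes the exactness of
\[
\xymatrix{
0 \ar[r] & \cM \ar[r] & \cF \ar[r] & f^*\cL^\vee \ar[r] & \cM^\vee \ar[r] & 0.
}
\]
Since $\cF$ and $\cL$ are Lagrangian in $\cE$, unwinding the pairing identifies this complex with the sequence computing $f^*\cL \cap \cF$, giving $\cM = f^*\cL \cap \cF$ and, a posteriori, the fact that this intersection is an embedded subbundle, i.e.~the Courant trivialization is compatible with $\cL$.

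For the third step, I construct the inverse functor: given a compatible Courant trivialization with Dirac structure $\cF$, define $\cM = f^*\cL \cap \cF$, inheriting a Lie algebroid structure from $\cL$ and an anchor $\cM \to \cT_Y$ (using $\cF \cap f^*\cL \subset \cF \to \cT_Y$), and produce an explicit Lagrangian structure on $\YM \to \XL$ by combining the null homotopies coming from the compatible Courant trivialization. I then verify that the two functors are mutually inverse. The main obstacle is the verification that $\Lag_1(X,Y)$ is in fact 1-truncated: in principle it could be an $\infty$-groupoid with higher cells, but since the symplectic structure on $\XL$ gives a quasi-isomorphism $\cT_\XL \cong \coT_\XL[1]$ and the Lagrangian condition rigidifies the normal complex, a bookkeeping argument analogous to the two-shifted case in \autoref{sec:2shift-2mor} shows that all higher homotopies are determined by the lower-level data, matching the fact that the right-hand side is an ordinary 1-groupoid.
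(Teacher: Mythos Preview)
Your proposal is correct and follows essentially the same route as the paper: the paper's argument is precisely the discussion preceding the theorem statement, and your Steps 1 and 2 reproduce it verbatim (the outer isotropic correspondence giving the Courant trivialization, the fibre product $W = \XL \times_{\XT} \YT$, and the exactness criterion yielding $\cM = f^*\cL \cap \cF$). Your Step 3 merely makes explicit the inverse construction and the 1-truncation check that the paper leaves implicit in the phrase ``the data of a Lagrangian is the same as the data of a compatible Courant trivialization.''
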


\begin{ex}
Suppose that $\cE = \cT_X \oplus \cT_X^\vee$ is the standard Courant algebroid and $\cL = \cA \oplus \cA^\perp \subset \cE$ where $\cA\subset \cT_X$ is the involutive subbundle determined by a regular foliation of $X$ and $\cA^\perp \subset \cT^\vee_X$ is its annihilator.  If $Y \subset X$ is a union of leaves of the foliation, then the canonical trivialization $f^!\cE \cong \cT_Y\oplus \cT_Y^\vee$ is compatible with $\cL$.  Indeed, we have the generalized tangent bundle
\[
\cF = \cT_Y\oplus \cN_Y^\vee
\]
so that
\[
\cF \cap f^*\cL = f^*\cA \oplus \cN_Y^\vee
\]
which is evidently a subbundle.\qed
\end{ex}

\subsection{Isotropic and Lagrangian quotients}
\label{sec:Poisson}
Consider now the case of a  Lagrangian of the form
\[
X \to \XL,
\]
so that the Lie algebroid on the source is trivial.  Thus an isotropic structure is simply a Courant trivialization of $\cE$ on $X$, embedding  $\cL$ as a Dirac structure in the standard Courant algebroid $\cT_X\oplus \cT^\vee_X$.  According to \autoref{ex:lag-quotient}, the Lagrangian condition then forces the projection $\cL \to \coT_X$ to be an isomorphism; as is well known, such Dirac structure are precisely the graphs of Poisson bivectors.
\begin{cor}
\label{cor:poisson}
Let $X$ be a manifold.  The $\infty$-groupoid $\SympIso_1(X)$ of one-shifted isotropic quotients $X \to \XL$ is equivalent to the discrete set of Dirac structures in the standard Courant algebroid  $\cT_X\oplus \cT_X^\vee$.   Under this equivalence, the subgroupoid of Lagrangians is identified with the set of Poisson structures on $X$.
\end{cor}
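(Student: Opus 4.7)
The strategy is to reduce the corollary to the classifications already established for $\Symp_1(X)$ and for isotropic correspondences. By \autoref{thm:shift1}, a one-shifted symplectic structure on $\XL$ is equivalent to an exact Dirac pair $(\cE,\cL)$, so it remains to understand what additional data the isotropic (resp.\ Lagrangian) structure on the quotient $\pi\colon X \to \XL$ provides.

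Following the perspective of \autoref{sec:shift1-symp}, I will view the one-shifted symplectic form on $\XL$ as a two-shifted Lagrangian morphism $\XL \to \XT$. An isotropic structure on $\pi\colon X \to \XL$ then combines with this Lagrangian to form an isotropic correspondence of the form
\[
\xymatrix{
X \ar@{=}[r] \ar[d] & X \ar[d] \\
\XL \ar[r] & \XT
}
\]
which is precisely the setup studied in \autoref{sec:shift2-corr}. Applying \autoref{cor:gen-tan} to the identity inclusion $Y = X$ (so that $f^!\cE = \cE$), such a correspondence is equivalent to a Courant trivialization $\cE \cong \cT_X \oplus \coT_X$ of the underlying exact Courant algebroid. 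Pushing the Lagrangian subbundle $\cL \subset \cE$ across this trivialization exhibits it as a Dirac structure in the standard Courant algebroid, and conversely every such Dirac structure arises this way with the identity trivialization. To confirm discreteness of $\SympIso_1(X)$, I would observe that any $2$-morphism in the enveloping $\infty$-groupoid decomposes, exactly as in the proofs of \autoref{thm:shift2} and \autoref{thm:shift2lagrangian}, into a two-form $B \in \Omega^2(X)$, but compatibility with the preferred Courant trivialization forces $B = 0$, collapsing all higher data.

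For the Lagrangian refinement I would appeal to \autoref{ex:lag-quotient}, which at shift $q = 1$ forces the quasi-isomorphism $\cL \cong \coT_X[q-1] = \coT_X$. Under the trivialization $\cE \cong \cT_X \oplus \coT_X$, this translates to the condition that the Dirac subbundle $\cL$ project isomorphically onto the $\coT_X$ summand, i.e.\ be transverse to $\cT_X$ inside $\cT_X \oplus \coT_X$. Lagrangian subbundles transverse to $\cT_X$ are precisely the graphs of bivectors $\pi\colon \coT_X \to \cT_X$, and the involutivity of such a graph under the Courant--Dorfman bracket is the classical reformulation of the Jacobi identity $[\pi,\pi] = 0$, namely the condition that $\pi$ be a Poisson structure. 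The main technical point in the argument is keeping track of the higher coherences between the isotropic data and the Courant trivialization; but by the rigidity established in the preceding sections both the trivialization and the Dirac subbundle admit only identity automorphisms compatible with all of the structure, so the mapping spaces collapse as needed.
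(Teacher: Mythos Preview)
Your argument is correct and follows essentially the same route as the paper: both reinterpret the isotropic structure on $X\to\XL$ as an isotropic correspondence over $\XT$, invoke the discussion of \autoref{sec:shift2-corr} (in particular the identification with Courant trivializations of $\cE$), and then use \autoref{ex:lag-quotient} to recognize the Lagrangian case as the graph of a Poisson bivector. The paper leaves the discreteness claim implicit, whereas you spell out why the residual $B\in\Omega^2(X)$ is forced to vanish by compatibility with the trivialization; this is a helpful elaboration but not a departure in method.
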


The equivalence between Poisson structures and one-shifted Lagrangians is closely related to the correspondence between Poisson structures and symplectic groupoids~\cite{Karaseev1986,Weinstein1987}.  Indeed, the quotient map $X \to \XL$ gives an atlas for the stack $\XL$, via the formal groupoid
\[
G  = X \underset{\XL}{\times} X \rightrightarrows X
\]
integrating the Lie algebroid $\cL$.  Being the fibre product of Lagrangians in a one-shifted symplectic stack, $G$ carries a canonical zero-shifted symplectic structure, and this form is multiplicative by construction. 

\subsection{Lagrangian correspondences}
We close with the following interpretation of the relation between coisotropic submanifolds and Lagrangian subgroupoids~\cite{Cattaneo2004}:
\begin{prop}
For a closed submanifold $f\colon Y \to  X$, the $\infty$-groupoid of one-shifted Lagrangian correspondences
\[
\xymatrix{
Y \ar[r] \ar[d] & X \ar[d] \\
\YM \ar[r] & \XL
}
\]
is equivalent to the discrete set of Poisson structures on $X$ for which $Y$ is a coisotropic submanifold.
\end{prop}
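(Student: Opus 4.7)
The plan is to reduce the proposition to the classifications already established in \autoref{cor:poisson} and \autoref{sec:shift1-lagrangian}, and then unpack the compatibility condition as the coisotropy condition. First I would apply \autoref{cor:poisson} to the right column $X \to \XL$: the Lagrangian condition endows $X$ with a Poisson structure $\pi$, and identifies $\cL$ with the cotangent Lie algebroid $\coT_X$ and the associated exact Courant algebroid with the standard one $\cE \cong \cT_X \oplus \coT_X$. Under this identification, the Dirac structure defined by $\cL$ is the graph $\{\pi^\sharp(\alpha) + \alpha : \alpha \in \coT_X\} \subset \cE$.

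Next I would apply the classification of 1-shifted Lagrangians from \autoref{sec:shift1-lagrangian} to the bottom morphism $\YM \to \XL$. This identifies the data with a Courant trivialization of $f^!\cE$ compatible with $\cL$ in the sense that $\cF \cap f^*\cL$ is an embedded subbundle. Since $\cE$ is the standard Courant algebroid, $f^!\cE$ is canonically standard on $Y$, so the Courant trivialization is the identity and the generalized tangent bundle is $\cF = \cT_Y \oplus \cN_Y^\vee \subset f^*\cE$. A direct computation then gives
\[
\cF \cap f^*\cL = \{\pi^\sharp(\alpha) + \alpha : \alpha \in \cN_Y^\vee, \ \pi^\sharp(\alpha) \in \cT_Y\},
\]
which is a subbundle precisely when $\pi^\sharp(\cN_Y^\vee) \subset \cT_Y$, i.e., when $Y$ is a coisotropic submanifold of $(X,\pi)$. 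When this holds, the intersection is isomorphic to $\cN_Y^\vee$ with anchor $\pi^\sharp|_{\cN_Y^\vee}$, giving back the standard Lie algebroid $\cM$ of coisotropic reduction, and the left column $Y \to \YM$ is the corresponding quotient map.

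Finally, I would verify that this yields an equivalence of $\infty$-groupoids with a discrete set. The square commutes homotopically by construction of $\cM = f^*\cL \cap \cF$, and all data in the correspondence---the Poisson $\pi$, the Dirac structure, the Courant trivialization, and the Lie algebroid $\cM$---is determined by $\pi$ together with the coisotropy property. Since Poisson structures form a discrete set and coisotropy is a property rather than extra data, there are no nontrivial higher automorphisms, so the $\infty$-groupoid collapses to the claimed discrete set. The main obstacle is bookkeeping: one must track how the Lagrangian condition on the right column interacts with the Lagrangian condition on the bottom morphism through the chain of identifications, but once the graph form of the Poisson Dirac structure is written explicitly, the coisotropy condition emerges immediately from the subbundle requirement.
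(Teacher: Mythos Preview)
Your decomposition strategy is natural, but there is a genuine gap.  You correctly identify that the right column $X\to\XL$ gives a Poisson structure via \autoref{cor:poisson}, and that the bottom row $\YM\to\XL$ is governed by the classification in \autoref{sec:shift1-lagrangian}.  However, that classification says a one-shifted Lagrangian $\YM\to\XL$ is equivalent to a \emph{compatible Courant trivialization} of $f^!\cE$ along $Y$, and these trivializations form a torsor over $\coH^0(Y,\Omcl^2)$ by \autoref{cor:gen-tan}.  You assert without argument that this trivialization is the canonical one, with generalized tangent bundle $\cF=\cT_Y\oplus\cN_Y^\vee$.  Nothing in items (1) and (2) of your decomposition forces this; it is precisely the remaining correspondence datum, the Lagrangian condition on $Y\to\YM\times_\XL X$, that pins down the trivialization, and you never invoke it.  For a non-canonical trivialization, the intersection $\cF\cap f^*\cL$ would not be computed by your formula, and the subbundle condition would not translate to coisotropy.

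Relatedly, even after fixing the trivialization you must still check that $Y\to\YM\times_\XL X$ is Lagrangian in the $0$-shifted fibre product; this is a separate condition from the Lagrangian conditions on the two legs, and it is the content of the paper's proof.  The paper sidesteps both issues by directly computing the tangent complex of $\YM\times_\XL X$ and reading off the Lagrangian condition on $Y$ as the exactness of
\[
0\to\cM\to\cT_Y\oplus f^*\cL\to\coT_Y\to\cM^\vee\to 0,
\]
which, using $\cL\cong\coT_X$, immediately yields $\cM\cong\cN_Y^\vee$ with anchor $\pi^\sharp$ and hence coisotropy.  Your route can be completed, but you would need to argue that the homotopy in the square identifies the two isotropic structures on $Y$ pulled back via the two paths, thereby forcing the Courant trivialization to agree with the one induced from $X$, and then verify the remaining Lagrangian condition; at that point the argument is no longer shorter than the paper's direct computation.
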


\begin{proof}
Computing the tangent complex of the fibre product, we immediately see that the Lagrangian condition on the map 
\[
Y \to \YM \underset{\XL}{\times} X
\] is equivalent to the exactness of the sequence
\begin{align*}
\cT_Y \to \rbrac{\cM \to \cT_Y \oplus f^*\cT_X \oplus \cL \to f^*\cT_X} \to \coT_Y.
\end{align*}
The middle complex is quasi-isomorphic to $(\cM \to \cT_Y \oplus \cL)$, and using $\cL \cong \cT_X^\vee$ we get an isomorphism $\cM \cong \cN^\vee_Y$, with anchor map induced by the Poisson bivector.  Thus $Y$ is coisotropic.
\end{proof}

\bibliographystyle{hyperamsplain}
\bibliography{symplectic-algebroids.bib}

\providecommand{\bysame}{\leavevmode\hbox to3em{\hrulefill}\thinspace}
\providecommand{\MR}{\relax\ifhmode\unskip\space\fi MR }
\providecommand{\MRhref}[2]{%
  \href{http://www.ams.org/mathscinet-getitem?mr=#1}{#2}
}
\providecommand{\href}[2]{#2}
\begin{thebibliography}{10}

\bibitem{Abad2012}
C.~A. Abad and M.~Crainic, \emph{Representations up to homotopy of {L}ie
  algebroids}, \href{http://dx.doi.org/10.1515/CRELLE.2011.095}{J. Reine Angew.
  Math. \textbf{663} (2012)}, 91--126.

\bibitem{Alekseev2009}
A.~Alekseev, H.~Bursztyn, and E.~Meinrenken, \emph{Pure spinors on {L}ie
  groups}, Ast\'erisque (2009), no.~327, 131--199 (2010).

\bibitem{Alekseev}
A.~Alekseev and P.~Xu, \emph{Derived brackets and {C}ourant algebroids},
  \url{http://www.math.psu.edu/ping/anton-final.pdf}.

\bibitem{Alexandrov1997}
M.~Alexandrov, A.~Schwarz, O.~Zaboronsky, and M.~Kontsevich, \emph{The geometry
  of the master equation and topological quantum field theory},
  \href{http://dx.doi.org/10.1142/S0217751X97001031}{Internat. J. Modern Phys.
  A \textbf{12} (1997)}, no.~7, 1405--1429.

\bibitem{Anderson2014}
L.~B. Anderson, J.~Gray, and E.~Sharpe, \emph{Algebroids, heterotic moduli
  spaces and the Strominger system},
  \href{http://dx.doi.org/10.1007/JHEP07(2014)037}{Journal of High Energy
  Physics \textbf{2014} (2014)}, no.~7, 1--40.

\bibitem{AriasAbad2011}
C.~Arias~Abad and M.~Crainic, \emph{The {W}eil algebra and the {V}an {E}st
  isomorphism}, \href{http://dx.doi.org/10.5802/aif.2633}{Ann. Inst. Fourier
  (Grenoble) \textbf{61} (2011)}, no.~3, 927--970.

\bibitem{Baraglia2015}
D.~Baraglia and P.~Hekmati, \emph{Transitive {C}ourant algebroids, string
  structures and {$T$}-duality},
  \href{http://dx.doi.org/10.4310/ATMP.2015.v19.n3.a3}{Adv. Theor. Math. Phys.
  \textbf{19} (2015)}, no.~3, 613--672.

\bibitem{Bonavolonta2013}
G.~Bonavolont{\`a} and N.~Poncin, \emph{On the category of {L}ie
  {$n$}-algebroids}, \href{http://dx.doi.org/10.1016/j.geomphys.2013.05.004}{J.
  Geom. Phys. \textbf{73} (2013)}, 70--90.

\bibitem{Bressler2007}
P.~Bressler, \emph{The first {P}ontryagin class},
  \href{http://dx.doi.org/10.1112/S0010437X07002710}{Compos. Math. \textbf{143}
  (2007)}, no.~5, 1127--1163.

\bibitem{Bressler2005}
P.~Bressler and A.~Chervov, \emph{Courant algebroids},
  \href{http://dx.doi.org/10.1007/s10958-005-0251-7}{J. Math. Sci. (N. Y.)
  \textbf{128} (2005)}, no.~4, 3030--3053.

\bibitem{Brown1965}
R.~Brown, \emph{The twisted {E}ilenberg-{Z}ilber theorem}, Simposio di
  {T}opologia ({M}essina, 1964), Edizioni Oderisi, Gubbio, 1965, pp.~33--37.

\bibitem{Bursztyn2004}
H.~Bursztyn, M.~Crainic, A.~Weinstein, and C.~Zhu, \emph{Integration of twisted
  {D}irac brackets},
  \href{http://dx.doi.org/10.1215/S0012-7094-04-12335-8}{Duke Math. J.
  \textbf{123} (2004)}, no.~3, 549--607.

\bibitem{Bursztyn2009}
H.~Bursztyn, D.~Iglesias~Ponte, and P.~{\v{S}}evera, \emph{Courant morphisms
  and moment maps}, \href{http://dx.doi.org/10.4310/MRL.2009.v16.n2.a2}{Math.
  Res. Lett. \textbf{16} (2009)}, no.~2, 215--232.

\bibitem{Cattaneo2004}
A.~S. Cattaneo, \emph{On the integration of {P}oisson manifolds, {L}ie
  algebroids, and coisotropic submanifolds},
  \href{http://dx.doi.org/10.1023/B:MATH.0000027690.76935.f3}{Lett. Math. Phys.
  \textbf{67} (2004)}, no.~1, 33--48.

\bibitem{Courant1988}
T.~Courant and A.~Weinstein, \emph{Beyond {P}oisson structures}, Action
  hamiltoniennes de groupes. {T}roisi\`eme th\'eor\`eme de {L}ie ({L}yon,
  1986), Travaux en Cours, vol.~27, Hermann, Paris, 1988, pp.~39--49.

\bibitem{Courant1990}
T.~J. Courant, \emph{Dirac manifolds},
  \href{http://dx.doi.org/10.2307/2001258}{Trans. Amer. Math. Soc. \textbf{319}
  (1990)}, no.~2, 631--661.

\bibitem{Crainic2004a}
M.~Crainic, \emph{On the perturbation lemma, and deformations},
  \href{http://arxiv.org/abs/math/0403266}{{\tt arXiv:math/0403266}}.

\bibitem{Crainic2004}
M.~Crainic and R.~L. Fernandes, \emph{Integrability of {P}oisson brackets}, J.
  Differential Geom. \textbf{66} (2004), no.~1, 71--137.

\bibitem{Crainic2008}
M.~Crainic and I.~Moerdijk, \emph{Deformations of {L}ie brackets: cohomological
  aspects}, \href{http://dx.doi.org/10.4171/JEMS/139}{J. Eur. Math. Soc. (JEMS)
  \textbf{10} (2008)}, no.~4, 1037--1059.

\bibitem{delaOssa2016}
X.~de~la Ossa, E.~Hardy, and E.~E. Svanes, \emph{The heterotic superpotential
  and moduli}, \href{http://dx.doi.org/10.1007/JHEP01(2016)049}{Journal of High
  Energy Physics \textbf{2016} (2016)}, no.~1, 1--33.

\bibitem{Dolgushev2015}
V.~Dolgushev, A.~Hoffnung, and C.~Rogers, \emph{What do homotopy algebras
  form?}, \href{http://dx.doi.org/10.1016/j.aim.2015.01.014}{Adv. in Math.
  \textbf{274} (2015)}, 562--605.

\bibitem{Dolgushev2014}
V.~Dolgushev and T.~Willwacher,
  \href{http://dx.doi.org/10.1007/978-3-319-09804-3_6}{\emph{The deformation
  complex is a homotopy invariant of a homotopy algebra}}, Developments and
  retrospectives in {L}ie theory, Dev. Math., vol.~38, Springer, Cham, 2014,
  pp.~137--158.

\bibitem{Dorfman1987}
I.~Y. Dorfman, \emph{Dirac structures of integrable evolution equations},
  \href{http://dx.doi.org/10.1016/0375-9601(87)90201-5}{Phys. Lett. A
  \textbf{125} (1987)}, no.~5, 240--246.

\bibitem{Garcia-Fernandez2015}
M.~Garcia-Fernandez, R.~Rubio, and C.~Tipler, \emph{Infinitesimal moduli for
  the {S}trominger system and {K}illing spinors in generalized geometry},
  \href{http://arxiv.org/abs/1503.07562}{{\tt arXiv:1503.07562}}.

\bibitem{Getzler2009}
E.~Getzler, \emph{Lie theory for nilpotent {$L_\infty$}-algebras},
  \href{http://dx.doi.org/10.4007/annals.2009.170.271}{Ann. of Math. (2)
  \textbf{170} (2009)}, no.~1, 271--301.

\bibitem{Gualtieri2011}
M.~Gualtieri, \emph{Generalized complex geometry},
  \href{http://dx.doi.org/10.4007/annals.2011.174.1.3}{Ann. of Math. (2)
  \textbf{174} (2011)}, no.~1, 75--123.

\bibitem{Gugenheim1972}
V.~K. A.~M. Gugenheim, \emph{On the chain-complex of a fibration}, Illinois J.
  Math. \textbf{16} (1972), 398--414.

\bibitem{Hansen2010}
M.~Hansen and T.~Strobl, \emph{First class constrained systems and twisting of
  {C}ourant algebroids by a closed 4-form}, Fundamental interactions, World
  Sci. Publ., Hackensack, NJ, 2010, pp.~115--144.

\bibitem{Huybrechts1997}
D.~Huybrechts and M.~Lehn, \emph{The geometry of moduli spaces of sheaves},
  Aspects of Mathematics, E31, Friedr. Vieweg \& Sohn, Braunschweig, 1997.

\bibitem{Karaseev1986}
M.~V. Karas{\"e}v, \emph{Analogues of objects of the theory of {L}ie groups for
  nonlinear {P}oisson brackets}, Izv. Akad. Nauk SSSR Ser. Mat. \textbf{50}
  (1986), no.~3, 508--538, 638.

\bibitem{Klimcik2002}
C.~Klim{\v{c}}{\'{\i}}k and T.~Strobl, \emph{W{ZW}-{P}oisson manifolds},
  \href{http://dx.doi.org/10.1016/S0393-0440(02)00027-X}{J. Geom. Phys.
  \textbf{43} (2002)}, no.~4, 341--344.

\bibitem{Kotov2005}
A.~Kotov, P.~Schaller, and T.~Strobl, \emph{Dirac sigma models},
  \href{http://dx.doi.org/10.1007/s00220-005-1416-4}{Comm. Math. Phys.
  \textbf{260} (2005)}, no.~2, 455--480.

\bibitem{Kowalzig2011}
N.~Kowalzig and H.~Posthuma, \emph{The cyclic theory of Hopf algebroids}, J.
  Noncommut. Geom. \textbf{5} (2011), no.~3, 423--476.

\bibitem{Li-Bland2015}
D.~Li-Bland and E.~Meinrenken, \emph{On the van {E}st homomorphism for {L}ie
  groupoids}, \href{http://dx.doi.org/10.4171/LEM/61-1/2-5}{Enseign. Math.
  \textbf{61} (2015)}, no.~1-2, 93--137.

\bibitem{Liu2016a}
J.~{Liu} and Y.~{Sheng}, \emph{{QP-structures of degree 3 and LWX
  2-algebroids}}, \href{http://arxiv.org/abs/1602.01127}{{\tt
  arXiv:1602.01127}}.

\bibitem{Liu1997}
Z.-J. Liu, A.~Weinstein, and P.~Xu, \emph{Manin triples for {L}ie
  bialgebroids}, J. Differential Geom. \textbf{45} (1997), no.~3, 547--574.

\bibitem{Loday2012}
J.-L. Loday and B.~Vallette,
  \href{http://dx.doi.org/10.1007/978-3-642-30362-3}{\emph{Algebraic operads}},
  Grundlehren der Mathematischen Wissenschaften [Fundamental Principles of
  Mathematical Sciences], vol. 346, Springer, Heidelberg, 2012.

\bibitem{Mehta2006}
R.~A. Mehta, \emph{Supergroupoids, double structures, and equivariant
  cohomology}, ProQuest LLC, Ann Arbor, MI, 2006. Thesis (Ph.D.)--University of
  California, Berkeley.

\bibitem{Nuiten2017}
J.~Nuiten, \emph{Koszul duality for Lie algebroids}, 2017. In preparation.

\bibitem{Pantev2013}
T.~Pantev, B.~To{\"e}n, M.~Vaqui{\'e}, and G.~Vezzosi, \emph{Shifted symplectic
  structures}, \href{http://dx.doi.org/10.1007/s10240-013-0054-1}{Publ. Math.
  Inst. Hautes \'Etudes Sci. \textbf{117} (2013)}, 271--328.

\bibitem{Roytenberg2002}
D.~Roytenberg, \href{http://dx.doi.org/10.1090/conm/315/05479}{\emph{On the
  structure of graded symplectic supermanifolds and {C}ourant algebroids}},
  Quantization, {P}oisson brackets and beyond ({M}anchester, 2001), Contemp.
  Math., vol. 315, Amer. Math. Soc., Providence, RI, 2002, pp.~169--185.

\bibitem{Roytenberg1998}
D.~Roytenberg and A.~Weinstein, \emph{Courant algebroids and strongly homotopy
  {L}ie algebras}, \href{http://dx.doi.org/10.1023/A:1007452512084}{Lett. Math.
  Phys. \textbf{46} (1998)}, no.~1, 81--93.

\bibitem{Severa1998--2000}
P.~{\v{S}}evera, \emph{Letters to A.~Weinstein}, 1998--2000,
  \url{http://sophia.dtp.fmph.uniba.sk/~severa/letters/}.

\bibitem{Severa2005}
\bysame, \emph{Some title containing the words ``homotopy'' and ``symplectic'',
  e.g. this one}, Travaux math\'ematiques. {F}asc. {XVI}, Trav. Math., XVI,
  Univ. Luxemb., Luxembourg, 2005, pp.~121--137.

\bibitem{Severa2001}
P.~{\v{S}}evera and A.~Weinstein, \emph{Poisson geometry with a 3-form
  background}, \href{http://dx.doi.org/10.1143/PTPS.144.145}{Progr. Theoret.
  Phys. Suppl. (2001)}, no.~144, 145--154. Noncommutative geometry and string
  theory (Yokohama, 2001).

\bibitem{Sheng2011}
Y.~Sheng and C.~Zhu, \emph{Higher Extensions of {L}ie Algebroids},
  \href{http://dx.doi.org/10.1142/S0219199716500346}{Commun. Contemp. Math
  (2011)}.

\bibitem{Shoikhet2007}
B.~Shoikhet, \emph{An explicit construction of the {Q}uillen homotopical
  category of dg {L}ie algebras}, \href{http://arxiv.org/abs/0706.1333}{{\tt
  arXiv:0706.1333}}.

\bibitem{Weinstein1971}
A.~Weinstein, \emph{Symplectic manifolds and their {L}agrangian submanifolds},
  Advances in Math. \textbf{6} (1971), 329--346 (1971).

\bibitem{Weinstein1981}
\bysame, \emph{Neighborhood classification of isotropic embeddings}, J.
  Differential Geom. \textbf{16} (1981), no.~1, 125--128.

\bibitem{Weinstein1987}
\bysame, \emph{Symplectic groupoids and {P}oisson manifolds},
  \href{http://dx.doi.org/10.1090/S0273-0979-1987-15473-5}{Bull. Amer. Math.
  Soc. (N.S.) \textbf{16} (1987)}, no.~1, 101--104.

\bibitem{Xu2004}
P.~Xu, \emph{Momentum maps and {M}orita equivalence}, J. Differential Geom.
  \textbf{67} (2004), no.~2, 289--333.

\end{thebibliography}

\medskip

\noindent{\small

\noindent {\sc Mathematical Institute and Jesus College, University of Oxford, UK}

\noindent \href{mailto:brent.pym@maths.ox.ac.uk}{brent.pym@maths.ox.ac.uk}

\noindent {\sc Max-Planck-Institut F\"{u}r Mathematik, Bonn, Germany}

\noindent \href{mailto:psafronov@mpim-bonn.mpg.de}{psafronov@mpim-bonn.mpg.de}}

\end{document}